\newcommand{\R}[1]{\ensuremath{\mathbb{R}^{#1}}}
\newcommand{\dd}{\,{\rm d}}
\numberwithin{equation}{section}
\theoremstyle{plain}
\newtheorem{theorem}{Theorem}[section]
\newtheorem{proposition}[theorem]{Proposition}
\newtheorem{lemma}[theorem]{Lemma}
\newtheorem{rem}[theorem]{Remark}
\theoremstyle{definition}
\newtheorem{definition}[theorem]{Definition}
\newtheorem{example}[theorem]{Example}
\begin{document}

\title[Computation of Young-Measure Solutions]{Numerical Approximation of Young Measure Solutions to\\ Parabolic Systems of Forward-Backward Type}

\author{Miles Caddick}
\author{Endre S\"uli}

\address{Mathematical Institute, University of Oxford, Andrew Wiles Bldg., Woodstock Road, Oxford OX2 6GG, UK}
\email{miles.caddick@maths.ox.ac.uk}
\address{Mathematical Institute, University of Oxford, Andrew Wiles Bldg., Woodstock Road, Oxford OX2 6GG, UK}
\email{endre.suli@maths.ox.ac.uk}

\begin{abstract}
This paper is concerned with the proof of existence and numerical approximation of large-data global-in-time Young measure solutions to initial-boundary-value problems for multidimensional nonlinear pa\-rabolic systems of forward-backward type of the form $\partial_t u - \mbox{div}(a(Du)) + Bu = F$, where $B \in \mathbb{R}^{m \times m}$, $Bv \cdot v \geq 0$ for all $v \in \mathbb{R}^m$, $F$ is an $m$-component vector-function defined on a bounded open Lipschitz domain $\Omega \subset \mathbb{R}^n$, and $a$ is a locally Lipschitz mapping of the form $a(A)=K(A)A$, where $K\,:\, \mathbb{R}^{m \times n} \rightarrow \mathbb{R}$. The function $a$ may have a nonstandard growth rate, in the sense that it is permitted to have unequal lower and upper growth rates. Furthermore, $a$ is not assumed to be monotone, nor is it assumed to be the gradient of a potential. Problems of this type arise in mathematical models of the atmospheric boundary layer and fall beyond the scope of monotone operator theory. We develop a numerical algorithm for the approximate solution of problems in this class, and we prove the convergence of the algorithm to a Young measure solution of the system under consideration.
~\\
\begin{center}
\textit{\normalsize{Dedicated to Academician Professor Gradimir Milovanovi\'{c} on the occasion of his 70th birthday.}}
\end{center}
\end{abstract}

\maketitle

\section{Introduction}\label{sec:1}
The paper is concerned with the numerical approximation of Young measure solutions to initial-boundary-value problems for
nonlinear multidimensional parabolic systems of forward-backward type, where the existence of a weak solution cannot in
general be guaranteed, because the nonlinearity in the equation is neither monotone, nor globally Lipschitz, nor indeed is it the gradient of a potential. Nonlinear parabolic systems of this type arise in certain mathematical models of the atmospheric boundary layer and, to date, there have been no attempts at the rigorous mathematical analysis of numerical methods for their approximate solution.

The systems of nonlinear parabolic partial differential equations considered here are of the following form:
\begin{equation}\label{modelprob1}
\partial_tu-\mathrm{div}\left(a(Du)\right)+Bu=F,
\end{equation}
with $u:[0,T]\times \overline{\Omega}\rightarrow\R{m}$, where $\Omega\subset\R{n}$ is a bounded open Lipschitz domain. Here $Du$ denotes the spatial gradient of $u$, with $(Du)_{ik}= \partial u_i/\partial x_k$, $i=1,\dots,m$, $k=1,\dots,n$. Throughout we will take real numbers $p_i$, for $i=1,\ldots,m$, that satisfy
\[p_i>\max\bigg\{1, \frac{2n}{n+2}\bigg\}.\]
We define
\[p:=\min_{i=1,\ldots,m} p_i \qquad \mbox{and}\qquad q:=\max_{i=1,\ldots,m} p_i,\]
and require that
\begin{align}\label{e:qp1}
q-p<1.
\end{align}
We shall also assume that
\begin{equation}\label{modelrhs}
F\in L^{p'}\!(Q_T;\R{m})\cap L^2(Q_T;\R{m}),
\end{equation}
with $\frac{1}{p} + \frac{1}{p'}=1$, where $Q_T:=(0,T) \times \Omega$, and that
\begin{equation}\label{b-positive}
Bv\cdot v\geq 0\qquad \forall\, v \in \mathbb{R}^m,
\end{equation}
where $B\in \R{m\times m}$ is a constant matrix. All of our results extend directly to the case where $B \in L^\infty(Q_T;\R{m\times m})$, without assuming \eqref{b-positive}, however a more general supposition of this kind
would contribute no new insight, so for the sake of clarity of the exposition we shall continue to assume that $B \in \R{m\times m}$ is a constant matrix satisfying \eqref{b-positive}. The system \eqref{modelprob1} is supplemented with the initial condition
\begin{equation}\label{modelinitdata}
u(0,\cdot)=u_0(\cdot)\in L^2(\Omega;\R{m}),
\end{equation}
and the homogeneous Dirichlet boundary condition
\begin{equation}\label{modelboundarydata}
u|_{(0,T]\times\partial\Omega}=0.
\end{equation}
The function $a:\mathbb{R}^{m \times n} \rightarrow \mathbb{R}^{m \times n}$ is assumed to be a locally Lipschitz mapping of the form
\begin{equation}\label{modelstructure}
a(A)=K(A)A,
\end{equation}
where $K:\R{m \times n}\rightarrow\R{}$ is continuous, and there exist positive constants $c_0$ and $c_1$, with $c_0 \leq  c_1$, such that
\begin{equation}\label{modelgrowth}c_0\sum_{i=1}^m(\mu_i^2+|A_i|^2)^\frac{p_i-2}{2}\leq K(A)
\leq c_1\sum_{i=1}^m(\mu_i^2+|A_i|^2)^\frac{p_i-2}{2} \qquad \forall\, A \in \mathbb{R}^{m \times n},
\end{equation}
with $p_i>\max\{1,\frac{2n}{n+2}\}$ for all $i=1,\ldots,m$, and $\mu_i$ is a constant defined by
\begin{equation}\label{modelconstants}
\mu_i
\begin{cases}
\in\R{} \quad \text{if}\ p_i\geq2,\\ \neq0 \quad \,\text{if}\ 1<p_i<2,
\end{cases}
\end{equation}
and where $|A_i|$ denotes the Euclidean norm of the $i$-th row of the matrix $A$. More generally, $|\cdot|$ will signify the absolute value of a real number, the Euclidean norm of a vector, or the Frobenius norm of a matrix: its precise meaning will be clear from the context. We emphasize that we are making no assumptions here about the monotonicity of the mapping $a$, nor will we assume that $a$ is the gradient of a certain $C^1$-function. The results of the paper can be easily adapted to systems in which the form of $a$ is slightly more general: for example, instead of \eqref{modelstructure}--\eqref{modelconstants} one can assume that $|a(x)| \leq c_0(1+|x|^{q-1})$ and $a(x)\cdot x \geq c_1|x|^p$, with $p>\max\{1,\frac{2n}{n+2}\}$ and $p\leq q < p+1$.

In order to motivate the weak structural assumptions on the function $a$ stated above we shall now present some examples.
\begin{example}
In \cite{BeCu} the authors consider a system of equations modelling the behaviour of the atmospheric boundary layer, which served as our original motivation for the present study. In that model, $n=1$, $m=3$, $\Omega = (0,1)$, and the function $K$, referred to as a \textit{stability function},  is defined as follows:
\begin{align}\label{stabfunc}
K(\partial_xu)=\left\{
\begin{array}{cc}
\frac{(|\partial_xu_1|^2+|\partial_xu_2|^2)^\frac{3}{2}}{|\partial_xu_1|^2+|\partial_xu_2|^2+\partial_xu_3}& \text{if}\ \partial_xu_3>0,\\
~&~\\
\sqrt{|\partial_xu_1|^2+|\partial_xu_2|^2-\partial_xu_3}& \text{if}\ \partial_xu_3\leq0.
\end{array}
\right.
\end{align}
The resulting system of PDEs for the vector of unknowns $u(t,x) = (u_1(t,x),u_2(t,x),u_3(t,x))^{\rm T}$ is
\begin{align*}
\partial_tu-\partial_x(K(\partial_xu)\partial_xu)+ Bu&=F\ \ \ \ \ \ \text{for}\ (t,x)\in(0,T]\times(0,1),\\
u(t,1)&=f(t)\ \ \ \text{for}\ t\in(0,T],\\
u_1(t,0)&=g_1(t)\ \ \text{for}\ t\in(0,T],\\
u_2(t,0)&=g_2(t)\ \ \text{for}\ t\in(0,T],\\
\partial_xu_3(t,0)&=h(t)\ \ \ \text{for}\ t\in(0,T],
\end{align*}
subject to the initial condition
\begin{equation*}
u(0,x)=u_0(x)\ \ \ \ \text{for}\ x\in(0,1).
\end{equation*}
Here, $u:[0,T]\times [0,1]\rightarrow\R{3}$ is a vector with components $u=(u_1,u_2,u_3)^{\rm T}$, $B$ is the skew-symmetric matrix
\[B=\begin{pmatrix}0& 1&0\\ -1&0&0 \\ 0&0&0 \end{pmatrix}\]
and $F$ is a forcing term given by the constant vector
\begin{equation*}
F(t,x)=\begin{pmatrix}-V\\U\\0\end{pmatrix}.
\end{equation*}
What is immediately seen is that, given the above definition of the function $K$, the product $K(\partial_xu)\partial_xu$ lacks a coercivity estimate in a Sobolev space on the range $\partial_xu_3>0$, because the mapping $\xi \in \R{3} \mapsto a(\xi) := K(\xi)\xi \in \R{3}$ is not coercive in the range $\xi_3>0$; indeed, for $\xi=(1,0,\xi_3)^{\rm T}$ with $\xi_3>0$, $a(\xi)\cdot \xi/|\xi| \rightarrow 1$ as $\xi_3 \rightarrow +\infty$. Another difficulty arises from the fact that the function $\xi \mapsto a(\xi):=K(\xi)\xi$ is not monotone on $\R{3}$, i.e., it is not true that $(a(\xi)-a(\eta)):(\xi - \eta) \geq 0$ for all $\xi, \eta \in \R{3}$, and therefore monotone operator theory cannot be applied in this setting.
To see that the mapping $\xi \mapsto a(\xi):=K(\xi)\xi$ is not monotone on $\R{3}$, take $\xi=(0.035,0,-0.01)^{\rm T}$ and $\eta=(0.05,0,0)^{\rm T}$; then, computing $(a(\xi)-a(\eta))\cdot(\xi-\eta)$ will a give a negative number. Similarly taking $\xi=(-0.2, -0.1, 0.2)^{\rm T}$ and $\eta=(-0.1,0,0.5)^{\rm T}$ will result in a negative number. Finally to show that the vector field $a$ is not a potential, consider the contour integral of $a$ around the circle $C$ parametrized by $r(\theta)=(1+\cos(\theta),0,\sin(\theta))$, $\theta \in [0,2\pi)$. One can show that this is nonzero, and so $a$ is not a conservative vector-field, and thus it is not a potential.
\end{example}
\begin{example}
For $n=1$ and $m=3$ consider the function $K$ given by $\xi \in \R{3}\mapsto K(\xi)=\sqrt{\xi_1^2+\xi_2^2+|\xi_3|}$. Then we are in the regime $\partial_x u_3 \leq 0$ from \eqref{stabfunc}. One has the existence of constants $c_0$ and $c_1$ such that
\[
c_0(|\xi_1|+|\xi_2|+\sqrt{|\xi_3|})\leq K(\xi)\leq c_1(|\xi_1|+|\xi_2|+\sqrt{|\xi_3|})
\]
through bounding and using the equivalence of norms on \R{3}. This function $K$ is therefore covered by our assumptions.
We are unfortunately unable to say anything about the region from \eqref{stabfunc} where $\partial_xu_3>0$, as our analysis does not cover the case $p=1$.
\end{example}

As will transpire from the discussion that follows, under the stated hypotheses on the function $a$ we are unable to prove
the existence of a weak solution to the problem \eqref{modelprob1}--\eqref{modelconstants} under consideration and have to weaken the notion of solution to be able to show its existence. We shall therefore, instead, show the existence of a Young measure solution in the sense of the next definition, and will then consider the numerical approximation of such Young measure solutions.

\begin{definition}\label{measuresol}
We say that a pair $(u,\nu)$, where $u$ is a function with
\[ u \in L^\infty(0,T;L^2(\Omega;\R{m}))\cap L^p(0,T;W^{1,p}_0(\Omega;\R{m}))\quad\mbox{with}\quad Du_i \in L^{p_i}(0,T;L^{p_i}(\Omega;\R{m\times n})),\quad i=1,\dots,m,\]
and
\[\partial_tu\in L^{\hat{q}'}\!(0,T;W^{-1,\hat{q}'}\!(\Omega;\R{m})),\]
and $(t,x) \in Q_T \mapsto \nu_{t,x}$ is an $\R{m \times n}$-valued Young measure, such that
\[ \langle\nu,a\rangle \in L^{\hat{q}'}\!(Q_T;\R{m\times n}),\]
is called a \textit{Young measure solution} to the problem described by \eqref{modelprob1}, \eqref{modelinitdata} and \eqref{modelboundarydata} if
\[\int_0^T\langle\partial_tu,\phi\rangle_{(W_0^{1,\hat{q}}(\Omega;\R{m}),W^{-1,\hat{q}'}\!(\Omega;\R{m}))}+\int_\Omega\langle \nu_{t,x},a\rangle : D\phi\ + Bu\cdot\phi{\dd}x{\dd}t=\int_0^T\int_\Omega F\cdot \phi{\dd}x{\dd}t\]
for all $\phi\in L^{\hat{q}}(0,T;W_0^{1,\hat{q}}(\Omega;\R{m}))$, where $\hat{q}:=\max\{q,2\}$, $\hat{q}'=\frac{\hat{q}}{\hat{q}-1}$, and the initial condition is satisfied in the sense that $u(0,x)=u_0(x)$ for almost every $x\in\Omega$.
\end{definition}

Measure-valued solutions and Young measure solutions to parabolic partial differential equations have been studied by a number of authors. Frehse \& Specovius-Neugebauer, in \cite{FreSpec}, showed the existence of H\"older continuous Young measure solutions to two-dimensional nonmonotone quasilinear problems exhibiting quadratic growth (that is, $p=q=n=2$), under the assumption that their nonlinearity $a$ was expressible as
\begin{equation}\label{potential}
a=\nabla\Phi
\end{equation}
for some $C^1$ function $\Phi$. Our terminology \textit{Young measure solution} follows that of Frehse \& Specovius-Neugebauer in \cite{FreSpec}; in particular, we have consciously avoided referring to the solutions considered here as \textit{measure-valued} solutions, as the function $u$, whose existence we prove, is still a real-valued function with spatial Sobolev regularity; however, instead of being a standard weak solution, the function $u$ satisfies the PDE in the sense of gradient Young-measures. Young measure solutions to forward-backward parabolic problems were studied by Demoulini in \cite{Demo}: under the assumptions that $m=1$, $p=q=2$ and that \eqref{potential} is satisfied,  a sequence of approximating solutions was constructed by means of minimizing an integral, which involves the convexification of $\Phi$. Using this construction, a family of Young measures was generated displaying an independence property, from which it was possible to deduce a uniqueness result regarding the function $u$ (not regarding the family of Young measures). From this, a weak-strong uniqueness result follows if the classical solution, which is assumed to exist, satisfies additional constraints on its gradient. More recently, in Thanh \cite{Thanh} and some of the references contained therein, in particular \cite{ThanhSmaTes}, possible regularizing terms are discussed for forward-backward parabolic equations, the inclusion of which allows for solutions with higher regularity to be obtained than those considered here; Thanh also obtained results concerning the long-time behaviour of certain Young measure solutions, and (in the case of an equation in one space dimension) the support of the Young measure appearing in the Young measure solutions. However, once again, that work makes the assumption that \eqref{potential} holds, and assumes more restrictive growth conditions on $a$ than those considered here. Even more recently, Kim \& Yan in \cite{KimYan} have expanded their earlier work in \cite{KimYan2015} to show the existence of infinitely many Lipschitz solutions (and under further assumptions, the existence of a unique classical solution) for particular classes of forward-backward parabolic equations. Although their work allows one to consider stronger notions of solution than the Young measure solutions that we shall discuss here, many of their structural assumptions regarding the nonlinearities are incompatible with our setting. Furthermore, their focus is solely on scalar equations, whereas we are concerned with systems here.

From the computational point of view, Carstensen \& Roub\'i\v{c}ek \cite{CarsRou} considered the numerical approximation of Young measures arising in certain minimization problems in the calculus of variations. As we lack this variational structure ourselves (in this work we are not assuming \eqref{potential}), neither of the methods in \cite{Demo} or \cite{CarsRou} will be suitable in our setting. In \cite{BFG} the authors consider numerical experiments for particular forward-backward equations that have a regularizing term added, but only for small values of the regularizing parameter and do not consider numerical schemes for the limiting case. We mention in passing that for the Keller--Segel system, which is a coupled parabolic-elliptic system of partial differential equations, a stochastic interacting particle approximation of global-in-time measure-valued solutions in two space dimensions was considered by Ha\v{s}kovec \& Schmeiser in \cite{HS}.

The aim of this work is to develop and analyze a numerical scheme for the approximation of Young measure solutions of parabolic systems of equations under minimal assumptions on the form taken by the nonlinearity $a$, and under no restriction on the dimension of the domain $\Omega$. We make the point here that the scheme discussed in this article will not allow us to compute the Young measures appearing in the references \cite{Demo,FreSpec,CarsRou,Thanh,BFG} cited above as taking different approximating sequences to the same function will potentially result in different families of Young measures being generated. Furthermore, without a concept of uniqueness, there is no guarantee that the functions appearing in the Young measure solutions will even be the same.

The paper is structured as follows. In the next section we shall collect miscellaneous results that are used in the subsequent analysis. In Section \ref{sec:3} we will show the existence of a large-data global-in-time Young measure solution to the system \eqref{modelprob1}--\eqref{modelconstants} under consideration by performing a spatial finite element approximation and passing to the limit in the spatial discretization parameter. In Section \ref{sec:4}, we shall also discretize with respect to the temporal variable and show the convergence of the fully discrete scheme to a Young measure solution of the problem. We note here, however, that the results from Section \ref{sec:3} and Section \ref{sec:4} only yield to us indirect information about the measure $\nu$ through its action on functions belonging to a particular function space. This is the motivation behind Section \ref{sec:5}, in which, stimulated by recent contributions by Fjordholm et al. \cite{FMT} and \cite{FKMT}, in particular, we shall develop a numerical algorithm for the computation of Young measure solutions which allows for a more direct approximation of the Young measure $\nu$, and will prove that the algorithm converges to a Young measure solution of the problem under consideration.

\section{Preliminary Definitions and Results}\label{sec:2}

\subsection{Function Spaces}

We begin by defining, for $r > 0$, the function space
\[E_{r}:=\left\{g\in C(\R{m \times n};\R{m}): \lim_{|A|\rightarrow\infty}\frac{g(A)}{1+|A|^{r}}\ \text{exists}\right\},\]
which will be of use to us later. On this space we consider the norm
\[\|g\|_{E_{r}}=\sup_{A\in\R{m \times n}}\frac{|g(A)|}{1+|A|^{r}}.\]
\begin{lemma}\label{propsofF}
The following statements hold true for $r > 0$:
\begin{enumerate}
\item The space $E_{r}$ is separable;
\item The space $E_{r}$ is metrizable.
\end{enumerate}
\end{lemma}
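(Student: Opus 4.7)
The plan is to reduce both claims to standard facts about spaces of continuous functions on compact metric spaces by constructing an explicit isometric isomorphism. I would introduce the strictly positive continuous weight $w(A) := 1+|A|^r$ on $\R{m \times n}$ and consider the one-point compactification $\widehat{\R{m\times n}} := \R{m\times n} \cup \{\infty\}$, which is homeomorphic to the sphere $S^{mn}$ and hence a compact metric space. To each $g \in E_r$ I associate the function $\Phi(g) : \widehat{\R{m\times n}} \to \R{m}$ defined by
\[
\Phi(g)(A) := \frac{g(A)}{w(A)} \quad \text{for } A \in \R{m\times n}, \qquad \Phi(g)(\infty) := \lim_{|A| \to \infty} \frac{g(A)}{1+|A|^r},
\]
where the limit at infinity exists precisely by the defining property of $E_r$.

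First I would verify that $\Phi(g) \in C(\widehat{\R{m\times n}};\R{m})$. Continuity at finite points follows because $w$ is continuous and bounded away from zero, while continuity at $\infty$ in the topology of the one-point compactification is exactly the statement that $g(A)/w(A)$ tends to $\Phi(g)(\infty)$ as $|A|\to\infty$. Conversely, given any $h \in C(\widehat{\R{m\times n}};\R{m})$, the product $g(A) := w(A)h(A)$ defines an element of $E_r$ with $\Phi(g) = h$, so $\Phi$ is a bijection. Linearity is immediate from the definition, and the isometry property
\[
\|\Phi(g)\|_{\infty} = \sup_{A \in \widehat{\R{m\times n}}} |\Phi(g)(A)| = \sup_{A \in \R{m\times n}} \frac{|g(A)|}{1+|A|^r} = \|g\|_{E_r}
\]
follows because the supremum over $\widehat{\R{m\times n}}$ is attained over the dense subset $\R{m\times n}$ by continuity.

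Given this identification, both assertions are immediate. Claim (2) is in fact trivial, since $\|\cdot\|_{E_r}$ is a norm on the vector space $E_r$ and therefore induces a metric $d(f,g) := \|f-g\|_{E_r}$; I would simply remark on this. For claim (1), the space $C(\widehat{\R{m\times n}};\R{m})$ is separable because $\widehat{\R{m\times n}}$ is a compact metric space, so by the Stone–Weierstrass theorem (applied componentwise) the algebra of polynomials with rational coefficients in a fixed countable dense set of coordinate functions is dense, yielding a countable dense subset; separability then transfers across the isometric isomorphism $\Phi^{-1}$ to $E_r$. The only nontrivial point is the continuity of $\Phi(g)$ at $\infty$, but this is essentially the defining condition of $E_r$, so I do not anticipate any real obstacles.
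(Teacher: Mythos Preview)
Your argument is correct. The paper itself gives essentially no proof: for separability it simply cites Kinderlehrer \& Pedregal, and for metrizability it notes, as you do, that the norm $\|\cdot\|_{E_r}$ induces a metric. Your self-contained route via the isometric identification $E_r \cong C(\widehat{\R{m\times n}};\R{m})$ through the weight $w(A)=1+|A|^r$ and the one-point compactification is the standard mechanism underlying the cited result, so there is no substantive difference in approach---you have simply supplied the details the paper outsources.
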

\begin{proof}
For the first property we refer to Kinderlehrer \& Pedregal \cite{KinPed2}. The second follows by taking the metric induced by the norm $\|\cdot\|_{E_{r}}$.
\end{proof}
\begin{rem}\label{rem:sep}
We remark here that $E_{r}$ is a subspace of the larger space
\[\tilde{E}_{r}:=\left\{g\in C(\R{m \times n};\R{m}):\sup_{A\in\R{m \times n}}\frac{|g(A)|}{1+|A|^{r}}<\infty\right\},\]
which is not separable. For example, under the assumption \eqref{modelgrowth}, $a \in \tilde{E}_{q-1}$, and for any
$\alpha \in (1,\frac{p}{q-1})$ we have that $0<q-1 < \frac{p}{\alpha}<p$, and therefore $a \in E_{\frac{p}{\alpha}}$.
\end{rem}

As $C_0(\R{m \times n};\R{m})\subset E_{r}$ for all $r > 0$, we have the following nesting of dual spaces for all $r > 0$:
\[E_{r}'\subset \mathcal{M}(\R{m \times n};\R{m})= C_0(\R{m \times n};\R{m})'.\]
Since the dual of a normed space is a Banach space, we have that $E_{r}'$ is a Banach space for all $r > 0$, as we can equip $E_{r}$ with the norm induced by its metric. In particular, thanks to the Banach--Alaoglu Theorem, any bounded sequence in $E_{r}'$ has a weakly-$\ast$ convergent subsequence.

\subsection{Young Measures}

We recall here from \cite{Ball} the so-called Fundamental Theorem for Young Measures, together with some related remarks.

\begin{theorem}\label{FTYM}
Let $\Omega\subset\R{n}$ be Lebesgue measurable, let $\mathfrak{K}\subset\R{m}$ be closed and let
$z_j:\Omega\rightarrow\R{m}$, $j=1,2,\ldots,$ be a sequence of Lebesgue measurable functions satisfying
\[\lim_{j\rightarrow\infty}|\{x\in\Omega:\ z_j(x)\notin U\}|=0\]
for any open neighbourhood $U$ of $\mathfrak{K}$ in \R{m}. Then, there exists a subsequence $\{z_{j_k}\}$ of $\{z_j\}$ and a family of positive measures on \R{m}, $\{\nu_x\}_{x\in\Omega}$, depending measurably on $x\in\Omega$, such that:
\begin{enumerate}
\item $\|\nu_x\|_M:=\int_{\R{m}}{\rm d}\nu_x\leq1$ for almost every $x\in\Omega$;
\item $\mathrm{supp}\ \nu_x\subset \mathfrak{K}$ for almost every $x\in\Omega$; and
\item $f(z_{j_k})\overset{\ast}{\rightharpoonup}\langle\nu_x,f\rangle=\int_{\R{m}}f(\lambda){\dd}\nu_x(\lambda)$ in $L^\infty(\Omega)$
for each continuous function $f:\R{m}\rightarrow\R{}$ satisfying $\lim_{|\lambda|\rightarrow\infty}f(\lambda)=0$.
\end{enumerate}
Suppose further that $\{z_{j_k}\}$ satisfies the following tightness condition:
\[\lim_{s\rightarrow\infty}\sup_{k}|\{x\in\Omega\cap B_R(0):\ |z_{j_k}(x)|\geq s\}|=0,\]
for every $R>0$. Then, we have that $\|\nu_x\|_M=1$ for almost every $x\in\Omega$, and, for any measurable subset $A$ of the set $\Omega$,
\[f(z_{j_k})\rightharpoonup\langle\nu_x,f\rangle\]
in $L^1(A)$ for any continuous function $f:\R{m}\rightarrow\R{}$ such that $\{f(z_{j_k})\}$ is sequentially weakly relatively compact in $L^1(A)$.
\end{theorem}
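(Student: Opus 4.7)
The plan is to view each $z_j$ as generating a parametrized family of Dirac measures $\nu^j_x := \delta_{z_j(x)}$ on $\R{m}$ and extract a weak-$\ast$ limit in a suitable dual space. Since $\R{m}$ is $\sigma$-compact, $C_0(\R{m})$ is separable; taking $\Omega$ $\sigma$-finite, the Bochner space $L^1(\Omega;C_0(\R{m}))$ is separable, and its dual may be identified with $L^\infty_w(\Omega;\mathcal{M}(\R{m}))$, the space of essentially bounded weakly-$\ast$ measurable families of Radon measures, acting by $\langle\nu,\phi\rangle = \int_\Omega \langle \nu_x, \phi(x,\cdot)\rangle\dd x$. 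Each $\nu^j$ has norm $1$ in this dual, so the Banach--Alaoglu theorem yields a subsequence $\{\nu^{j_k}\}$ converging weakly-$\ast$ to some $\{\nu_x\}_{x\in\Omega}$. Testing against $\varphi(x) f(\lambda)$ with $\varphi\in L^1(\Omega)$ and $f\in C_0(\R{m})$ immediately gives property (3).

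For (1), I would test with nonnegative $f_R \in C_0(\R{m})$ satisfying $0 \leq f_R \leq 1$ and $f_R \equiv 1$ on $B_R(0)$. Since $f_R(z_{j_k}) \leq 1$ pointwise, the limit satisfies $\langle \nu_x, f_R\rangle \leq 1$ for a.e.\ $x$, and letting $R\to\infty$ by monotone convergence gives $\|\nu_x\|_M \leq 1$. For (2), fix an open neighbourhood $U$ of $\mathfrak{K}$ and any $g\in C_0(\R{m})$ with $\mathrm{supp}\,g \subset \R{m}\setminus U$; since $g(z_{j_k})$ vanishes off the set $\{z_{j_k}\notin U\}$, whose measure tends to $0$, the weak-$\ast$ convergence forces $\langle\nu_x,g\rangle = 0$ a.e. Exhausting $\R{m}\setminus\mathfrak{K}$ by countably many compacts and intersecting over a countable family of shrinking neighbourhoods of $\mathfrak{K}$ then yields $\mathrm{supp}\,\nu_x \subset \mathfrak{K}$ a.e.

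For the tightness consequences, the hypothesis $\lim_{s\to\infty}\sup_k |\{x\in \Omega\cap B_R(0) : |z_{j_k}(x)| \geq s\}| = 0$ combined with the cutoffs $f_R$ above implies that $\int_A f_R(z_{j_k})\dd x \to |A|$ uniformly in $k$ for any bounded measurable $A \subset \Omega$; passing to the weak-$\ast$ limit first and then $R\to\infty$ gives $\int_A \|\nu_x\|_M \dd x = |A|$, whence $\|\nu_x\|_M = 1$ a.e., and this extends to all of $\Omega$ by exhaustion. For the final $L^1$ statement, sequential weak relative compactness of $\{f(z_{j_k})\}$ in $L^1(A)$ is equivalent by Dunford--Pettis to uniform integrability. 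I would write $f = \chi_R f + (1-\chi_R)f$ for a $C_0$ cutoff $\chi_R \equiv 1$ on $B_R$; the first piece is in $C_0(\R{m})$, so by property (3) the sequence $\chi_R(z_{j_k}) f(z_{j_k})$ converges weakly-$\ast$ in $L^\infty(A)$, hence weakly in $L^1(A)$, to $\langle \nu_x, \chi_R f\rangle$, while uniform integrability lets the tail $(1-\chi_R)f(z_{j_k})$ be made arbitrarily small in $L^1(A)$ uniformly in $k$ by choosing $R$ large. A standard diagonal argument delivers weak $L^1$ convergence of $f(z_{j_k})$ to $\langle\nu_x,f\rangle$.

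The main obstacle is the abstract identification $L^1(\Omega;C_0(\R{m}))^\ast \cong L^\infty_w(\Omega;\mathcal{M}(\R{m}))$ that lets the weak-$\ast$ limit be realised as a genuine parametrized family with the required measurability; without it one obtains only a functional on the tensor product, not a measurable selection. Once that representation theorem is in place, the remainder is a careful bookkeeping of approximation by compactly supported test functions and monotone/dominated convergence.
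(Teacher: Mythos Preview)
The paper does not supply its own proof of this theorem: it is stated as a recalled result from Ball's paper \cite{Ball} (introduced with ``We recall here from \cite{Ball} the so-called Fundamental Theorem for Young Measures''), and no proof environment follows the statement. Your proposal is therefore not competing with a proof in the paper but rather reconstructing the classical argument.

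That said, your sketch is essentially Ball's proof and is correct. The key steps --- embedding the sequence as Dirac families in $L^\infty_w(\Omega;\mathcal{M}(\R{m})) \cong L^1(\Omega;C_0(\R{m}))^\ast$, invoking sequential Banach--Alaoglu via separability of the predual, then deducing (1)--(3) by testing against suitable $C_0$ functions, and finally using tightness plus Dunford--Pettis/uniform integrability for the $L^1$ statement --- are exactly the standard route. You are also right to flag the duality identification $L^1(\Omega;C_0(\R{m}))^\ast \cong L^\infty_w(\Omega;\mathcal{M}(\R{m}))$ as the genuine technical input; this is the ``lifting'' theorem (due in various forms to Ionescu--Tulcea, Dunford--Pettis, or Edwards) that turns an abstract functional into a measurable family of measures, and without it the argument does not close. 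One minor point: the hypothesis that $\Omega\subset\R{n}$ is Lebesgue measurable already guarantees $\sigma$-finiteness, so that assumption need not be added.
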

\begin{rem}\label{rem:1}
If $\Omega$ is bounded and the sequence $\{z_j\}$ is bounded in $L^p(\Omega;\R{m})$ for some $p\in(1,\infty)$, then we obtain from the theorem the existence of a family of probability measures $\{\nu_x\}_{x \in \Omega}$ and a subsequence $z_{j_k}$ such that
\[f(z_{j_k})\rightharpoonup \langle\nu,f\rangle\]
in $L^r(\Omega)$, where $f:\R{m}\rightarrow\R{}$ is a continuous function satisfying
\[|f(\lambda)|\leq c(1+|\lambda|^{s})\]
with $1<r<\frac{p}{s}$ and $s>0$.
\end{rem}
\begin{rem}\label{rem:2}
The assumption that
\[\lim_{j\rightarrow\infty}|\{x\in\Omega:\ z_j(x)\notin U\}|=0\]
for any open neighbourhood $U$ of $\mathfrak{K}$ is only used to prove that {\rm (2)} in Theorem \ref{FTYM} holds, and is not needed to obtain any of its other conclusions. The proof of this theorem from \cite{Ball} is still valid if we take $\mathfrak{K}=\R{m}$, only it then provides no information about the support of the Young measure $\nu$. Therefore, if we are not concerned with the support of the Young measure $\nu$ we may always take $\mathfrak{K}=\R{m}$ in order to apply this theorem.
\end{rem}

\subsection{Miscellaneous Results}
We recall the following result from Strauss \cite{Strauss} (cf. also Lions \& Magenes \cite{Lions-Magenes}, Lemma 8.1, Ch. 3, Sec. 8.4).
\begin{lemma}\label{lem-Cw-Cw*}
Suppose that $X$ and $Y$ are Banach spaces. Assume that the space $X$ is reflexive and is continuously embedded in the space $Y$\!; then,
\[
L^\infty(0, T;X) \cap C_w([0, T];Y) = C_w([0, T];X),\]
where $C_w([0,T];X)$ and $C_w([0,T];Y)$ denote the spaces of weakly continuous functions from $[0,T]$ into $X$ and $Y$, respectively.
\end{lemma}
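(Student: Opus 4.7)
The plan is to show both inclusions, with the nontrivial content concentrated in the direction $L^\infty(0,T;X) \cap C_w([0,T];Y) \subseteq C_w([0,T];X)$, where reflexivity of $X$ is crucial.

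For the easy inclusion $C_w([0,T];X) \subseteq L^\infty(0,T;X) \cap C_w([0,T];Y)$, I would argue as follows. If $u \in C_w([0,T];X)$, then for each $\varphi \in X^*$ the scalar map $t \mapsto \langle \varphi, u(t) \rangle$ is continuous on the compact interval $[0,T]$, hence bounded. By the uniform boundedness principle applied to the family $\{u(t)\}_{t \in [0,T]} \subset X$, we get $\sup_{t \in [0,T]} \|u(t)\|_X < \infty$, so $u \in L^\infty(0,T;X)$. Moreover, every $\psi \in Y^*$ restricts to an element of $X^*$ via the continuous embedding $X \hookrightarrow Y$, so $t \mapsto \langle \psi, u(t) \rangle_{Y^*,Y}$ is continuous, giving $u \in C_w([0,T];Y)$.

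For the nontrivial inclusion, let $u \in L^\infty(0,T;X) \cap C_w([0,T];Y)$, and let $M := \operatorname{ess\,sup}_{t \in (0,T)} \|u(t)\|_X < \infty$. The weakly continuous $Y$-valued representative is uniquely determined, so we work with that pointwise-defined $u(t)$. First I would upgrade the $X$-membership from almost every $t$ to every $t$. Fix $t_0 \in [0,T]$ and pick a sequence $t_n \to t_0$ with each $t_n$ in the full-measure set on which $\|u(t_n)\|_X \leq M$. Reflexivity of $X$ gives a subsequence and some $v \in X$ with $u(t_{n_k}) \rightharpoonup v$ in $X$. The continuous embedding $X \hookrightarrow Y$ implies $u(t_{n_k}) \rightharpoonup v$ in $Y$ as well, but from $u \in C_w([0,T];Y)$ we also have $u(t_{n_k}) \rightharpoonup u(t_0)$ in $Y$. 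By uniqueness of weak limits in $Y$, $u(t_0) = v \in X$. The weak lower semicontinuity of $\|\cdot\|_X$ yields $\|u(t_0)\|_X \leq M$, so in fact $\sup_{t \in [0,T]} \|u(t)\|_X \leq M$.

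It remains to promote weak continuity from $Y$ to $X$. Fix $t_0 \in [0,T]$ and any sequence $t_n \to t_0$. Since $\|u(t_n)\|_X \leq M$ by the previous step and $X$ is reflexive, every subsequence of $\{u(t_n)\}$ admits a further subsequence converging weakly in $X$ to some $w \in X$. Arguing as above, this weak $X$-limit is also the weak $Y$-limit, which is forced to be $u(t_0)$ by $u \in C_w([0,T];Y)$. Thus every subsequential weak-$X$ cluster point equals $u(t_0)$, which by the standard subsequence principle (using boundedness of $\{u(t_n)\}_X$ and separability of testing) gives $u(t_n) \rightharpoonup u(t_0)$ in $X$. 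Hence $u \in C_w([0,T];X)$.

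The main obstacle, and the only place where the hypotheses are doing real work, is the step upgrading weak $Y$-continuity to weak $X$-continuity: it depends essentially on reflexivity of $X$ (to extract weak-$X$ limits from $X$-bounded sequences) and on the continuous embedding $X \hookrightarrow Y$ (to identify those limits via uniqueness of weak limits in $Y$). The $L^\infty(0,T;X)$ bound provides the uniform $X$-norm control that makes this extraction possible.
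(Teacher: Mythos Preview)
Your proof is correct. The paper does not supply its own proof of this lemma; it is stated as a known result with references to Strauss and to Lions--Magenes (Lemma 8.1, Ch.~3, Sec.~8.4), so there is no ``paper's approach'' to compare against. The argument you give---upgrading $u(t)\in X$ from almost every $t$ to every $t$ via reflexivity and identification of weak limits through the embedding, then applying the subsequence principle---is the standard route to this result and is exactly what one finds in those references. One small remark: the parenthetical ``separability of testing'' in your final step is unnecessary and slightly misleading; the subsequence principle for weak convergence works directly from the contradiction argument (if $u(t_n)\not\rightharpoonup u(t_0)$ in $X$, some $\varphi\in X^*$ witnesses a subsequence bounded away from $\langle\varphi,u(t_0)\rangle$, and that subsequence would still have a further weakly-$X$-convergent subsequence with limit $u(t_0)$), with no separability needed.
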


We shall also require the following consequence of the Arzel\`{a}--Ascoli theorem.

\begin{lemma}\label{ArzAscApp}
Let $\{u^j\}$ be a sequence of functions with the following properties:
\begin{enumerate}
\item $\{u^j\}$ is bounded in $L^\infty(0,T; L^2(\Omega;\R{m}))$
and
\[ u^j\overset{\ast}\rightharpoonup u\ \text{in}\ L^\infty(0,T; L^2(\Omega;\R{m}));\]
\item $\{\partial_tu^j\}$ is bounded in $L^{s'}\!(0,T;W^{-1,s'}\!(\Omega;\R{m}))$ for some $s \in (1,\infty)$,
and
\[\partial_tu^j\rightharpoonup\partial_tu\ \text{in}\ L^{s'}\!(0,T;W^{-1,s'}\!(\Omega;\R{m})).\]
\end{enumerate}
Then, $u^{j_k}, u \in C_w([0,T];L^2(\Omega;\R{m})$, and there is a subsequence $\{u^{j_k}\}$ such that
\[\int_\Omega u^{j_k}(t,x)\cdot w(x){\dd}x \rightarrow\int_\Omega u(t,x)\cdot w(x){\dd}x\]
uniformly in $C([0,T])$ for all $w\in L^2(\Omega;\R{m})$ as $k \rightarrow \infty$.
\end{lemma}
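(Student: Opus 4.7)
The plan is to combine the weakly-continuous-functions embedding of Lemma \ref{lem-Cw-Cw*} with a diagonal Arzel\`{a}--Ascoli argument. First, to obtain $u\in C_w([0,T];L^2(\Omega;\R{m}))$, I would invoke Lemma \ref{lem-Cw-Cw*} with $X=L^2(\Omega;\R{m})$ and $Y=W^{-1,s'}\!(\Omega;\R{m})$: the hypothesis $u\in L^\infty(0,T;L^2)$ is immediate from the weak-$\ast$ convergence, and since $\partial_t u\in L^{s'}(0,T;W^{-1,s'})$ the standard Bochner embedding $W^{1,s'}(0,T;W^{-1,s'})\hookrightarrow C([0,T];W^{-1,s'})$ gives $u\in C_w([0,T];W^{-1,s'})$, so the lemma applies. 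The identical reasoning applied to each $u^j$ (which satisfies the same two hypotheses as $u$) gives $u^{j_k}\in C_w([0,T];L^2)$ as well.

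Next, for any fixed $w\in W^{1,s}_0(\Omega;\R{m})$, set $f_j(t):=\int_\Omega u^j(t,x)\cdot w(x){\dd}x$. The family $\{f_j\}$ is uniformly bounded on $[0,T]$ by $\|w\|_{L^2}\sup_j\|u^j\|_{L^\infty(0,T;L^2)}$. For equicontinuity one writes, for $0\leq t_2<t_1\leq T$,
\[
f_j(t_1)-f_j(t_2)=\int_{t_2}^{t_1}\langle \partial_t u^j(\tau),w\rangle_{(W^{-1,s'}\!,W^{1,s}_0)}{\dd}\tau,
\]
and H\"older's inequality in time yields $|f_j(t_1)-f_j(t_2)|\leq \|w\|_{W^{1,s}_0}\|\partial_t u^j\|_{L^{s'}(0,T;W^{-1,s'})}|t_1-t_2|^{1/s}$, uniformly in $j$. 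Arzel\`{a}--Ascoli then provides, for each such $w$, a subsequence of $\{f_j\}$ converging in $C([0,T])$; testing the weak-$\ast$ convergence $u^j\overset{\ast}\rightharpoonup u$ in $L^\infty(0,T;L^2)$ against tensor products $\phi(t)w(x)$ with $\phi\in L^1(0,T)$ identifies the uniform limit as $t\mapsto\int_\Omega u(t,x)\cdot w(x){\dd}x$.

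To produce a single subsequence that works uniformly in $w\in L^2$, I would fix a countable set $\{w_i\}_{i\in\mathbb{N}}\subset W^{1,s}_0(\Omega;\R{m})$ that is dense in $L^2(\Omega;\R{m})$ (available since $C_c^\infty$ is dense in $L^2$), apply the previous step to $w_1$, extract a further subsequence for $w_2$, and so on, and then take the diagonal subsequence $\{u^{j_k}\}$, along which the uniform convergence in $C([0,T])$ holds simultaneously for every $w_i$. Given arbitrary $w\in L^2$ and $\varepsilon>0$, choose $w_i$ with $\|w-w_i\|_{L^2}<\varepsilon$; then
\[
\sup_{t\in[0,T]}\left|\int_\Omega(u^{j_k}(t,x)-u(t,x))\cdot w(x){\dd}x\right|\leq 2\varepsilon\sup_j\|u^j\|_{L^\infty(0,T;L^2)}+\sup_{t\in[0,T]}\left|\int_\Omega(u^{j_k}(t,x)-u(t,x))\cdot w_i(x){\dd}x\right|,
\]
and the last term tends to $0$ as $k\to\infty$, which yields the stated conclusion.

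The principal technical obstacle is reconciling the density argument with the fact that the equicontinuity estimate above is only available for test functions in $W^{1,s}_0$: the $L^\infty(0,T;L^2)$ bound on $\{u^j\}$ is precisely what permits the uniform-in-$t$ convergence to be transferred from the dense subset back to arbitrary $w\in L^2$, while the diagonal extraction ensures that a single subsequence, independent of $w$, suffices.
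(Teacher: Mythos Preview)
Your argument follows the paper's almost exactly: invoke Lemma \ref{lem-Cw-Cw*} for weak continuity, then run Arzel\`a--Ascoli on $f_j(t)=\int_\Omega u^j(t)\cdot w$ for $w\in W^{1,s}_0$ (boundedness from the $L^\infty(0,T;L^2)$ bound, equicontinuity from the $\partial_t$-bound via H\"older), identify the limit by testing the weak-$\ast$ convergence against time--space tensors, and pass to general $w\in L^2$ by density. One small technical wrinkle: Lemma \ref{lem-Cw-Cw*} requires $X=L^2$ to embed continuously into $Y$, and $L^2\hookrightarrow W^{-1,s'}$ fails when $s<2n/(n+2)$; the paper sidesteps this by taking $Y=L^2+W^{-1,s'}=(L^2\cap W^{1,s}_0)'$, into which $L^2$ embeds trivially for every $s\in(1,\infty)$. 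Conversely, your explicit diagonal extraction over a countable $L^2$-dense subset of $W^{1,s}_0$ is more careful than the paper's write-up, which applies Arzel\`a--Ascoli for a fixed $w$ and then asserts the conclusion ``for all $w$'' without spelling out how a single subsequence is obtained.
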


\begin{proof} We shall apply the Arzel\`a--Ascoli theorem for sequences of uniformly bounded and equicontinuous functions. For $w \in L^2(\Omega;\R{m})\cap W_0^{1,s}(\Omega;\R{m})$, we let $f^j:[0,T]\rightarrow\R{}$ be the family of functions defined by
\[f^j(t)=\int_\Omega u^j(t,x)\cdot w(x){\dd}x.\]
Since $\partial_tu^j \in L^{s'}\!(0,T;W^{-1,s'}\!(\Omega;\R{m}))$, it automatically follows that $u^j \in C([0,T];W^{-1,s'}\!(\Omega;\R{m}))$, whereby also $u^j \in C_w([0,T];W^{-1,s'}\!(\Omega;\R{m}))$. As $u^j \in  L^\infty(0,T; L^2(\Omega;\R{m}))$, it then follows from Lemma \ref{lem-Cw-Cw*} with $X = L^2(\Omega;\R{m})$ and $Y = L^2(\Omega;\R{m}) + W^{-1,s'}\!(\Omega;\R{m}) = (L^2(\Omega;\R{m}) \cap W^{1,s}_0(\Omega;\R{m}))'$, that $u^j \in  C_w([0,T]; L^2(\Omega;\R{m}))$. We note here that the duality $(L^2(\Omega;\R{m}) \cap W^{1,s}_0(\Omega;\R{m}))' = L^2(\Omega;\R{m}) + W^{-1,s'}\!(\Omega;\R{m})$ is a consequence of the Duality Theorem (cf. Theorem 2.7.1 on p.32 in Bergh \& L\"ofstr\"om \cite{BL}), because $L^2(\Omega;\R{m}) \cap W^{1,s}_0(\Omega;\R{m})$ is dense in both $L^2(\Omega;\R{m})$ and $W^{1,s}_0(\Omega;\R{m})$ (e.g., because $C^\infty_0(\Omega;\R{m})$ is dense in both $L^2(\Omega;\R{m})$ and $W^{1,s}_0(\Omega;\R{m})$). Hence, $f^j \in C([0,T])$.  We further have that
\begin{align*}
|f^j(t)|\leq \int_\Omega |u^j(t,x)||w(x)|{\dd}x
\leq \|u^j\|_{L^\infty(0,T;L^2(\Omega;\R{m}))}\|w\|_{L^2(\Omega;\R{m})}
\leq C\|w\|_{L^2(\Omega;\R{m})},
\end{align*}
which gives the boundedness of the sequence $\{f^j\}$ in $C([0,T])$ for each $w\in L^2(\Omega;\R{m})\cap W_0^{1,s}(\Omega;\R{m})$. In order to verify equicontinuity of the sequence, note that, for all $w \in L^2(\Omega;\R{m})\cap W_0^{1,s}(\Omega;\R{m})$,
\begin{align*}
|f^j(t+h)-f^j(t)|=\left|\int_\Omega(u^j(t+h,x)-u^j(t,x))\cdot w(x){\dd}x\right|
=\left|\int_t^{t+h}\int_\Omega\partial_tu^j(s,x)\cdot w(x){\dd}x{\dd}s\right|\\
\leq \|\partial_tu^j\|_{L^{s'}\!(0,T;W^{-1,s'}\!(\Omega;\R{m}))}|h|^{\frac{1}{s}}\|w\|_{W^{1,s}(\Omega;\R{m})}
\leq C|h|^{\frac{1}{s}}\|w\|_{W^{1,s}(\Omega;\R{m})}.
\end{align*}
Thus by the Arzel\`a--Ascoli theorem there is a subsequence $\{f^{j_k}\}$, which converges uniformly as $k\rightarrow\infty$ to some function $f \in C([0,T])$. We note that as this convergence is uniform and the sequence $f^{j_k}$ is uniformly bounded we have that
\[\lim_{k\rightarrow\infty}\int_0^T\varphi(t) \int_\Omega u^{j_k}(t,x)\cdot w(x){\dd}x{\dd}t
=\lim_{k\rightarrow\infty}\int_0^T \varphi(t) f^{j_k}(t){\dd}t=\int_0^T \varphi(t) f(t){\dd}t\qquad \forall\, \varphi \in C^\infty([0,T]).\]
However, by the assumed weak convergence, we also have that
\[\lim_{k\rightarrow\infty}\int_0^T\varphi(t)\int_\Omega u^{j_k}(t,x)\cdot w(x){\dd}x{\dd}t=\int_0^T\varphi(t) \int_\Omega u(t,x)\cdot w(x){\dd}x{\dd}t \qquad \forall\, \varphi \in C^\infty([0,T]),\]
allowing us to identify
\[f(t)=\int_\Omega u(t,x)\cdot w(x){\dd}x \qquad \forall\,t \in [0,T].\]
Thus we have shown that, for all $w \in L^2(\Omega;\R{m})\cap W_0^{1,s}(\Omega;\R{m})$,
\[\int_\Omega u^{j_k}(t,x)\cdot w(x){\dd}x\rightarrow\int_\Omega u(t,x)\cdot w(x){\dd}x,\qquad \mbox{uniformly in $C([0,T])$}.\]
As $L^2(\Omega;\R{m})\cap W_0^{1,s}(\Omega;\R{m})$ is dense in $L^2(\Omega;\R{m})$, it then follows that, for all $w \in L^2(\Omega;\R{m})$,
\[\int_\Omega u^{j_k}(t,x)\cdot w(x){\dd}x\rightarrow\int_\Omega u(t,x)\cdot w(x){\dd}x,\qquad \mbox{uniformly in $C([0,T])$},\]
which in particular means that for all $t\in[0,T]$ we have $u^{j_k}(t,\cdot)\rightharpoonup u(t,\cdot)$ in the space $L^2(\Omega;\R{m})$.
\end{proof}
Finally, we recall the following standard Lebesgue space interpolation result, whose proof is omitted.
\begin{lemma}\label{parabolicinterpolation}
Let $u:[0,T]\times\Omega\rightarrow\R{m}$, where $\Omega\subset\R{n}$ is open and bounded, satisfy
\[u\in L^p(0,T;W_0^{1,p}(\Omega;\R{m}))\cap L^\infty(0,T;L^2(\Omega;\R{m}))\]
for some $p\geq1$. Then,
\[u\in L^{\frac{p(n+2)}{n}}(Q_T;\R{m}).\]
\end{lemma}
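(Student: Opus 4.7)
The plan is to establish the claim by the classical parabolic interpolation trick: apply a Gagliardo--Nirenberg type inequality pointwise in time, raise it to the power $r := p(n+2)/n$, and then integrate in $t$, using the $L^\infty(0,T;L^2)$ bound as a constant and the $L^p(0,T;W^{1,p}_0)$ bound for the residual integrable factor. The arithmetic behind the choice of exponent is dictated by the requirement that the time integrand ends up as exactly $\|\nabla u(t)\|_{L^p(\Omega)}^p$.

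First, for a.e.\ $t\in (0,T)$, I will apply the interpolation inequality
\[
\|u(t,\cdot)\|_{L^{r}(\Omega)} \leq C\, \|u(t,\cdot)\|_{L^{2}(\Omega)}^{1-\theta}\, \|u(t,\cdot)\|_{W^{1,p}_0(\Omega)}^{\theta},
\]
where $\theta$ is to be chosen so that raising the inequality to the power $r$ yields a gradient exponent equal to $p$, i.e.\ $\theta r = p$, so $\theta = p/r$. A direct computation using the scaling relation
\[
\frac{1}{r} = \theta\Bigl(\frac{1}{p} - \frac{1}{n}\Bigr) + (1-\theta)\,\frac{1}{2}
\]
shows that this forces $r = p(n+2)/n$, and that $\theta = p/r = n/(n+2)\in(0,1)$, so the interpolation is admissible. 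In the subcritical regime $p<n$ this is the classical Gagliardo--Nirenberg inequality combined with the Sobolev embedding $W^{1,p}_0(\Omega)\hookrightarrow L^{np/(n-p)}(\Omega)$; for $p\geq n$ the bound still holds (for $p=n$ via the embedding $W^{1,n}_0(\Omega)\hookrightarrow L^{s}(\Omega)$ for every finite $s$, and for $p>n$ via $W^{1,p}_0(\Omega)\hookrightarrow L^\infty(\Omega)$), possibly after a slightly different application of Hölder's inequality, but in each case the exponent $r=p(n+2)/n$ is reached.

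Second, I raise the pointwise inequality to the power $r$ and integrate over $(0,T)$:
\[
\int_0^T \|u(t,\cdot)\|_{L^r(\Omega)}^r \dd t \leq C \int_0^T \|u(t,\cdot)\|_{L^2(\Omega)}^{(1-\theta)r} \|\nabla u(t,\cdot)\|_{L^p(\Omega)}^{\theta r} \dd t.
\]
Since $\theta r = p$ by construction, the last factor is precisely $\|\nabla u(t,\cdot)\|_{L^p(\Omega)}^p$, which is integrable on $(0,T)$ by assumption. The $L^2$-factor can be pulled out of the time integral using $u\in L^\infty(0,T;L^2(\Omega;\R{m}))$, yielding
\[
\|u\|_{L^r(Q_T;\R{m})}^r \leq C\,\|u\|_{L^\infty(0,T;L^2(\Omega;\R{m}))}^{(1-\theta)r}\,\|u\|_{L^p(0,T;W^{1,p}_0(\Omega;\R{m}))}^{p} < \infty,
\]
which is the desired conclusion.

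The only delicate point is the verification of the Gagliardo--Nirenberg inequality under the stated minimal hypothesis $p\geq 1$ on a bounded Lipschitz domain; this is where one must split into the cases $p<n$, $p=n$, and $p>n$ and invoke the appropriate Sobolev embedding, together with Poincaré's inequality (valid on $W^{1,p}_0(\Omega)$ by the zero boundary trace), to replace the full $W^{1,p}$ norm by $\|\nabla u\|_{L^p}$. Once this pointwise-in-time inequality is in hand, the integration step is entirely routine.
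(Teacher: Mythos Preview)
The paper omits the proof of this lemma entirely (``the following standard Lebesgue space interpolation result, whose proof is omitted''), so there is nothing to compare against; your argument is the standard one and is correct. The choice $\theta=n/(n+2)$, $r=p(n+2)/n$ satisfies the Gagliardo--Nirenberg scaling relation and yields $\theta r=p$, so that after raising to the power $r$ and integrating in time the $L^\infty(0,T;L^2)$ factor comes out as a constant and the remaining factor is exactly $\int_0^T\|\nabla u(t,\cdot)\|_{L^p(\Omega)}^p\,\mathrm{d}t$, which is finite by hypothesis.
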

\section{Spatial discretization and convergence of the semidiscrete problem}\label{sec:3}

In this section we prove the existence of solutions to the system \eqref{modelprob1}--\eqref{modelconstants}. In preparation for the construction of the fully discrete numerical approximation of the class of problems under consideration described in the next section, our proof of existence of large-data global-in-time Young measure solutions will be based on performing a spatial finite element approximation. We shall therefore suppose henceforth that $\Omega$ is a Lipschitz polytope in $\R{n}$.
The proof presented below can be replicated on any bounded Lipschitz domain $\Omega$ by replacing the finite element basis, consisting of continuous piecewise linear basis functions satisfying a homogeneous Dirichlet boundary condition on $\partial \Omega$ by an abstract Galerkin basis. Suppose further that $0<h_0\ll \mbox{diam}(\Omega)$ and that $\{\mathcal{T}_h\}_{h \in (0,h_0]}$ is a shape-regular family of subdivisions $\mathcal{T}_h$ of $\overline\Omega$ into closed simplexes $\Delta$, where $h=\max_{\Delta \in \mathcal{T}_h} \mbox{diam$(\Delta)$}$. Consider the finite element space
\[V^h:=\{v^h\in W^{1,\infty}_0(\overline{\Omega}) : v^h|_{\Delta}\ \text{is affine for all}\ \Delta \in \mathcal{T}_h\}.\]
By $V^h_m$ we denote the space of $m$-component vector-valued functions, each of whose components lies in $V^h$. We shall assume that $\{\mathcal{T}_h\}_{h \in (0,h_0]}$ is such that the $L^2(\Omega;\R{m})$ orthogonal projector $P^h$ onto $V^h_m$ is stable in $W^{1,\hat{q}}_0(\Omega;\R{m})$; i.e., there exists a positive constant $C$, independent of $h \in (0,h_0]$, such that
\begin{align}\label{e:stab}
\|D(P^h\varphi) \|_{L^{\hat{q}}(\Omega;\R{m\times n})} \leq C \|D \varphi\|_{L^{\hat{q}}(\Omega;\R{m \times n})}\qquad \forall\, \varphi \in W^{1,\hat{q}}_0(\Omega;\R{m}), \; \mbox{where $\hat{q}:= \max\{q,2\}$}.
\end{align}
On globally quasiuniform subdivisions \eqref{e:stab} is a consequence of a global inverse inequality and inequality (7) in \cite{DDW}; we note, however, that the stability inequality \eqref{e:stab} is in fact valid under less restrictive assumptions on the subdivision than quasiuniformity (see, for example, \cite{CT}).

Let $\{\phi^h_i\}_{i=1}^{\mathcal{N}(h)}$ be a basis for $V^h_m$. We shall therefore seek an approximate solution $u_h \in V^h_m$ in the form
\[u^h(t,x)=\sum_{i=1}^{\mathcal{N}(h)}\alpha^h_i(t)\phi^h_i(x),\qquad t \in [0,T], \quad x \in \overline\Omega,\]
satisfying
\begin{subequations}
\begin{align}\label{Galeq}
( \partial_tu^h(t),v^h) + ( a(Du^h(t)),Dv^h)+ (Bu^h,v^h)= ( F,v^h)
\qquad \forall\, v^h \in V^h_m,
\end{align}
for all $t\in[0,T]$, and
\begin{align}\label{Galeqini}
u^h(0)=u^h_0,
\end{align}
\end{subequations}
with $u^h_0\in V^h_m$ and $u^h_{0}\rightarrow u_0$ (strongly) in $L^2(\Omega;\R{m})$ as $h \rightarrow 0_+$. The system given by \eqref{Galeq} is a system of ODEs for the coefficients $\alpha^h_i$, $i=1,\dots, \mathcal{N}(h)$, and a solution to \eqref{Galeq}, \eqref{Galeqini} exists locally by Peano's Theorem on some interval $[0,T_h) \subset [0,T]$, thanks to the assumed (local Lipschitz) continuity of $a$. Our existence theorem below is based on proving that $u^h$ can be extended to
the final time $T>0$ for each $h \in (0,h_0]$, and that as $h \rightarrow 0_+$ a subsequence of the sequence of approximate solutions converges, in a sense to be made precise, to a Young measure solution of the problem.

\begin{theorem}\label{YMS}
Suppose that $\Omega\subset\R{n}$ is a Lipschitz polytope, let $u_0\in L^2(\Omega;\R{m})$, assume that
\[ p:=\min_{i=1,\ldots, m}p_i>\max\bigg\{\frac{2n}{n+2},1\bigg\}\quad \mbox{and}\quad q:=\max_{i=1,\ldots,m}p_i\]
satisfy $q-p<1$, let $F\in L^{p'}\!(Q_T;\R{m})\cap L^{2}(Q_T;\R{m})$, and suppose that $a:\R{m \times n}\rightarrow\R{m \times n}$ is a locally Lipschitz mapping satisfying the assumptions \eqref{modelstructure}--\eqref{modelconstants}. Then, there exists a Young measure solution $(u,\nu)$ of the problem \eqref{modelprob1} with data given by \eqref{modelinitdata} and \eqref{modelboundarydata}. Furthermore, there exists a subsequence (not indicated) of solutions $\{u^h\}$ to the semidiscrete problem \eqref{Galeq}, \eqref{Galeqini} such that
\[u^h\overset{\ast}{\rightharpoonup}u\ \text{in}\ L^\infty(0,T;L^2(\Omega;\R{m})),\qquad
Du^h_i \rightharpoonup Du_i \ \text{in}\ L^{p_i}(0,T; L^{p_i}(\Omega;\R{m\times n})),\quad i=1,\dots,m.
\]
\[a(Du^h)\rightharpoonup \langle \nu,a\rangle \ \text{in}\ L^{\hat{q}'}\!(Q_T;\R{m \times n}),
\qquad \partial_tu^h\rightharpoonup\partial_tu\ \text{in}\ L^{{\hat{q}}'}\!(0,T;W^{-1,{\hat{q}}'}\!(\Omega;\R{m})),
\quad\mbox{where $\hat{q}:=\max\{q,2\}$},\]
\[u^h \rightarrow u \ \text{in} \ C_w([0,T];L^2(\Omega;\R{m})).\]
\end{theorem}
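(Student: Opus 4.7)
I would first test \eqref{Galeq} against $v^h=u^h(t)$ and use \eqref{b-positive} together with the coercivity of $a$ furnished by the lower bound in \eqref{modelgrowth} (which gives $a(Du^h):Du^h\ge c_0\sum_i|Du^h_i|^{p_i}$ up to constants absorbable by $|u^h|^2$ terms on the left), plus a Young inequality on $(F,u^h)$ followed by integration in time; this produces
\[
\|u^h\|_{L^\infty(0,T;L^2(\Omega;\R{m}))}+\sum_{i=1}^m\|Du^h_i\|_{L^{p_i}(Q_T;\R{m\times n})}\le C,
\]
independently of $h$, which in particular keeps the ODE coefficients bounded and extends the local Peano solution to all of $[0,T]$. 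Using the upper bound in \eqref{modelgrowth} one arrives, row by row, at a pointwise estimate of the form $|a(A)|\le C(1+\sum_i|A_i|^{p_i-1}+\sum_{i,j,\,p_j\ge 2}|A_i||A_j|^{p_j-2})$; the cross terms are dominated by $C(1+|A_i|^{p_i}+|A_j|^{p_j})$ via Young's inequality with exponents $p_i$ and $p_i'$, the inequality $(p_j-2)p_i/(p_i-1)\le p_j$ required for the exponents to close being equivalent to $p_j\le 2p_i$ and ensured by $q-p<1$ combined with $p>1$. Together with the elementary exponent inequality $(p_i-1)\hat q'\le p_i$ (equivalent to $p_i\le\hat q$, which holds since $p_i\le q\le\hat q$), this yields $\|a(Du^h)\|_{L^{\hat q'}(Q_T;\R{m\times n})}\le C$. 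Finally, inserting $v^h=P^h\varphi(t)$ into \eqref{Galeq} for an arbitrary $\varphi\in L^{\hat q}(0,T;W^{1,\hat q}_0(\Omega;\R{m}))$, using the identity $(\partial_tu^h,\varphi)=(\partial_tu^h,P^h\varphi)$ valid because $\partial_tu^h\in V^h_m$, and invoking the stability \eqref{e:stab} of $P^h$, I would deduce
\[
\|\partial_tu^h\|_{L^{\hat q'}(0,T;W^{-1,\hat q'}(\Omega;\R{m}))}\le C.
\]

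\textbf{Extraction of limits and the Young measure.} By Banach--Alaoglu and reflexivity one extracts a subsequence with $u^h\overset{\ast}{\rightharpoonup}u$ in $L^\infty(0,T;L^2)$, $Du^h_i\rightharpoonup Du_i$ in $L^{p_i}(Q_T)$, $\partial_tu^h\rightharpoonup\partial_tu$ in $L^{\hat q'}(0,T;W^{-1,\hat q'})$, and $a(Du^h)\rightharpoonup\chi$ in $L^{\hat q'}(Q_T;\R{m\times n})$ for some $\chi$. Applying Theorem \ref{FTYM} together with Remark \ref{rem:1} to $\{Du^h\}$ (bounded in $L^p(Q_T;\R{m\times n})$) with the continuous function $a$ of polynomial growth of exponent $s=q-1$, which satisfies $s<p$ by $q-p<1$, I obtain a Young measure $\nu_{t,x}$ such that $a(Du^h)\rightharpoonup\langle\nu,a\rangle$ in $L^r(Q_T)$ for some $r\in(1,p/(q-1))$; uniqueness of distributional limits in $L^r\cap L^{\hat q'}$ then forces $\chi=\langle\nu,a\rangle$ almost everywhere on $Q_T$. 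Lemma \ref{ArzAscApp} applied with $s=\hat q$ upgrades the $L^\infty_t L^2_x$ weak-$\ast$ convergence to $u^h\to u$ in $C_w([0,T];L^2(\Omega;\R{m}))$.

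\textbf{Passing to the limit and initial datum.} For an arbitrary test function $\phi\in L^{\hat q}(0,T;W^{1,\hat q}_0(\Omega;\R{m}))$, I would insert $v^h=P^h\phi(t)$ into \eqref{Galeq} and integrate in time. The stability \eqref{e:stab} combined with the density of smooth compactly supported functions yields $P^h\phi\to\phi$ strongly in $L^{\hat q}(0,T;W^{1,\hat q}_0)$, and the weak convergences established in the previous paragraph then allow me to pass to the limit term by term as $h\to 0_+$, producing exactly the identity in Definition \ref{measuresol}. The initial condition is recovered by combining the strong convergence $u^h_0\to u_0$ in $L^2$ with $u^h\to u$ in $C_w([0,T];L^2)$ evaluated at $t=0$. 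The main obstacle is the identification of the weak limit of the nonlinear term $a(Du^h)$: because $a$ is neither monotone nor a gradient, no Minty-type or div--curl argument is available to show $\chi=a(Du)$, which is precisely the structural reason why the solution concept has to be weakened to the Young-measure setting. The restriction $q-p<1$ enters the argument in two essential places, namely in closing the Young-type inequality that secures the $L^{\hat q'}$ bound on $a(Du^h)$, and in ensuring $p/(q-1)>1$ so that Remark \ref{rem:1} applies to produce the weak convergence $a(Du^h)\rightharpoonup\langle\nu,a\rangle$.
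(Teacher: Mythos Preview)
Your overall strategy---energy estimates, global extension of the Peano solution, extraction of weak limits, application of Theorem~\ref{FTYM} and Remark~\ref{rem:1} to identify $\chi=\langle\nu,a\rangle$, and the use of Lemma~\ref{ArzAscApp} for the $C_w$ convergence and the initial datum---matches the paper's route closely. The one place where your argument does not close is the derivation of the bound $\|a(Du^h)\|_{L^{\hat q'}(Q_T)}\le C$.

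You dominate the cross terms $|A_i|\,|A_j|^{p_j-2}$ pointwise by $C(1+|A_i|^{p_i}+|A_j|^{p_j})$ via Young's inequality, but the resulting quantities are only in $L^1(Q_T)$, not in $L^{\hat q'}(Q_T)$; raising the pointwise bound to the $\hat q'$ power does not yield an integrable majorant. Nor can a direct H\"older splitting $\||A_i||A_j|^{p_j-2}\|_{L^{\hat q'}}\le\||A_i|\|_{L^a}\||A_j|^{p_j-2}\|_{L^b}$ succeed using only the information $Du^h_i\in L^{p_i}$: the exponent balance forces $\tfrac{1}{p_i}+\tfrac{1}{\hat q}\le\tfrac{2}{p_j}$, which fails for $p_i=p<q=p_j$ even under $q-p<1$. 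A concrete obstruction is $p_1=2$, $p_2=\tfrac52$: here $\hat q'=\tfrac53$ and $(|A_1||A_2|^{1/2})^{5/3}=|A_1|^{5/3}|A_2|^{5/6}$ cannot be controlled in $L^1$ from $|A_1|^2,|A_2|^{5/2}\in L^1$ alone.

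What the paper does instead is to retain, alongside the plain energy bound $\sum_i\|Du^h_i\|_{L^{p_i}}^{p_i}\le C$, the \emph{weighted} energy estimate
\[
\int_0^T\!\!\int_\Omega\Big[\sum_{i}(\mu_i^2+|Du^h_i|^2)^{\frac{p_i-2}{2}}\Big]\,|Du^h|^2\,\mathrm{d}x\,\mathrm{d}t\le C
\]
(which is \eqref{modelenergy-a}, obtained before one discards the weight to reach \eqref{modelenergy-b}). For $p_i\ge2$ and $i\ne k$, H\"older's inequality with exponent $2/p_i'$ then gives
\[
\int\!\Big[(\mu_i^2+|Du^h_i|^2)^{\frac{p_i-2}{2}}|Du^h_k|\Big]^{p_i'}
\le\Big(\int(\mu_i^2+|Du^h_i|^2)^{\frac{p_i-2}{2}}|Du^h_k|^2\Big)^{\!p_i'/2}
\Big(\int(\mu_i^2+|Du^h_i|^2)^{p_i/2}\Big)^{\!1-p_i'/2},
\]
and both factors are controlled by the two energy estimates. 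For $1<p_i<2$ the factor $(\mu_i^2+|Du^h_i|^2)^{(p_i-2)/2}\le|\mu_i|^{p_i-2}$ is simply bounded, and the weighted estimate again controls the remaining $|Du^h_k|^2$. It is precisely this mixed, integral-level use of \eqref{modelenergy-a} that your pointwise argument misses; once you insert it, the rest of your proposal goes through as in the paper.
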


\begin{proof}
As was noted above, \eqref{Galeq}, \eqref{Galeqini} is an initial-value problem for a system of ODEs for the coefficients $\alpha^h_i$, and a solution exists locally by Peano's Theorem on some interval $[0,T_h) \subset [0,T]$, thanks to the assumed (local Lipschitz) continuity of $a$. We begin by showing that we can extend the numerical solution $u^h \in V^h_m$, defined on $[0,T_h) \times \overline\Omega$ up to time $T$ for all $h \in (0,h_0]$, so that it is defined on the whole of $[0,T]\times \overline\Omega$. To this end, we take $v^h = u^h$ in \eqref{Galeq}; hence,
\begin{equation}\label{modelpreenergy}
\int_0^{\tau}\int_\Omega\partial_tu^h\cdot u^h+K(Du^h)|Du^h|^2
+Bu^h\cdot u^h{\dd}x{\dd}t=\int_0^{\tau}\int_\Omega F\cdot u^h{\dd}x{\dd}t\qquad \forall\, \tau \in (0,T_h),
\end{equation}
which, by using \eqref{b-positive} and \eqref{modelgrowth} on the left-hand side and H\"older's inequality on the right-hand side, yields
\begin{align}\label{modelenergy0}
\begin{split}
&\|u^h(\tau,\cdot)\|_{L^2(\Omega;\R{m})}^2+\int_0^{\tau}\int_\Omega
\left[\sum_{i=1}^m(\mu_i^2+|Du_i^h|^2)^\frac{p_i-2}{2}\right]|Du^h|^2{\dd}x{\dd}t\\
&\qquad \leq c \left[\|u_0^h\|_{L^2(\Omega;\R{m})}^2
+ \left(\int_0^{T}\|F\|^{p'}_{L^{p'}\!(\Omega;\R{m})}  {\dd}t\right)^{\frac{1}{p'}} \left(\int_0^{\tau}\|u^h\|_{L^{p}(\Omega;\R{m})}^p {\dd}t\right)^{\frac{1}{p}}
\right]\qquad \forall\, \tau \in (0,T_h).
\end{split}
\end{align}
In order to bound the second term on the left-hand side of \eqref{modelenergy0} from below, we note that
\[ |Du_i^h|^{p_i} \leq (\mu_i^2+|Du_i^h|^2)^\frac{p_i-2}{2} |Du^h|^2\qquad \mbox{if $p_i \geq 2$},\]
and the following inequality holds for all $s \in [0,\infty)$ if $1<p_i <2$  and $\mu_i \neq 0$:
\[ s^{p_i} \leq
\left\{
\begin{array}{ll}
    2^{1-\frac{p_i}{2}}(\mu^2_i + s^2)^{\frac{p_i -2}{2}}s^2 & \mbox{for $s \geq |\mu_i|$}, \\
    (\mu_i^2 + s^2)^{\frac{p_i -2}{2}}s^2  + |\mu_i|^{p_i} & \mbox{for $0 \leq s \leq |\mu_i|$}.
\end{array}
 \right.\]
Using these, we deduce that
\begin{align*}
&\|u^h(\tau,\cdot)\|_{L^2(\Omega;\R{m})}^2+\int_0^{\tau}\int_\Omega \sum_{i=1}^m
|Du_i^h|^{p_i}{\dd}x{\dd}t\\
&\qquad \leq c \left[1 + \|u_0^h\|_{L^2(\Omega;\R{m})}^2
+ \left(\int_0^{T}\|F\|^{p'}_{L^{p'}\!(\Omega;\R{m})}  {\dd}t\right)^{\frac{1}{p'}} \left(\int_0^{\tau}\|u^h\|_{L^{p}(\Omega;\R{m})}^p {\dd}t\right)^{\frac{1}{p}}
\right]\qquad \forall\, \tau \in (0,T_h).
\end{align*}
Thus, using Young's inequality and Poincar\'e's inequality in $W^{1,p}_0(\Omega;\R{m})$ in order to absorb the final
factor on the right-hand side into the second term on the left-hand side, we arrive at the following energy inequality:
\begin{align}\label{modelenergy}
\sup_{t\in[0,T_h)}\|u^h(t,\cdot)\|_{L^2(\Omega;\R{m})}^2+\int_0^{T_h}\int_\Omega
\sum_{i=1}^m|Du_i^h|^{p_i}{\rm d}x{\dd}t\leq c\big(1+\|u_0^h\|_{L^2(\Omega;\R{m})}^2+\|F\|_{L^{p'}\!(Q_{T};\R{m})}^{p'}\big),
\end{align}
which provides uniform bounds on the sequence $\{u^h\}$, thanks to the strong convergence of $u_0^h\rightarrow u_0$ in $L^2(\Omega;\R{m})$ as $h \rightarrow 0_+$, which implies in particular that $\|u_0^h\|_{L^2(\Omega;\R{m})}$ is uniformly bounded in $h$. Hence, the right-hand side of \eqref{modelenergy} is bounded, independent of $h \in (0,h_0]$. This means that $T_h$ cannot be the maximal existence time, and $u^h$ can be therefore extended from $[0,T_h)$ beyond $T_h$ to the whole of the time interval $[0,T]$ for all $h \in (0,h_0]$. Hence,
\begin{align}\label{modelenergy-a}
\begin{split}
\sup_{t\in[0,T)}\|u^h(t,\cdot)\|_{L^2(\Omega;\R{m})}^2+\int_0^{T}\int_\Omega
\left[\sum_{i=1}^m(\mu_i^2+|Du_i^h|^2)^\frac{p_i-2}{2}\right]|Du^h|^2{\dd}x{\dd}t\\
\leq c\big(1 + \|u_0\|_{L^2(\Omega;\R{m})}^2 +\|F\|_{L^{p'}\!(Q_{T};\R{m})}^{p'}\big)
\end{split}
\end{align}
and also
\begin{align}\label{modelenergy-b}
\sup_{t\in[0,T)}\|u^h(t,\cdot)\|_{L^2(\Omega;\R{m})}^2+\int_0^{T}\int_\Omega \sum_{i=1}^m|Du_i^h|^{p_i}{\rm d}x{\dd}t\leq c\big(1+\|u_0\|_{L^2(\Omega;\R{m})}^2+\|F\|_{L^{p'}\!(Q_{T};\R{m})}^{p'}\big).
\end{align}
The uniform bound \eqref{modelenergy-b} implies the existence of a function
\[ u \in L^\infty(0,T;L^2(\Omega;\R{m}))\cap L^p(0,T;W^{1,p}_0(\Omega;\R{m})) \quad\mbox{with}\quad Du_i \in L^{p_i}(0,T;L^{p_i}(\Omega;\R{m\times n})),\quad i=1,\dots,m,\]
for which, up to a subsequence, we have
\begin{equation}\label{modelweakconvergence1}
u^h\overset{\ast}{\rightharpoonup}u\ \text{in}\ L^\infty(0,T;L^2(\Omega;\R{m}) )
\end{equation}
and
\begin{equation}\label{modelweakconvergence2}
Du^h_i \rightharpoonup Du_i \ \text{in}\ L^{p_i}(0,T; L^{p_i}(\Omega;\R{m\times n})),\quad i=1,\dots,m.
\end{equation}
Furthermore, thanks to \eqref{modelgrowth} and because the Euclidean norm of the vector $(|Du_1^h|,\ldots,|Du_m^h|)^{\rm T}$ is bounded by its 1-norm, we have the following bound:
\begin{align}\label{a-bound}
|a(Du^h)|&\leq c_1 \sum_{i=1}^m(\mu_i^2+|Du_i^h|^2)^\frac{p_i-2}{2}\left(\sum_{k=1}^m|Du_k^h|^2\right)^{\frac{1}{2}}
\leq c_1 \sum_{i=1}^m\sum_{k=1}^m(\mu_i^2+|Du_i^h|^2)^\frac{p_i-2}{2}|Du_k^h|.
\end{align}
Now we seek to bound each of the terms appearing in the last sum in \eqref{a-bound}. Clearly for the case $i=k$ we have an expression of the form $(\mu_i^2+|Du_i^h|^2)^\frac{p_i-1}{2}$ appearing, which is bounded in $L^{p_i'}\!(Q_T)$, uniformly with respect to $h \in (0,h_0]$, by the energy estimate \eqref{modelenergy-b}. In the case $i\neq k$ we proceed by using H\"older's inequality together with \eqref{modelenergy-a} and \eqref{modelenergy-b}, our objective being to bound all of these terms in $L^{p_i'}\!(Q_T)$ if $p_i\geq 2$ or in $L^{2}(Q_T)$ if $1<p_i<2$. We begin with the terms for which $p_i\geq2$:
\begin{align*}
&\int_0^T\int_\Omega \bigg[(\mu_i^2+|Du_i^h|^2)^\frac{p_i-2}{2}|Du_k^h|\bigg]^{p_i'}{\dd}x{\dd}t
=\int_0^T\int_\Omega (\mu_i^2+|Du_i^h|^2)^{\frac{p_i'(p_i-2)}{4}}|Du_k^h|^{p_i'}
\,(\mu_i^2+|Du_i^h|^2)^{\frac{p_i'(p_i-2)}{4}}{\dd}x{\dd}t\\
&\qquad \leq \left(\int_0^T\int_\Omega(\mu_i^2+|Du_i^h|^2)^\frac{p_i-2}{2}|Du_k^h|^2{\dd}x\  {\rm d}t\right)^\frac{p_i'}{2}\,\left(\int_0^T\int_\Omega(\mu_i^2+|Du_i^h|^2)^\frac{p_i}{2}{\dd}x\  {\rm d}t\right)^{1-\frac{p_i'}{2}}\\
&\qquad \leq c\left(1+\|u_0\|_{L^2(\Omega;\R{m})}^2+\|F\|_{L^{p'}\!(Q_T;\R{m})}^{p'}\right)^{\frac{p_i'}{2}} \left(1 + \|u_0\|_{L^{2}(\Omega;\R{m})}^2
+ \|F\|^{p'}_{L^{p'}\!(Q_T;\R{m})}\right)^{1-\frac{p_i'}{2}}.
\end{align*}
Now for the terms in which $1<p_i<2$, thanks to \eqref{modelenergy-a}, we have
\begin{align*}
\int_0^T\int_\Omega& \bigg[(\mu_i^2+|Du_i^h|^2)^\frac{p_i-2}{2}|Du_k^h|\bigg]^{2}{\dd}x{\dd}t\\
&=\int_0^T\int_\Omega (\mu_i^2+|Du_i^h|^2)^\frac{p_i-2}{2}(\mu_i^2+|Du_i^h|^2)^\frac{p_i-2}{2}|Du_k^h|^2{\dd}x{\dd}t\\
&\leq|\mu_i|^{p_i-2} \int_0^T\int_\Omega (\mu_i^2+|Du_i^h|^2)^{\frac{p_i-2}{2}}|Du_k^h|^{2}{\dd}x{\dd}t\\
&\leq c|\mu_i|^{p_i-2}(1+\|u_0\|_{L^2(\Omega;\R{m})}^2+\|F\|_{L^{p'}\!(Q_T;\R{m})}^{p'}).
\end{align*}
Thus we have bounded each term in the sum appearing on the right-hand side of \eqref{a-bound} in $L^{p_i'}\!(Q_T)$ (if $p_i\geq2$) or in $L^{2}(Q_T)$ (if $1<p_i<2$), uniformly with respect to $h \in (0,h_0]$, and therefore $\{a(Du^h)\}_{0<h\leq h_0}$ is a bounded sequence in $L^{{\hat{q}}'}\!(Q_T;\R{m \times n})$. More precisely, there exists a positive constant $C$, independent of $h$, such that
\begin{align}\label{e:au-bound}
\|a(Du^h)\|_{L^{{\hat{q}}'}\!(Q_T;\R{m \times n})} \leq C\qquad \forall\, h \in (0,h_0],
\end{align}
where we recall that $\hat{q}=\max\{q,2\}$, whereby $\hat{q}'= \min\{q',2\}$. Therefore, there is a subsequence (still indexed only by $h$) and a $\chi\in L^{{\hat{q}}'}\!(Q_T;\R{m \times n})$ such that
\begin{align}\label{a-limit}
 a(Du^h)\rightharpoonup\chi\qquad \mbox{in $L^{{\hat{q}}'}\!(Q_T;\R{m \times n})$}.
\end{align}

Using similar estimates to those above we can also show that there exists a positive constant $C$, independent of $h$, such that
\begin{equation}
\label{modeltimederivative}
\|\partial_tu^h\|_{L^{{\hat{q}}'}\!(0,T; W^{-1,{\hat{q}}'}\!(\Omega;\R{m}))} \leq C\qquad \forall\, h \in (0,h_0].
\end{equation}
Indeed, for each $\varphi \in L^{\hat{q}}(0,T;W^{1,{\hat{q}}}_0(\Omega;\R{m}))$, by \eqref{Galeq}, \eqref{e:stab} and \eqref{e:au-bound}, we have that
\begin{align*}
&\int_0^T (\partial_tu^h, \varphi ) \dd t = \int_0^T ( \partial_tu^h, P^h\varphi ) \dd t = - \int_0^T ( a(Du^h(t)),DP^h\varphi) \dd t + \int_0^T ( F - Bu^h ,P^h\varphi ) \dd t\\
&\qquad\leq C\|a(Du^h)\|_{L^{{\hat{q}}'}\!(Q_T;\R{m \times n})} \|D\varphi\|_{L^{\hat{q}}(Q_T;\R{m\times n})} + \bigg(\|F\|_{L^2(Q_T;\R{m})} + |B| \|u^h\|_{L^2(Q_T;\R{m})}\bigg) \|\varphi\|_{L^2(Q_T;\R{m})}\\
&\qquad\leq C (\|D\varphi\|_{L^{\hat{q}}(Q_T;\R{m\times n})} + \|\varphi\|_{L^{\hat{q}}(Q_T;\R{m})})  \leq  C \|\varphi\|_{L^{\hat{q}}(0,T;W^{1,{\hat{q}}}(\Omega;\R{m}))},
\end{align*}
and then \eqref{modeltimederivative} follows by noting that the dual space of $L^{\hat{q}}(0,T;W^{1,{\hat{q}}}_0(\Omega;\R{m}))$ is $L^{\hat{q}'}\!(0,T;W^{-1,{\hat{q}}'}\!(\Omega;\R{m}))$.

Now let $\psi\in L^{\hat{q}}(0,T; W_0^{1,{\hat{q}}}(\Omega;\R{m}))$ be the strong limit, as $h \rightarrow 0_+$ and $l \rightarrow \infty$, of the sequence of functions
\[\psi^{\mathcal{N}(h),l}(t,x)=\sum_{i=1}^{\mathcal{N}(h)}\beta^l_i(t)\phi^h_i(x)\]
in the $L^{\hat{q}}(0,T; W_0^{1,{\hat{q}}}(\Omega;\R{m}))$ norm, with $\beta^l_i\in C([0,T])$ and $\phi^h_i\in V^h_m$, $i=1,\dots, \mathcal{N}(h)$, $l=1,2,\dots$; such a limit necessarily exists by density. We then have that
\begin{align*}
&\int_0^T\int_{\Omega}\left[\partial_tu^h\cdot \psi+a(Du^h) : D\psi+Bu^h\cdot\psi-F\cdot\psi\right]{\dd}x{\dd}t\\
&= \int_0^T\int_{\Omega}\partial_tu^h\cdot \psi^{\mathcal{N}(h),l}+a(Du^h) :  D\psi^{\mathcal{N}(h),l}+Bu^h\cdot\psi^{\mathcal{N}(h),l}-F\cdot\psi^{\mathcal{N}(h),l}{\dd}x{\dd}t\\
&\ \ \ \ +\int_0^T\int_{\Omega}\bigg[\partial_tu^h\cdot (\psi-\psi^{\mathcal{N}(h),l})+a(Du^h) : (D\psi-D\psi^{\mathcal{N}(h),l})
+Bu^h\cdot(\psi-\psi^{\mathcal{N}(h),l})-F\cdot(\psi-\psi^{\mathcal{N}(h),l})\bigg]{\dd}x{\dd}t\\
&\leq \|\partial_tu^h\|_{L^{{\hat{q}}'}\!(0,T;W^{-1,{\hat{q}}'}\!(\Omega;\R{m}))}
\|\psi-\psi^{\mathcal{N}(h),l}\|_{L^{\hat{q}}(0,T;W_0^{1,{\hat{q}}}(\Omega;\R{m}))}\\
&\ \ \ \ +\|a(Du^h)\|_{L^{{\hat{q}}'}\!(0,T;L^{{\hat{q}}'}\!(\Omega;\R{m \times n}))}\|D\psi-D\psi^{\mathcal{N}(h),l}\|_{L^{{\hat{q}}}(0,T;L^{{\hat{q}}}(\Omega;\R{m \times n}))}\\
&\ \ \ \ +\|F\|_{L^{p'}\!(Q_T;\R{m})}\|\psi-\psi^{\mathcal{N}(h),l}\|_{L^{\hat{q}}(0,T;W_0^{1,{\hat{q}}}(\Omega;\R{m}))}
+c\|u^h\|_{L^{{\hat{q}}'}\!(Q_T;\R{m})}
\|\psi-\psi^{\mathcal{N}(h),l}\|_{L^{\hat{q}}(0,T;W_0^{1,{\hat{q}}}(\Omega;\R{m}))}\\
&\leq c(\|u_0\|_{L^2(\Omega;\R{m})},\mu_i, T, |\Omega|,n,F)\,(\|\psi-\psi^{\mathcal{N}(h),l}\|_{L^{\hat{q}}(0,T;W_0^{1,{\hat{q}}}(\Omega;\R{m}))}
+\|D\psi-D\psi^{\mathcal{N}(h),l}\|_{L^{{\hat{q}}}(0,T;L^{{\hat{q}}}(\Omega;\R{m \times n}))}).
\end{align*}
The right-hand side of this inequality converges to zero in the limit as $h\rightarrow0_+$ (and therefore $\mathcal{N}(h)\rightarrow\infty$) and $l \rightarrow \infty$. Thus we have that
\begin{align}\label{limit-h}
\lim_{h \rightarrow 0_+}\int_0^T\int_{\Omega}\partial_tu^h\cdot \psi+a(Du^h) : D\psi + Bu^h\cdot\psi{\dd}x{\dd}t
=\int_0^T\int_\Omega F\cdot \psi{\dd}x{\dd}t,
\end{align}
for all $\psi\in L^{\hat{q}}(0,T; W_0^{1,{\hat{q}}}(\Omega;\R{m}))$.

The uniform in $h$ bounds established in \eqref{modeltimederivative} and \eqref{modelenergy-b} now allow passage to the limit in the first and third term on the left-hand side of \eqref{limit-h}: by the uniform in $h$ bound on $\|u^h\|_{L^\infty(0,T;L^2(\Omega;\R{m}))}$, there exists a subsequence (still indexed by $h$) which converges weakly-$\ast$ to $u$ in $L^\infty(0,T;L^2(\Omega;\R{m}))$. This enables us to also pass to the limit in the third term of the integrand on the left-hand side of \eqref{limit-h}. In addition, thanks to \eqref{modeltimederivative}, one can extract a further subsequence (which we continue to index by $h$) for which, by the uniqueness of the weak limit,
\begin{align}\label{timeconvw}
\partial_tu^h\rightharpoonup\partial_tu\quad \text{in}\quad L^{{\hat{q}}'}\!(0,T;W^{-1,{\hat{q}}'}\!(\Omega;\R{m})),
\end{align}
meaning that
\begin{equation*}
\lim_{h\rightarrow0_+}
\int_0^T (\partial_tu^h, \psi) {\dd}t=\int_{0}^T \langle \partial_tu,\psi\rangle{\dd}t,
\end{equation*}
for all $\psi\in L^{\hat{q}}(0,T;W^{1,{\hat{q}}}_0(\Omega;\R{m}))$,
where $\langle \cdot,\cdot \rangle$ is the duality pairing between
$W^{-1,{\hat{q}}'}\!(\Omega;\R{m})$ and $W^{1,{\hat{q}}}_0(\Omega;\R{m})$. Hence,
\begin{align}\label{limit-h1}
\int_0^T\langle \partial_tu , \psi \rangle \dd t +\lim_{h \rightarrow 0_+}\int_0^T\int_{\Omega}a(Du^h) : D\psi
+ \int_0^T\int_{\Omega} Bu\cdot\psi{\dd}x{\dd}t
=\int_0^T\int_\Omega F\cdot \psi{\dd}x{\dd}t,
\end{align}
for all $\psi\in L^{\hat{q}}(0,T;W^{1,{\hat{q}}}_0(\Omega;\R{m}))$. The final convergence result in the statement of the theorem, that $u^h \rightarrow u$ in $C_w([0,T];L^2(\Omega;\R{m}))$, is a direct consequence of \eqref{modelweakconvergence1}, \eqref{timeconvw} and Lemma \ref{ArzAscApp}.

In order to pass to the limit in the second term in \eqref{limit-h1}, we shall invoke Theorem \ref{FTYM}. To this end, we begin by verifying that the subsequence $\{Du^{h_k}\}$ satisfies the tightness condition from Theorem \ref{FTYM}; i.e., that
\begin{equation}\label{eq:tight}
\lim_{s\rightarrow\infty}\sup_{k}|\{(t,x)\in([0,T]\times\Omega)\cap Q_R(0):\ |Du^{h_k}(t,x)|\geq s\}|=0,
\end{equation}
for every $R>0$. In order to show this we set
\[A_{k,s,R}:=\{(t,x)\in([0,T]\times\Omega)\cap Q_R(0):\ |Du^{h_k}(t,x)|\geq s\},\]
and apply Chebyshev's inequality:
\begin{align*}|A_{k,s,R}|&\leq\frac{1}{s^p}\int_{A_{k,s,R}}|Du^{h_k}|^p{\dd}x\\
&\leq\frac{c(1+ ||u_0||_{L^2(\Omega;\R{m})}^2 + \|F\|^{p'}_{L^{p'}\!(Q_T;\R{m})})}{s^p},
\end{align*}
where we have used \eqref{modelenergy-b} along with the fact that for $r_1\geq r_2$, we have $|s|^{r_2}\leq 1+|s|^{r_1}$ to transition from the first line to the second. The resulting bound is independent of $k$ (and $h_k$) and $R$, and so by passing $s\rightarrow\infty$ we deduce that \eqref{eq:tight} holds.

By taking $\mathfrak{K}=\R{m}$ (see Remark \ref{rem:2} following the statement of Theorem \ref{FTYM}), we may apply Theorem \ref{FTYM}, to deduce the existence of a parameterized family of Young measures $(\nu_{t,x})_{(t,x) \in Q_T}$ and a further subsequence (which we still index by $h$) for which
\begin{align}\label{a-limit-a}
a(Du^h)\rightharpoonup \langle \nu,a\rangle \qquad \text{in}\ L^\alpha(Q_T;\R{m \times n}),
\end{align}
where $1<\alpha<\frac{p}{q-1}$ (see Remark \ref{rem:1}, combined with the fact that $a(A)\leq c(1+|A|^{q-1})$, which follows from \eqref{modelgrowth} by recalling  that $q:=\max_{i=1,\ldots,m} p_i$, whence $a \in E_{p/\alpha}$). By comparing
\eqref{a-limit} and \eqref{a-limit-a} and noting the uniqueness of weak limits there is then a subsequence of $u^h$ (still indexed by $h$) such that
\begin{align}\label{a-lmit-b}
a(Du^h)\rightharpoonup\langle \nu,a\rangle\qquad \text{in}\ L^{\hat{q}'}\!(Q_T;\R{m}).
\end{align}
Here we have used that $\hat{q}' = \min\{q',2\} = \min\{\frac{q}{q-1},2\} \geq \min\{\frac{p}{q-1},2\}\geq \min\{\alpha,2\}$, with $\alpha$ as above, and noted that \eqref{a-limit-a} implies that $a(Du^h)\rightharpoonup \langle \nu ,a\rangle$ in $L^{\min\{\alpha,2\}}(Q_T;\R{m \times n})$ because $\alpha \geq {\min\{\alpha,2\}}$.

This allows us to complete the passage to the limit and obtain the existence of a pair $(u,\nu)$ such that
\[ u \in L^\infty(0,T;L^2(\Omega;\R{m}))\cap L^p(0,T;W^{1,p}_0(\Omega;\R{m})),\qquad Du_i  \in L^{p_i}(0,T;L^{p_i}(\Omega;\R{m\times n})),\quad i=1,\dots,m,\]
with
\[\partial_tu\in L^{\hat{q}'}\!(0,T; W^{-1,\hat{q}'}\!(\Omega;\R{m})), \quad  u \in C_w([0,T];L^2(\Omega;\R{m}))\quad \mbox{and}\quad \langle\nu ,a\rangle \in L^{\hat{q}'}\!(Q_T;\R{m\times n})\]
satisfying
\[\int_0^T\langle\partial_tu,\phi\rangle+\int_\Omega\langle \nu_{t,x},a\rangle : D\phi + Bu\cdot\phi{\dd}x{\dd}t=\int_0^T\int_\Omega F\cdot\phi{\dd}x{\dd}t\]
for all $\phi\in L^{\hat{q}}(0,T; W_0^{1,\hat{q}}(\Omega;\R{m}))$.

It remains to be shown that the initial condition is satisfied. By Lemma \ref{ArzAscApp} it follows that we can extract a further subsequence $h_l$ along which we have that
\begin{equation}\label{weakcontconv}
\int_\Omega u^{h_l}(t,x)\cdot\varphi(x){\dd}x\rightarrow\int_\Omega u(t,x)\cdot\varphi(x){\dd}x,
\end{equation}
uniformly in $C([0,T])$, for all $\varphi\in L^2(\Omega;\R{m})$. Let us decompose
\begin{align*}
\int_\Omega &(u_0(x)-u(0,x))\cdot\varphi(x){\dd}x\\
&=\int_\Omega(u_0(x)-u^{h_l}(0,x))\cdot\varphi(x){\dd}x+\int_\Omega (u^{h_l}(0,x)-u^{h_l}(t,x))\cdot\varphi(x){\dd}x\\
&\ \ \ \ +\int_\Omega (u^{h_l}(t,x)-u(t,x))\cdot\varphi(x){\dd}x+\int_\Omega (u(t,x)-u(0,x))\cdot\varphi(x){\dd}x
=: I_l + II_l(t) + III_l(t) + IV(t).
\end{align*}
The term $I_l$ converges to zero as $h_l\rightarrow0_+$ by the assumed strong convergence in $L^2(\Omega;\R{m})$ of the discretized initial condition $u_0^{h_l}$ to $u_0$, and the term $III_l(t)$ converges to zero, uniformly in $C([0,T])$ as $h_l\rightarrow0_+$ by \eqref{weakcontconv}. Hence, for any $\varepsilon>0$, there exists an $h_l^\ast=h_l^\ast(\varepsilon,\varphi) \in (0,h_0]$, such that
\[ \left|\int_\Omega(u_0(x)-u^{h_l^\ast}(0,x))\cdot\varphi(x){\dd}x\right|<\frac{1}{4}\varepsilon\quad\mbox{and}\quad
 \max_{t \in [0,T]}\left|\int_\Omega (u^{h_l^\ast}(t,x)-u(t,x))\cdot\varphi(x){\dd}x\right| < \frac{1}{4}\varepsilon.
\]
Consequently, for any $\varepsilon>0$, there exists an $h_l^\ast \in (0,h_0]$, such that
\begin{align*}
&\left|\int_\Omega (u_0(x)-u(0,x))\cdot\varphi(x){\dd}x\right|\\
&\qquad < \frac{1}{2}\varepsilon +\left|\int_\Omega (u^{h_l^\ast}(0,x)-u^{h_l^\ast}(t,x))\cdot\varphi(x){\dd}x\right|
+\left|\int_\Omega (u(t,x)-u(0,x))\cdot\varphi(x){\dd}x\right|.
\end{align*}
With $h_l^\ast$ fixed, we now pass to the limit $t\rightarrow0_+$  in this inequality. The term $II_l(t)$ converges to zero as $t\rightarrow0_+$ because $u^{h_l^\ast}$, as a solution to the Galerkin equation \eqref{Galeq}, is continuous and satisfies the discretized initial condition. The term $IV(t)$ converges to as $t \rightarrow 0_+$ since by Lemma \ref{ArzAscApp} we have that $u\in C_w([0,T];L^2(\Omega;\R{m}))$, the space of weakly continuous functions from the interval $[0,T]$ into $L^2(\Omega;\R{m})$.
Thus, by passing to the limit $t \rightarrow 0_+$ in the last inequality we deduce that, for any $\varphi\in L^2(\Omega;\R{m})$ and any $\varepsilon>0$,
\begin{align*}
\left|\int_\Omega (u_0(x)-u(0,x))\cdot\varphi(x){\dd}x\right| < \frac{1}{2}\varepsilon.
\end{align*}
Taking $\varphi(x) = u_0(x)-u(0,x)$ and letting $\varepsilon \rightarrow 0_+$ then yields that $u_0(x)=u(0,x)$ for almost every $x\in\Omega$.
\end{proof}

%

\section{The fully discrete scheme and its convergence analysis}\label{sec:4}

In the course of the proof of Theorem \ref{YMS} Peano's Theorem was applied in order to show that one can solve the Galerkin equation \eqref{Galeq}. We shall now employ an implicit Euler discretization scheme to numerically approximate the solution whose existence on $[0,T]$ is guaranteed by Peano's Theorem. Starting from our discretized initial condition $u_0^h\in V^h_m$, we inductively define $u_{i+1}^h$, $i=0,\dots, N-1$, as solutions to the following fully discrete problem.

Let $\Delta t = T/N$ where $N \geq 1$, and given $u_i^h\in V^h_m$, find $u_{i+1}^h \in V^h_m$ such that
\begin{equation}\label{timestep}
\int_\Omega \frac{u_{i+1}^h-u_i^h}{\Delta t}\cdot \phi^h+a(Du_{i+1}^h) : D\phi^h+Bu_{i+1}^h\cdot \phi^h{\dd}x=\int_\Omega F_{i+1}\cdot\phi^h{\dd}x\qquad \left\{\begin{array}{l}\mbox{for $i=0,\dots, N-1$},\\
\mbox{and for all $\phi^h\in V^h_m$}.
\end{array}\right.
\end{equation}
Here,
\[F_i(x):=\frac{1}{\Delta t}\int_{t_{i-1}}^{t_i}F(t,x){\dd}t,\]
and, letting $t_i:=i\Delta t$, we define on $Q_T$ the function
\[\tilde{F}_{\Delta t}(t,x):=F_i(x)\quad \text{for}\ t\in(t_{i-1},t_i],\quad i=1,\dots,N, \quad  x \in \Omega.\]

Given a function $u^h_i$, the fact that there exists a function $u^h_{i+1}$ satisfying \eqref{timestep} is standard, and can be shown using a simple consequence of Brouwer's Fixed Point Theorem (cf., for example, Corollary 1.1 on p.279 of \cite{GR}). We begin the convergence analysis with the following estimate, which is reminiscent of the energy estimate obtained in the proof of Theorem \ref{YMS}, for the semidiscrete problem considered there.

\begin{lemma}\label{discreteenergyestimate}
There exists a positive constant $c$, independent of $h$ and $\Delta t$,
such that the functions $u_i^h$, $i=1,\ldots,N$, satisfy, for all $h \in (0,h_0]$,
\begin{align}\label{uh-uniform}
\begin{aligned}
\max_{i=1,\ldots,N}\|u_i^h\|_{L^2(\Omega;\R{m})}^2+\sum_{i=1}^N\int_\Omega |u_i^h-u_{i-1}^h|^2{\dd}x + \Delta t\sum_{i=1}^N\int_\Omega a(Du_{i}^h) : Du_{i}^h{\dd}x\\
\leq c(\|u_0^h\|_{L^2(\Omega;\R{m})}^2+T\|F\|_{L^2(Q_T;\R{m})}^2).
\end{aligned}
\end{align}
We note here that $a(\xi) : \xi\geq 0$ for all $\xi \in \R{m\times n}$, and therefore each of the terms appearing on the left-hand side of the inequality \eqref{uh-uniform} is nonnegative. In addition, the expression on the right-hand side of \eqref{uh-uniform} is further bounded above by a constant that is independent of $h$ and $\Delta t$.
\end{lemma}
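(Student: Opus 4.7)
The plan is to test \eqref{timestep} with $\phi^h = u_{i+1}^h$, exploit the standard polarization identity to turn the time-difference term into a telescoping sum plus a dissipation term, discard the $B$-contribution using the sign condition \eqref{b-positive}, and then close the estimate with a discrete Gronwall argument.

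\textbf{Step 1 (test and polarize).} Choosing $\phi^h = u_{i+1}^h$ in \eqref{timestep} and multiplying through by $\Delta t$, I would use the pointwise identity
\[(u_{i+1}^h - u_i^h)\cdot u_{i+1}^h = \tfrac{1}{2}\bigl(|u_{i+1}^h|^2 - |u_i^h|^2 + |u_{i+1}^h - u_i^h|^2\bigr)\]
to rewrite the discrete time-derivative term, obtaining
\[\tfrac{1}{2}\bigl(\|u_{i+1}^h\|_{L^2}^2 - \|u_i^h\|_{L^2}^2 + \|u_{i+1}^h - u_i^h\|_{L^2}^2\bigr) + \Delta t\!\int_\Omega a(Du_{i+1}^h):Du_{i+1}^h\dd x + \Delta t\!\int_\Omega Bu_{i+1}^h\cdot u_{i+1}^h\dd x = \Delta t\!\int_\Omega F_{i+1}\cdot u_{i+1}^h\dd x.\]

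\textbf{Step 2 (sign conditions and Young).} The $B$-term is nonnegative by \eqref{b-positive}, and $a(Du_{i+1}^h):Du_{i+1}^h = K(Du_{i+1}^h)|Du_{i+1}^h|^2 \geq 0$ by the lower bound in \eqref{modelgrowth}; both can be kept on the left. For the right-hand side I would use Cauchy--Schwarz followed by Young's inequality in the form
\[\Delta t\!\int_\Omega F_{i+1}\cdot u_{i+1}^h\dd x \leq \tfrac{\Delta t}{2}\|F_{i+1}\|_{L^2(\Omega;\R{m})}^2 + \tfrac{\Delta t}{2}\|u_{i+1}^h\|_{L^2(\Omega;\R{m})}^2.\]

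\textbf{Step 3 (summation and $F$-bound).} Summing the resulting inequality from $i=0$ up to $i=k-1$ for arbitrary $k \in \{1,\ldots,N\}$ telescopes the first term and yields
\[\|u_k^h\|_{L^2}^2 + \sum_{i=1}^k \|u_i^h - u_{i-1}^h\|_{L^2}^2 + 2\Delta t\sum_{i=1}^k \int_\Omega a(Du_i^h):Du_i^h\dd x \leq \|u_0^h\|_{L^2}^2 + \Delta t\sum_{i=1}^k \|F_i\|_{L^2}^2 + \Delta t\sum_{i=1}^k \|u_i^h\|_{L^2}^2.\]
The $F$-sum is bounded by Jensen's inequality applied to the time average defining $F_i$:
\[\Delta t\|F_i\|_{L^2(\Omega;\R{m})}^2 = \Delta t\!\int_\Omega\!\left|\tfrac{1}{\Delta t}\!\int_{t_{i-1}}^{t_i}\! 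F\dd t\right|^2\!\dd x \leq \int_{t_{i-1}}^{t_i}\!\|F(t,\cdot)\|_{L^2(\Omega;\R{m})}^2\dd t,\]
so that $\Delta t\sum_{i=1}^N \|F_i\|_{L^2}^2 \leq \|F\|_{L^2(Q_T;\R{m})}^2$.

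\textbf{Step 4 (discrete Gronwall and conclusion).} The remaining obstacle is the presence of $\Delta t\sum_{i=1}^k \|u_i^h\|_{L^2}^2$ on the right. I would apply the discrete Gronwall inequality, which produces a multiplicative constant of the form $c(T)$ (essentially $e^T$), independent of both $h$ and $\Delta t$; taking the maximum over $k \in \{1,\ldots,N\}$ then delivers \eqref{uh-uniform}. The uniform bound of the right-hand side of \eqref{uh-uniform} follows since $u_0^h \to u_0$ strongly in $L^2(\Omega;\R{m})$ and $F \in L^2(Q_T;\R{m})$. The main obstacle is really only this Gronwall step: absorbing $\tfrac{\Delta t}{2}\|u_{i+1}^h\|^2$ directly into the left-hand side would require $\Delta t$ small, whereas the discrete Gronwall lemma circumvents this and yields a constant independent of both discretization parameters.
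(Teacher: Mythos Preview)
Your argument is correct but differs from the paper's in how the forcing term is handled. In Step~2 you split via Young's inequality as $\tfrac{\Delta t}{2}\|F_{i+1}\|_{L^2}^2+\tfrac{\Delta t}{2}\|u_{i+1}^h\|_{L^2}^2$, which leaves $\Delta t\sum_{i=1}^k\|u_i^h\|_{L^2}^2$ on the right and forces a discrete Gronwall step. The paper instead bounds the full sum
\[
2\sum_{i=0}^M\int_\Omega\!\left(\int_{t_i}^{t_{i+1}}\!F\,{\rm d}t\right)\!\cdot u_{i+1}^h\,{\rm d}x
\;\leq\; 2T\|F\|_{L^2(Q_T)}^2+\tfrac{1}{2}\max_{i=1,\dots,N}\|u_i^h\|_{L^2}^2
\]
via Cauchy--Schwarz over $Q_T$ and Young's inequality applied once to the product $\|F\|_{L^2(Q_T)}\cdot\sqrt{T}\max_i\|u_i^h\|_{L^2}$. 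Because only $\tfrac12\max_i\|u_i^h\|^2$ appears on the right, taking the maximum over $M$ on the left absorbs it directly with no Gronwall needed; the bounds on the second and third terms then follow by substitution back into the summed inequality. This yields exactly the form $c(\|u_0^h\|^2+T\|F\|_{L^2(Q_T)}^2)$ stated, with an absolute constant, whereas your route produces a constant of order $e^{T}$ and, to be genuinely $\Delta t$-independent, tacitly needs a mild restriction (e.g.\ $\Delta t<2$) so the top index can be peeled off before Gronwall is applied---precisely the caveat you note in Step~4. Both approaches are valid; the paper's max-trick is sharper and avoids that caveat entirely.
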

\begin{proof} We begin by choosing $\phi^h=u_{i+1}^h$ in \eqref{timestep}, and noting the following identity:
\[\frac{(a-b)\cdot a}{\Delta t}=\frac{|a|^2-|b|^2}{2\Delta t}+\frac{|a-b|^2}{2\Delta t}\qquad \forall\, a,b \in \R{m}.\]
Using this in \eqref{timestep} gives
\[\int_\Omega\frac{|u_{i+1}^h|^2-|u_i^h|^2}{2\Delta t}+\frac{|u_{i+1}^h-u_i^h|^2}{2\Delta t}+a(Du_{i+1}^h) : Du_{i+1}^h + Bu_{i+1}^h\cdot u_{i+1}^h{\dd}x=\int_\Omega F_{i+1}\cdot u_{i+1}^h{\dd}x.\]
Now we use that $Bv\cdot v\geq0$ for all $v \in \R{m}$, multiply by $2\Delta t$ and sum through $i=0,\ldots,M$ for some $M<N$ to get
\begin{align}\label{uh-bound1}
\begin{aligned}
\int_\Omega |u_{M+1}^h|^2{\dd}x&+\sum_{i=0}^M\int_\Omega|u_{i+1}^h-u_i^h|^2{\dd}x+2\Delta t\sum_{i=0}^M\int_\Omega a(Du_{i+1}^h) : Du_{i+1}^h{\dd}x\\
&\leq\int_\Omega |u_0^h|^2{\dd}x+2\sum_{i=0}^M\int_\Omega \left(\int_{t_i}^{t_{i+1}}F(t,x){\dd}t\right)\cdot u^h_{i+1}{\dd}x\\
&\leq\int_\Omega |u_0^h|^2{\dd}x+ 2T\|F\|_{L^2(Q_T;\R{m})}^2+ \frac{1}{2}\max_{i=1,\ldots,N}\|u_i^h\|_{L^2(\Omega;\R{m})}^2 .
\end{aligned}
\end{align}
By omitting the second and the third term from the left-hand side of the inequality \eqref{uh-bound1} noting the independence of its right-hand side of $M$, and then taking the maximum over $M=1,\dots,N-1$, yields the desired bound on the first term on the left-hand side of the inequality \eqref{uh-uniform}. We then return with that bound to the inequality \eqref{uh-bound1} to further estimate its right-hand side from above, whilst omitting the first term from the left-hand side of \eqref{uh-bound1};
thus we arrive at the desired bound on the second and third term on  the left-hand side of the inequality \eqref{uh-uniform}. The expression on the right-hand side of \eqref{uh-uniform} is clearly independent of $\Delta t$. The final assertion in the statement of the lemma is a consequence of the assumed strong convergence of $u_0^h$ to $u_0$ in $L^2(\Omega;\R{m})$.
\end{proof}
As the functions $u^h_i$, $i=0,\dots, N$, are defined only on $\{t_0=0,t_1, \dots, t_N=T\}\times \overline{\Omega}$ rather than on the whole of $\overline{Q_T}$, we shall next extend them to $\overline{Q_T}$. Thus we define the following two functions:
\[u_{\Delta t}^h(t):=\frac{t-t_{i-1}}{\Delta t}u_i^h+\frac{t_i-t}{\Delta t}u_{i-1}^h\ \ \ \text{for}\ t\in[t_{i-1},t_i],\qquad i=1,\dots,N,\]
which is continuous and piecewise linear with respect to $t \in [0,T]$, and
\[\tilde{u}_{\Delta t}^h(t):=u_i^h\ \ \ \text{for}\ t\in (t_{i-1},t_i],\quad i = 1,\dots, N,\qquad\text{and}\quad \tilde{u}_{\Delta t}^h(t):=u_0^h\ \ \ \text{for}\ t \in [-\Delta t,0], \]
which is piecewise constant with respect to $t \in [-\Delta t,T]$. With these new notations, \eqref{timestep} can be written as
\begin{equation}\label{timestep2}
\int_\Omega\partial_tu_{\Delta t}^h\cdot \phi^h+a(D\tilde{u}_{\Delta t}^h) : D\phi^h+B\tilde{u}_{\Delta t}^h\cdot\phi^h{\dd}x=\int_\Omega \tilde{F}_{\Delta t}\cdot\phi^h{\dd}x \quad \mbox{for $t \in [0,T]$, with $u^h_{\Delta t}(0,\cdot) = u^h_0$},
\end{equation}
for all  $\phi^h\in V^h_m$. We aim now to obtain uniform bounds on the sequences $\{u_{\Delta t}^h\}$ and $\{\tilde{u}_{\Delta t}^h\}$, which will allow us to complete our passage to the limit in $\Delta t$, possibly along some subsequence.

\begin{lemma}\label{discretebounds}
There exists a positive constant $c$, independent of $h$ and $\Delta t$, such that, for all $h \in (0,h_0]$,
\[ \|\tilde{u}^h_{\Delta t}\|^2_{L^\infty(0,T;L^2(\Omega;\R{m}))} \leq c (\|u_0^h\|_{L^2(\Omega;\R{m})}^2+\|F\|_{L^2(Q_T;\R{m})}^2),\]
\[ \|\tilde{u}^h_{\Delta t}-\tilde{u}^h_{\Delta t}(\cdot-\Delta t)\|^2_{L^2(0,T;L^2(\Omega;\R{m}))} \leq c (\|u_0^h\|_{L^2(\Omega;\R{m})}^2+\|F\|_{L^2(Q_T;\R{m})}^2),\]
\[\|a(D\tilde{u}^h_{\Delta t})\|_{L^{\hat{q}'}\!(Q_T;\R{m\times n})}^{\hat{q}'}\leq c\left(1+\|u_0^h\|_{L^2(\Omega;\R{m})}^2+\|F\|_{L^2(Q_T;\R{m})}^2 +\|F\|_{L^{p'}\!(Q_{T};\R{m})}^{p'}\right),\quad \mbox{where $\hat{q}:=\max\{q,2\}$},\]
\[
 \sum_{i=1}^m \|D(\tilde{u}^h_{\Delta t})_i\|_{L^{p_i}(0,T;L^{p_i}(\Omega;\R{n}))}^{p_i}\leq c\left(1+\|u_0^h\|_{L^2(\Omega;\R{m})}^2+\|F\|_{L^{p'}\!(Q_{T};\R{m})}^{p'}\right).
\]
Furthermore, the right-hand sides of these inequalities are bounded above by a positive constant, independent of $h \in (0,h_0]$ and
$\Delta t$.
\end{lemma}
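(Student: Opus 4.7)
The plan is to translate the four desired bounds directly into statements about the sequence $\{u^h_i\}_{i=0}^N$ and then invoke Lemma \ref{discreteenergyestimate}, mimicking the arguments already carried out for the semidiscrete problem in the proof of Theorem \ref{YMS}. First I would observe that, by the very definition of $\tilde{u}^h_{\Delta t}$,
\[
\|\tilde{u}^h_{\Delta t}\|_{L^\infty(0,T;L^2(\Omega;\R{m}))} = \max_{i=1,\ldots,N}\|u^h_i\|_{L^2(\Omega;\R{m})},
\]
so the first inequality is immediate from \eqref{uh-uniform}. For the second inequality, since $\tilde{u}^h_{\Delta t}(t) - \tilde{u}^h_{\Delta t}(t-\Delta t) = u^h_i - u^h_{i-1}$ on $(t_{i-1},t_i]$,
\[
\int_0^T\|\tilde{u}^h_{\Delta t}(t)-\tilde{u}^h_{\Delta t}(t-\Delta t)\|_{L^2(\Omega;\R{m})}^2\dd t = \Delta t\sum_{i=1}^N\|u^h_i-u^h_{i-1}\|_{L^2(\Omega;\R{m})}^2,
\]
and Lemma \ref{discreteenergyestimate} bounds the sum (without the $\Delta t$ factor) by a constant independent of $h,\Delta t$, so the second inequality also follows.

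For the fourth inequality I would exploit the lower bound in \eqref{modelgrowth}, which, combined with the two pointwise inequalities used in the proof of Theorem \ref{YMS} to handle the cases $p_i\geq 2$ and $1<p_i<2$ separately, yields
\[
|Du^h_{i,j}|^{p_j} \leq c\,\bigl(1 + (\mu_j^2+|Du^h_{i,j}|^2)^{(p_j-2)/2}|Du^h_i|^2\bigr)\qquad \mbox{for each $j=1,\dots,m$},
\]
with $c$ depending only on $p_j$ and $|\mu_j|$. Summing in $j$, integrating over $\Omega$, multiplying by $\Delta t$ and summing $i=1,\dots,N$, the right-hand side is controlled by $c(1+T|\Omega| + \Delta t\sum_{i=1}^N\int_\Omega a(Du^h_i):Du^h_i\dd x)$, which Lemma \ref{discreteenergyestimate} bounds by the claimed constant. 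Since
\[
\Delta t\sum_{i=1}^N\int_\Omega |Du^h_{i,j}|^{p_j}\dd x = \int_0^T\int_\Omega |D(\tilde{u}^h_{\Delta t})_j|^{p_j}\dd x\dd t,
\]
the fourth inequality follows.

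For the third inequality, I would repeat the calculation from \eqref{a-bound} onwards in the proof of Theorem \ref{YMS}, verbatim, with $Du^h$ replaced by $D\tilde{u}^h_{\Delta t}$. Namely, by \eqref{modelgrowth},
\[
|a(D\tilde{u}^h_{\Delta t})|\leq c_1\sum_{i=1}^m\sum_{k=1}^m(\mu_i^2+|D(\tilde{u}^h_{\Delta t})_i|^2)^{(p_i-2)/2}|D(\tilde{u}^h_{\Delta t})_k|,
\]
and each summand is bounded in $L^{p_i'}\!(Q_T)$ (if $p_i\geq 2$) or $L^2(Q_T)$ (if $1<p_i<2$) by H\"older's inequality, using the already established bounds on $\int_0^T\int_\Omega (\mu_i^2+|D(\tilde{u}^h_{\Delta t})_i|^2)^{(p_i-2)/2}|D\tilde{u}^h_{\Delta t}|^2\dd x\dd t$ (controlled via the third term of \eqref{uh-uniform}) and on $\sum_i\|D(\tilde{u}^h_{\Delta t})_i\|_{L^{p_i}(Q_T)}^{p_i}$ (the fourth inequality just proved). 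Because $\hat{q}'=\min\{p_i',2\}\leq \min_i\{p_i',2\}$ and $Q_T$ has finite measure, each such term embeds into $L^{\hat{q}'}\!(Q_T)$, giving the stated bound. The final sentence of the lemma is immediate from the strong convergence $u_0^h\to u_0$ in $L^2(\Omega;\R{m})$.

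The only mildly subtle point is the third inequality, where one must carefully separate the cases $p_i\geq 2$ and $1<p_i<2$ and verify that after H\"older's inequality the resulting exponents indeed match $\hat{q}'$; this, however, is a direct transcription of the corresponding step from the proof of Theorem \ref{YMS}, so no new difficulty arises.
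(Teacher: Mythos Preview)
Your proposal is correct, and for the first, second and fourth bounds it matches the paper's argument exactly (both simply translate the estimates of Lemma~\ref{discreteenergyestimate} into statements about $\tilde u^h_{\Delta t}$, the fourth via the same pointwise inequalities used in the proof of Theorem~\ref{YMS}).

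For the third bound, however, the paper takes a different route. Rather than repeating the cross-term analysis of \eqref{a-bound}, it exploits the factorisation
\[
|a(A)|^{2} = \bigl(a(A):A\bigr)\,K(A),
\]
which follows from $a(A)=K(A)A$. If $\hat q=q>2$, Young's inequality with exponents $\tfrac{2}{q'}$ and $\tfrac{2}{2-q'}$ gives
\[
|a(A)|^{q'}\le \tfrac{q'}{2}\,a(A):A + \tfrac{2-q'}{2}\,|K(A)|^{\frac{q'}{2-q'}},
\]
and the first term is controlled directly by the third term of \eqref{uh-uniform}, while the second is bounded using the upper bound on $K$ in \eqref{modelgrowth} together with the elementary fact that $\tfrac{(p_j-2)q}{q-2}<p_j$ for $p_j\ge 2$. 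If $\hat q=2$, all $p_j\le 2$, whence $K$ is globally bounded and $|a(A)|^2\le C\,a(A):A$, so the third term of \eqref{uh-uniform} again suffices. This approach avoids the double sum in \eqref{a-bound} and the H\"older splitting of each individual summand; your approach has the compensating virtue of being a verbatim transcription of an argument already written out in full earlier in the paper.

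One small bookkeeping point: in your argument for the fourth inequality you invoke Lemma~\ref{discreteenergyestimate}, whose right-hand side carries $\|F\|_{L^2(Q_T)}^2$, whereas the stated bound has $\|F\|_{L^{p'}\!(Q_T)}^{p'}$. To obtain the latter one must rerun the discrete energy argument with H\"older's inequality (exponents $p,p'$) and Poincar\'e on the source term, exactly as in the passage from \eqref{modelenergy0} to \eqref{modelenergy-b}; this is what the paper means by ``completely analogous to the proof of \eqref{modelenergy-b}''. Either constant suffices for the uniform-in-$h,\Delta t$ conclusion, so this does not affect the correctness of the overall plan.
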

\begin{proof}
The first two bounds follow directly from the bounds on the first two terms in \eqref{uh-uniform}, while the fourth inequality is proved by an argument that is completely analogous to the proof of the bound on the second term appearing on the left-hand side of \eqref{modelenergy-b}. We shall therefore focus our attention on showing the third bound in the statement of the lemma. To this end, we begin by noting that since $a(\xi):\xi \geq 0$ for all $\xi \in \R{m \times n}$, we have that
\[ |a(Du_i^h)|^{q'} = |a(Du_i^h) : a(Du_i^h)|^{\frac{q'}{2}} = \big|\big(a(Du_i^h) : Du_i^h \big)\, K(Du_i^h)\big|^{\frac{q'}{2}} = \left(a(Du_i^h) : D\tilde{u}^h_{\Delta t}\right)^{\frac{q'}{2}}  |K(Du_i^h)|^{\frac{q'}{2}}. \]

First suppose that $q>2$; then, $\hat{q}=q$ and $\hat{q}'=q' \in (1,2)$; hence, by Young's inequality,
\[ |a(Du_i^h)|^{\hat{q}'} = |a(Du_i^h)|^{q'} \leq \frac{q'}{2} a(Du_i^h) : Du_i^h + \frac{2-q'}{2} |K(Du_i^h)|^{\frac{q'}{2-q'}}.\]
Therefore, by \eqref{uh-uniform} and \eqref{modelgrowth}, we have that
\begin{align*}
&\int_0^T\int_\Omega |a(D\tilde{u}^h_{\Delta t})|^{\hat{q}'}{\dd}x{\dd}t = \int_0^T\int_\Omega |a(D\tilde{u}^h_{\Delta t})|^{q'}{\dd}x{\dd}t=\Delta t\sum_{i=1}^N\int_\Omega |a(Du_i^h)|^{q'}{\dd}x\\
&\qquad\leq \frac{q'}{2} \Delta t\sum_{i=1}^N\int_\Omega a(Du_i^h) : Du_i^h{\dd}x +\frac{2-q'}{2}\Delta t\sum_{i=1}^N\int_\Omega |K(Du_i^h)|^\frac{q'}{2-q'}{\dd}x\\
&\qquad \leq c(q)\left(\|u_0^h\|_{L^2(\Omega;\R{m})}^2+T \|F\|_{L^2(\Omega;\R{m})}^2\right) + c(q) \Delta t\sum_{i=1}^N\int_\Omega\left(c_1\sum_{j=1}^m\left(|\mu_j|^2 + |Du_{i,j}^h|^2\right)^{\frac{p_j-2}{2}}\right)^\frac{q'}{2-q'}{\dd}x\\
&\qquad \leq c(q) \left(\|u_0^h\|_{L^2(\Omega;\R{m})}^2+T\|F\|_{L^2(Q_T;\R{m})}^2\right)
+c(c_1,q,m)\Delta t\sum_{i=1}^N\int_\Omega\left[\sum_{j=1}^m |\mu_j|^{\frac{(p_j-2)q}{q-2}} + \sum_{\stackrel{\mbox{\footnotesize $j\!=\!1$}}{p_j \geq 2}}^m |Du_{i,j}^h|^{\frac{(p_j-2)q}{q-2}}\right] {\dd}x.
\end{align*}
As $q:=\max\{p_1,\ldots,p_m\} >2$, for any $j \in \{1,\dots,m\}$ such that $p_j \geq 2$ we have that  $\frac{(p_j-2)q}{q-2}<p_j$; it thus follows that
\begin{align*}
&\int_0^T\int_\Omega |a(D\tilde{u}^h_{\Delta t})|^{\hat{q}'}{\dd}x{\dd}t\leq c(q)\left(\|u_0^h\|_{L^2(\Omega;\R{m})}^2+T\|F\|_{L^2(Q_T;\R{m})}^2\right)\\
&\ \ \ \ \ \ +c(c_1, q,m)\,  T \, |\Omega| \sum_{j=1}^m |\mu_j|^{\frac{(p_j-2)q}{q-2}}
+ c(q,m,c_1,p_1,\dots,p_m,|\Omega|)\,\Delta t\sum_{i=1}^N \int_\Omega \left(\sum_{j=1}^m|Du_{i,j}^h|^{p_j}\right){\dd} x.
\end{align*}
The final term on the right-hand side of this inequality is then bounded analogously as the corresponding term in the proof of Theorem \ref{YMS} and we therefore omit the details (cf. \eqref{modelenergy-b}):
\begin{align*}
\Delta t\sum_{i=1}^N \int_\Omega \left(\sum_{j=1}^m|Du_{i,j}^h|^{p_j}{\rm d}x\right)\leq c\big(1+\|u_0^h\|_{L^2(\Omega;\R{m})}^2+\|F\|_{L^{p'}\!(Q_{T};\R{m})}^{p'}\big).
\end{align*}
Substituting this into the right-hand side of the previous inequality completes the proof of the lemma in the case when
$\hat{q} = q>2$.

If $\hat{q}=2$ (and therefore $\hat{q}'=2$), then $p_j \in (1,2]$ for all $j=1,\ldots,m$. It then follows from \eqref{modelgrowth} that
\[
0 \leq K(A) \leq c_1\sum_{i=1}^m |\mu_i|^{p_i-2} \qquad \forall\, A \in \mathbb{R}^{m \times n},
\]
whereby
\[ |a(Du_i^h)|^{\hat{q}'} = |a(Du_i^h)|^{2}= \left(a(Du_i^h) : D\tilde{u}^h_{\Delta t}\right)  K(Du_i^h) \leq
c_1\left(\sum_{i=1}^m |\mu_i|^{p_i-2}\right) \left(a(Du_i^h) : D\tilde{u}^h_{\Delta t}\right).\]
Hence, by Lemma \ref{discreteenergyestimate},
\begin{align*}
&\int_0^T\int_\Omega |a(D\tilde{u}^h_{\Delta t})|^{\hat{q}'}{\dd}x{\dd}t=\Delta t\sum_{i=1}^N\int_\Omega |a(Du_i^h)|^{q'}{\dd}x\\
&\qquad\leq c_1\left(\sum_{i=1}^m |\mu_i|^{p_i-2}\right) \Delta t\sum_{i=1}^N\int_\Omega a(Du_i^h) : Du_i^h{\dd}x\\
&\qquad \leq c(c_1,p_1,\ldots,p_m, \mu_1,\ldots, \mu_m, m, T)
\left(\|u_0^h\|_{L^2(\Omega;\R{m})}^2+\|F\|_{L^2(\Omega;\R{m})}^2\right).
\end{align*}
The final assertion in the statement of the lemma is a consequence of the assumed strong convergence of $u^h_0$ to $u_0$ as
$h \rightarrow 0_+$. That completes the proof of the lemma.
\end{proof}

Next, we prove the required uniform bounds on the sequence $\{u^h_{\Delta t}\}$.

\begin{lemma}\label{discretebounds1}
There exists a positive constant $c$, independent of $h$ and $\Delta t$, such that,  for all $h \in (0,h_0]$,
\[ \|u^h_{\Delta t}\|^2_{L^\infty(0,T;L^2(\Omega;\R{m}))} \leq c (\|u_0^h\|_{L^2(\Omega;\R{m})}^2+\|F\|_{L^2(Q_T;\R{m})}^2);\]
furthermore, with  $\hat{q}:=\max\{q,2\}$,
\[ \|\partial_t u^h_{\Delta t}\|_{L^{\hat{q}'}\!(0,T;W^{-1,\hat{q}'}\!(\Omega;\R{m}))} \leq c\]
and
\[(\Delta t)^{-1}\|\tilde{u}^h_{\Delta t}-\tilde{u}^h_{\Delta t}(\cdot-\Delta t)\|_{L^{\hat{q}'}\!(0,T;W^{-1,\hat{q}'}\!(\Omega;\R{m}))} \leq c.\]
\end{lemma}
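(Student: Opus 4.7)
The plan is to treat the three estimates in order, with essentially all of the work concentrated on the bound on $\partial_t u^h_{\Delta t}$.

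For the first estimate, I would exploit the fact that, by construction, $u^h_{\Delta t}(t)$ is a convex combination of $u^h_{i-1}$ and $u^h_i$ on each subinterval $[t_{i-1},t_i]$; taking $L^2(\Omega;\R{m})$-norms and using the triangle inequality gives
\[
\|u^h_{\Delta t}(t)\|_{L^2(\Omega;\R{m})} \leq \max_{i=0,\ldots,N} \|u^h_i\|_{L^2(\Omega;\R{m})} \qquad \forall\, t \in [0,T],
\]
whence the claimed bound follows directly from the first term on the left-hand side of \eqref{uh-uniform} in Lemma \ref{discreteenergyestimate}, together with the strong convergence $u^h_0 \rightarrow u_0$ in $L^2(\Omega;\R{m})$.

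For the second estimate, I would proceed by duality. To bound $\|\partial_t u^h_{\Delta t}\|_{L^{\hat{q}'}\!(0,T;W^{-1,\hat{q}'}\!(\Omega;\R{m}))}$ I take an arbitrary $\phi \in L^{\hat{q}}(0,T;W^{1,\hat{q}}_0(\Omega;\R{m}))$ and exploit the $L^2$-orthogonality of the projector $P^h$: since $\partial_t u^h_{\Delta t}(t) = (u^h_i - u^h_{i-1})/\Delta t \in V^h_m$ for a.e.\ $t \in (t_{i-1}, t_i)$,
\[
\int_0^T\!\!\int_\Omega \partial_t u^h_{\Delta t} \cdot \phi \,{\dd}x\,{\dd}t = \int_0^T\!\!\int_\Omega \partial_t u^h_{\Delta t} \cdot P^h\phi \,{\dd}x\,{\dd}t.
\]
Testing \eqref{timestep2} with $\phi^h = P^h\phi(t,\cdot)$ for a.e.\ $t$ and integrating in time then rewrites the right-hand side as the sum of an $a$-term, a $B$-term and an $F$-term, which by H\"older's inequality in space-time are controlled respectively by $\|a(D\tilde u^h_{\Delta t})\|_{L^{\hat{q}'}\!(Q_T)}\|DP^h\phi\|_{L^{\hat{q}}(Q_T)}$, $|B|\,\|\tilde u^h_{\Delta t}\|_{L^2(Q_T;\R{m})}\|P^h\phi\|_{L^2(Q_T;\R{m})}$ and $\|\tilde F_{\Delta t}\|_{L^2(Q_T;\R{m})}\|P^h\phi\|_{L^2(Q_T;\R{m})}$, using Jensen's inequality to pass from $\tilde F_{\Delta t}$ back to $F$. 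Applying the $W^{1,\hat{q}}_0$-stability \eqref{e:stab} of $P^h$, Poincar\'e's inequality, and the embedding $L^{\hat{q}}(\Omega) \hookrightarrow L^2(\Omega)$ for the bounded domain $\Omega$ (valid since $\hat{q} \geq 2$), all three terms are bounded by $C\|\phi\|_{L^{\hat{q}}(0,T;W^{1,\hat{q}}_0(\Omega;\R{m}))}$, where the constant $C$ is expressed through the uniform bounds on $\|a(D\tilde u^h_{\Delta t})\|_{L^{\hat{q}'}\!(Q_T;\R{m \times n})}$ and $\|\tilde u^h_{\Delta t}\|_{L^\infty(0,T;L^2(\Omega;\R{m}))}$ from Lemma \ref{discretebounds} and the assumption that $F \in L^2(Q_T;\R{m})$.

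The third estimate is then an immediate consequence of the second: for $t \in (t_{i-1}, t_i)$ with $i \geq 1$, the convention $\tilde u^h_{\Delta t}\equiv u^h_0$ on $[-\Delta t, 0]$ yields $\tilde u^h_{\Delta t}(t) - \tilde u^h_{\Delta t}(t-\Delta t) = u^h_i - u^h_{i-1} = \Delta t\, \partial_t u^h_{\Delta t}(t)$ pointwise almost everywhere, so $(\Delta t)^{-1}(\tilde u^h_{\Delta t} - \tilde u^h_{\Delta t}(\cdot - \Delta t)) = \partial_t u^h_{\Delta t}$ as elements of $L^{\hat{q}'}\!(0,T; W^{-1,\hat{q}'}\!(\Omega;\R{m}))$, whereupon the required bound follows. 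I expect the main obstacle to be keeping the constants in the $\partial_t u^h_{\Delta t}$ estimate uniform with respect to both $h$ and $\Delta t$; the critical ingredient here is the $W^{1,\hat{q}}_0$-stability \eqref{e:stab} of $P^h$, without which the passage from discrete test functions in $V^h_m$ to general test functions in $L^{\hat{q}}(0,T;W^{1,\hat{q}}_0(\Omega;\R{m}))$ could not be performed with an $h$-independent constant.
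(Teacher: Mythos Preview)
Your proposal is correct and follows essentially the same route as the paper's proof: the convex-combination argument for the first bound, the duality argument via the $L^2$-orthogonal projector $P^h$ and its $W^{1,\hat{q}}_0$-stability \eqref{e:stab} for the second, and the pointwise identification $(\Delta t)^{-1}(\tilde u^h_{\Delta t}-\tilde u^h_{\Delta t}(\cdot-\Delta t))=\partial_t u^h_{\Delta t}$ for the third. The only cosmetic difference is that the paper bounds $\|P^h\phi\|_{L^2(Q_T;\R{m})}$ directly by $\|\phi\|_{L^2(Q_T;\R{m})}$ (using that $P^h$ is an $L^2$-orthogonal projector) rather than routing through Poincar\'e and the embedding $L^{\hat{q}}\hookrightarrow L^2$, but the outcome is the same.
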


\begin{proof}
The first inequality is just a restatement of the bound on the first term in the inequality \eqref{uh-uniform} in terms of
$u^h_{\Delta t}$, and the third inequality is a restatement of the second inequality in terms of $\tilde{u}^h_{\Delta t}$. We shall therefore concentrate on the proof of the second stated inequality. The argument proceeds along the same lines as the proof of \eqref{modeltimederivative}. For each $\varphi \in L^{\hat{q}}(0,T;W^{1,\hat{q}}_0(\Omega;\R{m}))$ we have, using the stability in $W^{1,\hat{q}}_0(\Omega;\R{m})$ of the $L^2(\Omega;\R{m})$ orthogonal projector $P^h:L^2(\Omega;\R{m})\rightarrow V^h_m$ (cf. \eqref{e:stab}), that
\begin{align*}
\int_0^T (\partial_t u^h_{\Delta t}, \varphi) \dd t &= \int_0^T (\partial_t u^h_{\Delta t}, P^h\varphi) \dd t
 = \sum_{i=0}^{N-1} \int_{t_i}^{t_{i+1}} \int_\Omega \frac{u_{i+1}^h - u_i^h}{\Delta t} \cdot P^h\varphi \dd x \dd t\\
&= -  \sum_{i=0}^{N-1} \int_{t_i}^{t_{i+1}} \int_\Omega a(Du^h_{i+1}) \cdot DP^h\varphi \dd x \dd t +
\sum_{i=0}^{N-1} \int_{t_i}^{t_{i+1}} ({F}_{i+1} - Bu_{i+1}^h)\cdot P^h\varphi \dd x \dd t\\
 &= -  \int_0^T ( a(D\tilde{u}^h_{\Delta t}(t)),DP^h\varphi) \dd t + \int_0^T ( \tilde{F}_{\Delta t} - B\tilde{u}^h_{\Delta t} ,P^h\varphi ) \dd t\\
&\leq \|a(D\tilde{u}^h_{\Delta t})\|_{L^{{\hat{q}}'}\!(Q_T;\R{m \times n})} \|D\varphi\|_{L^{\hat{q}}(Q_T;\R{m\times n})} \\
&\qquad + \bigg(\|\tilde{F}_{\Delta t}\|_{L^2(Q_T;\R{m})} + |B| \|\tilde{u}^h_{\Delta t}\|_{L^2(Q_T;\R{m})}\bigg) \|\varphi\|_{L^2(Q_T;\R{m})}\\
&\leq c (\|D\varphi\|_{L^{\hat{q}}(Q_T;\R{m\times n})} + \|\varphi\|_{L^{\hat{q}}(Q_T;\R{m})})  \leq  c \|\varphi\|_{L^{\hat{q}}(0,T;W^{1,{\hat{q}}}(\Omega;\R{m}))},
\end{align*}
where in the transition to the last line we have used the third and the first bound from Lemma \ref{discretebounds}, together with the assumed strong convergence of $u^h_0$ to $u_0$ in $L^2(\Omega;\R{m})$ to deduce that the constant $c$ is independent of $h$ and $\Delta t$. Dividing through by $\|\varphi\|_{L^{\hat{q}}(0,T;W^{1,{\hat{q}}}(\Omega;\R{m}))}$, taking the supremum over all $\varphi \in L^{\hat{q}}(0,T;W^{1,{\hat{q}}}_0(\Omega;\R{m}))$ and recalling the definition of the norm of the dual space $L^{\hat{q}'}\!(0,T;W^{-1,\hat{q}'}\!(\Omega;\R{m}))=(L^{\hat{q}}(0,T;W^{1,{\hat{q}}}_0(\Omega;\R{m})))'$ the stated inequality directly follows.
\end{proof}

We shall now discuss the convergence of the sequence of functions $\{u_{\Delta t}^h\}$ with $h \in (0,h_0]$ fixed.

\begin{lemma}\label{weaktimeconvergence}
There exists a subsequence of $\Delta t$, labelled $\Delta t_k$, and a function $u^h\in C([0,T];V^h_m)$ such that $u_{\Delta t_k}^h\rightarrow u^h$ in $C([0,T];L^2(\Omega;\R{m}))$, and $\partial_tu_{\Delta t_k}^h\rightharpoonup\partial_tu^h$ in $L^{\hat{q}'}\!(0,T;W^{-1,\hat{q}'}\!(\Omega;\R{m}))$ for any (fixed) $h \in (0,h_0]$, as $k \rightarrow \infty$.
\end{lemma}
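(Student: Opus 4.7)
The plan is to exploit that, for any fixed $h \in (0,h_0]$, the Galerkin space $V^h_m$ is finite-dimensional, so that all norms on $V^h_m$ are mutually equivalent with equivalence constants depending on $h$ but not on $\Delta t$. My first step is to upgrade the time-derivative bound from Lemma \ref{discretebounds1}: since $V^h_m \subset L^2(\Omega;\R{m}) \hookrightarrow W^{-1,\hat{q}'}\!(\Omega;\R{m})$ and $\partial_t u^h_{\Delta t}(t) \in V^h_m$ for a.e. $t$, applying the pointwise-in-$t$ equivalence of the $L^2$ and $W^{-1,\hat{q}'}$ norms on $V^h_m$ and then integrating in $t$ lifts the bound to
\[
\|\partial_t u^h_{\Delta t}\|_{L^{\hat{q}'}\!(0,T;L^2(\Omega;\R{m}))} \leq C(h),
\]
uniformly in $\Delta t$. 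This is the key step that unlocks the rest of the argument.

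Next I would establish equicontinuity of $\{u^h_{\Delta t}\}$ as maps $[0,T] \to L^2(\Omega;\R{m})$. Since $u^h_{\Delta t}$ is piecewise linear in $t$ with values in $V^h_m \subset L^2$, for $0 \leq s < t \leq T$ H\"older's inequality applied to $u^h_{\Delta t}(t)-u^h_{\Delta t}(s) = \int_s^t \partial_\tau u^h_{\Delta t}(\tau)\,\dd\tau$ together with the bound just derived yields
\[
\|u^h_{\Delta t}(t) - u^h_{\Delta t}(s)\|_{L^2(\Omega;\R{m})} \leq C(h)(t-s)^{1/\hat{q}}.
\]
Combined with the uniform-in-$\Delta t$ bound $\|u^h_{\Delta t}\|_{L^\infty(0,T;L^2(\Omega;\R{m}))} \leq c$ from Lemma \ref{discretebounds1}, and with the observation that bounded subsets of the finite-dimensional space $V^h_m$ are precompact in $L^2(\Omega;\R{m})$, the Arzel\`a--Ascoli theorem for continuous Banach-space-valued maps produces a subsequence $\Delta t_k \to 0$ and a limit $u^h$ such that $u^h_{\Delta t_k} \to u^h$ in $C([0,T]; L^2(\Omega;\R{m}))$. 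Because $V^h_m$ is a closed subspace of $L^2(\Omega;\R{m})$ containing every $u^h_{\Delta t_k}(t)$, the limit $u^h(t)$ lies in $V^h_m$ as well, so $u^h \in C([0,T]; V^h_m)$.

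For the weak convergence of the time derivatives, the space $L^{\hat{q}'}\!(0,T;W^{-1,\hat{q}'}\!(\Omega;\R{m}))$ is reflexive (since $\hat{q}' \in (1,2]$ and $W^{-1,\hat{q}'}\!(\Omega;\R{m})$ is the dual of the reflexive space $W^{1,\hat{q}}_0(\Omega;\R{m})$), so Banach--Alaoglu furnishes a further subsequence along which $\partial_t u^h_{\Delta t_k} \rightharpoonup w$ therein. I would identify $w = \partial_t u^h$ distributionally by testing against $\phi(t)v(x)$ with $\phi \in C^\infty_c((0,T))$ and $v \in W^{1,\hat{q}}_0(\Omega;\R{m})$: integration by parts in $t$ on
\[
\int_0^T \langle \partial_t u^h_{\Delta t_k}(t), v\rangle\, \phi(t)\,\dd t = -\int_0^T (u^h_{\Delta t_k}(t), v)_{L^2}\, \phi'(t)\,\dd t
\]
and then passing to the limit in $k$ on both sides, using weak convergence on the left and the uniform $C([0,T]; L^2)$ convergence on the right, identifies $w$ as the distributional time derivative of $u^h$.

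The argument presents no genuine conceptual obstacle, as fixing $h$ collapses the infinite-dimensional analytic difficulties characteristic of the rest of the paper into a routine Arzel\`a--Ascoli plus Banach--Alaoglu argument. The one point requiring vigilance is that the equivalence-of-norms constants in the first step depend on $h$; but since the statement concerns the limit $\Delta t \to 0$ with $h$ held fixed, these $h$-dependent prefactors are harmless.
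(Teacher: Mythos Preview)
Your proof is correct and somewhat more direct than the paper's. The paper proceeds by first extracting, via the bounds in Lemma~\ref{discretebounds1}, weak-$\ast$ and weak subsequential limits $u^h$ and $\partial_t u^h$ in $L^\infty(0,T;L^2(\Omega;\R{m}))$ and $L^{\hat q'}\!(0,T;W^{-1,\hat q'}\!(\Omega;\R{m}))$ respectively, then invoking the abstract Lemma~\ref{ArzAscApp} to obtain $u^h_{\Delta t_k}\to u^h$ in $C_w([0,T];L^2(\Omega;\R{m}))$, and only at the very end exploiting the finite-dimensionality of $V^h_m$ to upgrade weak-continuous convergence to strong-continuous convergence (since $C_w([0,T];V^h_m)=C([0,T];V^h_m)$). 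You instead front-load the finite-dimensionality: by lifting the $W^{-1,\hat q'}$ bound on $\partial_t u^h_{\Delta t}$ to an $L^2$ bound via norm equivalence on $V^h_m$, you obtain genuine H\"older equicontinuity in $C([0,T];L^2)$ and can apply the classical Arzel\`a--Ascoli theorem directly. Your route is self-contained and avoids the $C_w$ machinery of Lemma~\ref{ArzAscApp}; the paper's route has the virtue of reusing a lemma already needed for the semidiscrete analysis in Section~\ref{sec:3}, where the target space is infinite-dimensional and your norm-equivalence shortcut is unavailable.
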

\begin{proof}
We begin by noting that, for any $\Delta t$ and any $h \in (0,h_0]$, the functions $u^h_{\Delta t}$ are continuous in both $t$ and $x$. Thanks to the first two bounds in Lemma \ref{discretebounds1} there exists a subsequence of $\Delta t$, labelled $\Delta t_k$, and a function $u^h\in L^\infty(0,T;V^h_m)$ such that $u_{\Delta t_k}^h \overset{\ast}\rightharpoonup u^h$ in $L^\infty(0,T;L^2(\Omega;\R{m}))$, and $\partial_tu_{\Delta t_k}^h\rightharpoonup\partial_tu^h$ in $L^{\hat{q}'}\!(0,T;W^{-1,\hat{q}'}\!(\Omega;\R{m}))$. By Lemma \ref{ArzAscApp}, $u^h\in C_w([0,T];L^2(\Omega;\R{m}))$ and $u_{\Delta t_k}^h \rightarrow u^h$ in $C_w([0,T];L^2(\Omega;\R{m}))$. As both $u^h_{\Delta t}(t,\cdot)$ and $u^h(t,\cdot)$ belong to the finite-dimensional space $V^h_m$ for all $t \in [0,T]$, it follows that $u^h\in C([0,T];L^2(\Omega;\R{m}))$ and $u_{\Delta t_k}^h \rightarrow u^h$ in $u^h\in C([0,T];L^2(\Omega;\R{m}))$, because $u^h\in C_w([0,T];V^h_m) = C([0,T];V^h_m)$ and because a sequence converges in $C_w([0,T];V^h_m)$ if, and only if, it converges in $C([0,T];V^h_m)$ as $k \rightarrow \infty$ for any (fixed) $h \in (0,h_0]$.
\end{proof}

Next, we study the convergence of the sequence of functions $\{\tilde{u}_{\Delta t}^h\}$ with $h \in (0,h_0]$ fixed.

\begin{lemma}\label{nonlinearconvergence}
There exists a function $\tilde{u}^h \in L^\infty(0,T;V^h_m)$ and a subsequence $\Delta t_k$ such that
\[\tilde{u}_{\Delta t_k}^h \rightharpoonup \tilde{u}^h
 \quad \mbox{in} \ \ \bigtimes_{j=1}^m L^{p_j}(0,T;W^{1,p_j}_0(\Omega;\R{m}))
 \qquad\mbox{and}\qquad \tilde{u}_{\Delta t_k}^h \rightarrow \tilde{u}^h
 \quad \mbox{in} \ L^r(0,T;L^2(\Omega;\R{m})),
\]
for any $r \in [1,\infty)$ and fixed $h \in (0,h_0]$, as $k\rightarrow \infty$. Furthermore, along this subsequence we have $a(D\tilde{u}_{\Delta t_k}^h)\rightarrow a(D\tilde{u}^h)$ in $L^{s}(Q_T;\R{m\times n})$ for any $s \in [1,\hat{q}')$ and $a(D\tilde{u}_{\Delta t_k}^h)\rightharpoonup a(D\tilde{u}^h)$ in $L^{\hat{q}'}\!(Q_T;\R{m\times n})$, for any fixed $h \in (0,h_0]$, as $k\rightarrow \infty$.
\end{lemma}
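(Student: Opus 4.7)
My plan is to exploit two features of the setting: Lemma \ref{weaktimeconvergence} has already produced a subsequence $\Delta t_k$ along which $u^h_{\Delta t_k}\to u^h$ strongly in $C([0,T];L^2(\Omega;\R{m}))$, and for fixed $h$ the space $V^h_m$ is finite-dimensional, so all norms on it are equivalent (with constants depending on $h$ but not on $\Delta t_k$). My first step is to compare $\tilde{u}^h_{\Delta t}$ to $u^h_{\Delta t}$: on each subinterval $(t_{i-1},t_i]$ one has $u^h_{\Delta t}(t)-\tilde{u}^h_{\Delta t}(t)=\frac{t_i-t}{\Delta t}(u^h_{i-1}-u^h_i)$, and a direct computation gives
\[
\|u^h_{\Delta t}-\tilde{u}^h_{\Delta t}\|_{L^2(Q_T;\R{m})}^2=\frac{\Delta t}{3}\sum_{i=1}^N\|u^h_i-u^h_{i-1}\|_{L^2(\Omega;\R{m})}^2\leq \frac{c\,\Delta t}{3},
\]
by Lemma \ref{discreteenergyestimate}. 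Combined with Lemma \ref{weaktimeconvergence}, this yields $\tilde{u}^h_{\Delta t_k}\to u^h$ strongly in $L^2(Q_T;\R{m})$. I would then set $\tilde{u}^h:=u^h$.

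Next, because $\{\tilde{u}^h_{\Delta t}\}$ is bounded in $L^\infty(0,T;L^2(\Omega;\R{m}))$ uniformly in $\Delta t$ by Lemma \ref{discretebounds}, standard $L^p$-interpolation upgrades the $L^2(Q_T)$ convergence of the previous step to strong convergence in $L^r(0,T;L^2(\Omega;\R{m}))$ for every $r\in[1,\infty)$. The weak convergence in the product space $\bigtimes_{j=1}^m L^{p_j}(0,T;W^{1,p_j}_0(\Omega;\R{m}))$ then follows from the fourth bound in Lemma \ref{discretebounds} together with Banach--Alaoglu, extracting a further subsequence (still labelled $\Delta t_k$); the weak limit must coincide with $\tilde{u}^h$ by uniqueness of limits, since strong $L^2(Q_T;\R{m})$ convergence already identifies the limit.

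For the nonlinear term I would use finite-dimensionality of $V^h_m$ in an essential way. Since all norms on $V^h_m$ are equivalent (with $h$-dependent constants), the strong $L^r(0,T;L^2(\Omega;\R{m}))$ convergence of $\tilde{u}^h_{\Delta t_k}$ lifts, for fixed $h$, to strong convergence in $L^r(0,T;W^{1,\infty}(\Omega;\R{m}))$ and, consequently, $D\tilde{u}^h_{\Delta t_k}\to D\tilde{u}^h$ strongly in $L^r(Q_T;\R{m\times n})$ for every $r\in[1,\infty)$. Passing to a further subsequence (still labelled $\Delta t_k$), I may assume $D\tilde{u}^h_{\Delta t_k}\to D\tilde{u}^h$ almost everywhere on $Q_T$, and then continuity of $a$ (from its assumed local Lipschitz continuity) gives $a(D\tilde{u}^h_{\Delta t_k})\to a(D\tilde{u}^h)$ a.e.~on $Q_T$.

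Finally, I would combine this a.e.~convergence with the uniform bound $\|a(D\tilde{u}^h_{\Delta t})\|_{L^{\hat{q}'}(Q_T;\R{m\times n})}\leq C$ from the third bound in Lemma \ref{discretebounds}: since the sequence is bounded in $L^{\hat{q}'}$, it is equi-integrable in $L^s$ for every $s\in[1,\hat{q}')$, and Vitali's convergence theorem then yields $a(D\tilde{u}^h_{\Delta t_k})\to a(D\tilde{u}^h)$ strongly in $L^s(Q_T;\R{m\times n})$ for every such $s$. The weak convergence $a(D\tilde{u}^h_{\Delta t_k})\rightharpoonup a(D\tilde{u}^h)$ in $L^{\hat{q}'}(Q_T;\R{m\times n})$ follows from Banach--Alaoglu applied to the $L^{\hat{q}'}$-bounded sequence, combined with uniqueness of the limit (the weak $L^{\hat{q}'}$-limit must agree with the strong $L^s$-limit). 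The only subtle point in this approach is the use of norm-equivalence on $V^h_m$ to pass from strong $L^2$ convergence of the functions to strong (and hence a.e.~after extraction) convergence of their gradients; this step depends on $h$ being fixed and would fail in the joint limit $h,\Delta t\to 0_+$, but it is precisely what makes the passage to the limit in the nonlinear term $a(\cdot)$ routine at this stage.
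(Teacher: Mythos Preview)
Your proof is correct and reaches the same conclusions, but it takes a genuinely different route from the paper. The paper treats $\tilde{u}^h_{\Delta t}$ independently of $u^h_{\Delta t}$: it invokes an Aubin--Lions type compactness result (Theorem~1 of \cite{DJ}, applied with $X=W^{1,p}_0(\Omega;\R{m})$, $B=L^2(\Omega;\R{m})$, $Y=W^{-1,\hat{q}'}(\Omega;\R{m})$) directly on the piecewise-constant sequence, using the time-shift bound from Lemma~\ref{discretebounds1}, to extract strong $L^p(0,T;L^2)$ convergence, and only afterwards (in the next lemma) identifies $\tilde{u}^h=u^h$ via the very $L^2$ comparison you start from. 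You instead leverage Lemma~\ref{weaktimeconvergence}, which has already delivered $u^h_{\Delta t_k}\to u^h$ in $C([0,T];L^2(\Omega;\R{m}))$, and combine it with the explicit interpolant comparison $\|u^h_{\Delta t}-\tilde{u}^h_{\Delta t}\|_{L^2(Q_T)}^2=\frac{\Delta t}{3}\sum_i\|u^h_i-u^h_{i-1}\|^2$ to get strong $L^2(Q_T)$ convergence of $\tilde{u}^h_{\Delta t_k}$ without appealing to any external compactness theorem. This is more elementary, and it identifies $\tilde{u}^h=u^h$ as a byproduct rather than as a separate step. From that point on the two arguments coincide: both use norm equivalence on the finite-dimensional space $V^h_m$ (with $h$-dependent constants) to upgrade convergence of functions to convergence of gradients, extract an a.e.\ convergent subsequence, and close with Vitali's theorem using the uniform $L^{\hat{q}'}$ bound on $a(D\tilde{u}^h_{\Delta t})$.
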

\begin{proof} Thanks to the first and fourth bound from Lemma \ref{discretebounds}, there exists a
$\tilde{u}^h \in L^\infty(0,T;V^h_m)$ and a subsequence $\Delta t_k$ such that $\tilde{u}_{\Delta t_k}^h \overset{\ast}\rightharpoonup \tilde{u}^h$ in $L^\infty(0,T;L^2(\Omega;\R{m}))$ and
\[\tilde{u}_{\Delta t_k}^h \rightharpoonup \tilde{u}^h
 \qquad \mbox{in} \ \ \bigtimes_{j=1}^m L^{p_j}(0,T;W^{1,p_j}_0(\Omega;\R{m})),
\]
for any fixed $h \in (0,h_0]$, as $k\rightarrow \infty$.

By applying Theorem 1 from \cite{DJ} (with $X=W^{1,p}_0(\Omega;\R{m})$, $B=L^2(\Omega;\R{m})$ and $Y=W^{-1,\hat{q}'}\!(\Omega;\R{m})$ and $r=1$, there), we deduce that $\{\tilde{u}^h_{\Delta t_k}\}$ is relatively compact in $L^p(0,T;L^2(\Omega;\R{m}))$, for each fixed $h \in (0,h_0]$; therefore, because it is a bounded sequence in $L^\infty(0,T;L^2(\Omega;\R{m}))$, by interpolation  $\{\tilde{u}^h_{\Delta t_k}\}$ is also relatively compact in $L^r(0,T;L^2(\Omega;\R{m}))$, for each fixed $h \in (0,h_0]$ and all $r \in [1,\infty)$.

Hence, in particular, $\{\tilde{u}^h_{\Delta t_k}\}$ is relatively compact in $L^1(0,T;L^1(\Omega;\R{m}))$, for each fixed $h \in (0,h_0]$. As $\tilde{u}^h_{\Delta t_k}(t,\cdot) \in V^h_m$ for all $t \in [0,T]$, and $V^h_m$ is finite-dimensional, for each fixed $h \in (0,h_0]$, by norm equivalence in finite-dimensional spaces, it follows that $\{\tilde{u}^h_{\Delta t_k}\}$ is relatively compact in $L^1(0,T;W^{1,1}_0(\Omega;\R{m}))$, for each fixed $h \in (0,h_0]$. Therefore $D\tilde{u}^h_{\Delta t_k} \rightarrow D\tilde{u}^h$ in $L^1(Q_T;\R{m \times n})$ as $k \rightarrow \infty$, for each fixed $h \in (0,h_0]$. Thus we can extract a subsequence with respect to $k$ (not indicated) such that $D\tilde{u}^h_{\Delta t_k} \rightarrow D\tilde{u}^h$ a.e. on $Q_T$ as $k \rightarrow \infty$, for each fixed $h \in (0,h_0]$. Thanks to the continuity of $a$, we then have that $a(D\tilde{u}^h_{\Delta t_k}) \rightarrow a(D\tilde{u}^h)$ a.e. on $Q_T$ as $k \rightarrow \infty$, for each fixed $h \in (0,h_0]$. Since by the third inequality from Lemma \ref{discretebounds} the sequence $\{a(D\tilde{u}^h_{\Delta t_k})\}$ is weakly compact in $L^{\hat{q}'}\!(Q_T;\R{m \times n})$ and hence in particular also in $L^1(Q_T;\R{m \times n})$, for each fixed $h \in (0,h_0]$, it follows by Vitali's convergence theorem that $a(D\tilde{u}^h_{\Delta t_k}) \rightarrow a(D\tilde{u}^h)$ strongly in $L^1(Q_T;\R{m \times n})$ as $k \rightarrow \infty$, for each fixed $h \in (0,h_0]$, and therefore also strongly in  $L^s(Q_T;\R{m \times n})$ as $k \rightarrow \infty$, for each fixed $h \in (0,h_0]$ and any $s \in [1,\hat{q}')$. By the uniqueness of the weak limit it then follows that $a(D\tilde{u}_{\Delta t_k}^h)\rightharpoonup a(D\tilde{u}^h)$ weakly in $L^{\hat{q}'}\!(Q_T;\R{m\times n})$, for any fixed $h \in (0,h_0]$, as $k\rightarrow \infty$.
\end{proof}

We now show that the limit functions $u^h$ and $\tilde{u}^h$ defined above are equal when considered as elements of the space $L^2(Q_T;\R{m})$.

\begin{lemma} The limiting functions $u^h$ and $\tilde{u}^h$ are equal in the space $L^2(Q_T;\R{m})=L^2(0,T;L^2(\Omega;\R{m}))$.
\end{lemma}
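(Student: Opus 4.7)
The plan is to exploit the explicit formula for the difference $u^h_{\Delta t}(t) - \tilde{u}^h_{\Delta t}(t)$ on a single subinterval $[t_{i-1},t_i]$ and then use the second bound from Lemma \ref{discreteenergyestimate} to show that this difference tends to zero in $L^2(Q_T;\R{m})$; combined with the already-established convergences $u^h_{\Delta t_k} \to u^h$ in $C([0,T];L^2(\Omega;\R{m}))$ (Lemma \ref{weaktimeconvergence}) and $\tilde{u}^h_{\Delta t_k} \to \tilde{u}^h$ in $L^2(0,T;L^2(\Omega;\R{m}))$ (Lemma \ref{nonlinearconvergence} with $r=2$), this will identify the two limits.

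Concretely, first I would write, for $t \in [t_{i-1},t_i]$,
\[
u^h_{\Delta t}(t) - \tilde{u}^h_{\Delta t}(t) = \frac{t-t_{i-1}}{\Delta t}u_i^h + \frac{t_i - t}{\Delta t}u_{i-1}^h - u_i^h = \frac{t_i - t}{\Delta t}\bigl(u_{i-1}^h - u_i^h\bigr).
\]
Squaring, integrating in $x$, and integrating in $t$ from $t_{i-1}$ to $t_i$ gives
\[
\int_{t_{i-1}}^{t_i} \|u^h_{\Delta t}(t) - \tilde{u}^h_{\Delta t}(t)\|_{L^2(\Omega;\R{m})}^2 \dd t = \frac{\Delta t}{3}\, \|u_i^h - u_{i-1}^h\|_{L^2(\Omega;\R{m})}^2.
\]
Summing over $i=1,\ldots,N$ and invoking the second-term bound from \eqref{uh-uniform} in Lemma \ref{discreteenergyestimate} yields
\[
\|u^h_{\Delta t} - \tilde{u}^h_{\Delta t}\|_{L^2(Q_T;\R{m})}^2 = \frac{\Delta t}{3}\sum_{i=1}^N \|u_i^h - u_{i-1}^h\|_{L^2(\Omega;\R{m})}^2 \leq c\,\Delta t,
\]
with $c$ independent of $h$ and $\Delta t$.

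Hence $u^h_{\Delta t_k} - \tilde{u}^h_{\Delta t_k} \to 0$ in $L^2(Q_T;\R{m})$ as $k\rightarrow\infty$. Since Lemma \ref{weaktimeconvergence} gives $u^h_{\Delta t_k} \to u^h$ in $C([0,T];L^2(\Omega;\R{m}))$, which embeds continuously into $L^2(0,T;L^2(\Omega;\R{m})) = L^2(Q_T;\R{m})$, and Lemma \ref{nonlinearconvergence} (taking $r=2$) gives $\tilde{u}^h_{\Delta t_k} \to \tilde{u}^h$ in $L^2(Q_T;\R{m})$, the uniqueness of strong limits in $L^2(Q_T;\R{m})$ forces $u^h = \tilde{u}^h$ almost everywhere on $Q_T$.

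There is no real obstacle here: the argument is a direct telescoping computation together with the $\Delta t$-independent control of $\sum_i \|u_i^h - u_{i-1}^h\|_{L^2}^2$ already available from the discrete energy estimate. The only point worth attention is making sure one uses two convergences in the same topology ($L^2(Q_T;\R{m})$); the $C([0,T];L^2)$ convergence of $u^h_{\Delta t_k}$ is more than enough for that purpose.
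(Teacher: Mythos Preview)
Your argument is correct and matches the paper's own proof essentially line for line: the paper uses the triangle inequality to split $\|u^h-\tilde u^h\|_{L^2(Q_T)}$ into the same three pieces, computes the middle term exactly as you do to obtain $\frac{\Delta t_k}{3}\sum_{i=1}^{N_k}\|u_i^h-u_{i-1}^h\|_{L^2(\Omega)}^2$, and handles the outer two terms via Lemma~\ref{weaktimeconvergence} and Lemma~\ref{nonlinearconvergence} with $r=2$. The only cosmetic difference is that the paper cites the second bound in Lemma~\ref{discretebounds} rather than Lemma~\ref{discreteenergyestimate} directly, but that bound is just a restatement of the one you use.
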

\begin{proof} We apply the triangle inequality to see that, with $\Delta t_k = T/N_k$ and $h \in (0,h_0]$,
\begin{align*}\|u^h-\tilde{u}^h\|_{L^2(0,T; L^2(\Omega;\R{m}))}&\leq \|u^h-u_{\Delta t_{k}}^h\|_{L^2(0,T;L^2(\Omega;\R{m}))}+\|u_{\Delta t_{k}}^h-\tilde{u}_{\Delta t_{k}}^h\|_{L^2(0,T;L^2(\Omega;\R{m}))}\\
&\ \ \ \ \ \ +\|\tilde{u}_{\Delta t_{k}}^h-\tilde{u}^h\|_{L^2(0,T;L^2(\Omega;\R{m}))}.
\end{align*}
The first and last terms converge to zero in the limit as $\Delta t_{k}\rightarrow0_+$  thanks to the convergence results stated in Lemma \ref{weaktimeconvergence} and Lemma \ref{nonlinearconvergence} (with $r=2$), respectively, for each fixed $h \in (0,h_0]$. We now show that the second term also converges to zero in the limit:
\begin{align*}
\|u_{\Delta t_{k}}^h-\tilde{u}_{\Delta t_{k}}^h\|_{L^2(0,T;L^2(\Omega;\R{m}))}^2&=\int_0^T\int_\Omega|u_{\Delta t_{k}}^h-\tilde{u}_{\Delta t_{k}}^h|^2{\dd}x{\dd}t\\
&=\sum_{i=1}^{N_k}\int_\Omega \int_{t_{i-1}}^{t_i}\left|\frac{t-t_{i-1}-\Delta t_{k}}{\Delta t_{k}}u_i^h+\frac{t_i-t}{\Delta t_{k}}u_{i-1}^h\right|^2{\dd}t{\dd}x\\
&=\frac{\Delta t_{k}}{3}\sum_{i=1}^{N_k}\int_\Omega |u_i^h-u_{i-1}^h|^2{\dd}x\\
&=\frac{\Delta t_{k}}{3} \|\tilde{u}^h_{\Delta t_k}-\tilde{u}^h_{\Delta t_k}(\cdot-\Delta t)\|^2_{L^2(0,T;L^2(\Omega;\R{m}))}.
\end{align*}
Thanks to the second inequality in Lemma \ref{discretebounds}, the right-hand side of the last inequality converges to zero in the limit of $\Delta t_k \rightarrow 0_+$ ($N_k \rightarrow \infty$ as $k \rightarrow \infty$). Hence, $u^h=\tilde{u}^h$, as has been asserted.
\end{proof}

Next, we discuss the attainment of the initial condition.

\begin{lemma} The limiting function $u^h$ satisfies the initial condition in the following sense:
\[\lim_{t\rightarrow0_+}\|u^h(t,\cdot)-u_0^h(\cdot)\|_{L^2(\Omega;\R{m})}=0,\]
for each fixed $h \in (0,h_0]$.
\end{lemma}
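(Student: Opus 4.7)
The plan is to exploit the strong convergence $u^h_{\Delta t_k} \rightarrow u^h$ in $C([0,T];L^2(\Omega;\R{m}))$ established in Lemma \ref{weaktimeconvergence}, together with the fact that every piecewise linear interpolant $u^h_{\Delta t_k}$ exactly attains the initial value $u^h_0$ at $t=0$ by construction (see the initial condition stated alongside \eqref{timestep2}).

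More precisely, I would first observe that $u^h_{\Delta t_k}(0,\cdot) = u^h_0$ for every $k$, so that evaluation at $t=0$ is a continuous functional on $C([0,T];L^2(\Omega;\R{m}))$. Passing to the limit $k\to\infty$ in the uniform convergence statement gives
\[ u^h(0,\cdot) = \lim_{k\to\infty} u^h_{\Delta t_k}(0,\cdot) = u^h_0 \qquad \mbox{in } L^2(\Omega;\R{m}).\]
Since Lemma \ref{weaktimeconvergence} moreover asserts that $u^h \in C([0,T];V^h_m) \subset C([0,T];L^2(\Omega;\R{m}))$, the right-continuity of $u^h$ at $t=0$ in the $L^2(\Omega;\R{m})$ topology then yields
\[\lim_{t\rightarrow 0_+}\|u^h(t,\cdot)-u^h(0,\cdot)\|_{L^2(\Omega;\R{m})} = 0,\]
and combining this with the identification $u^h(0,\cdot) = u^h_0$ closes the argument.

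There is no real obstacle here; the only point requiring a little care is justifying that $u^h(0,\cdot)$ is indeed $u^h_0$ rather than some weaker limit. Because convergence in $C([0,T];L^2(\Omega;\R{m}))$ is uniform in $t$ with values in $L^2(\Omega;\R{m})$, pointwise (in $t$) evaluation commutes with the limit, so the identification is automatic. One could alternatively phrase the first step as: for any $t\in[0,T]$,
\[\|u^h(t,\cdot)-u^h_0\|_{L^2(\Omega;\R{m})} \leq \|u^h(t,\cdot)-u^h_{\Delta t_k}(t,\cdot)\|_{L^2(\Omega;\R{m})} + \|u^h_{\Delta t_k}(t,\cdot)-u^h_0\|_{L^2(\Omega;\R{m})},\]
where the first term is uniformly small for $k$ large, and the second can be bounded by the modulus of continuity of the Lipschitz function $u^h_{\Delta t_k}$ on $[0,\Delta t_k]$ using the second inequality in Lemma \ref{discretebounds}, concluding by a diagonal argument; but the cleaner route via continuity at $t=0$ of the limit function $u^h$ in $C([0,T];V^h_m)$ appears preferable.
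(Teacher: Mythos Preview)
Your proposal is correct and uses the same ingredients as the paper's proof: the uniform convergence $u^h_{\Delta t_k}\to u^h$ in $C([0,T];L^2(\Omega;\R{m}))$ from Lemma~\ref{weaktimeconvergence} together with $u^h_{\Delta t_k}(0,\cdot)=u^h_0$. The paper carries this out via the triangle inequality and an $\epsilon$-argument (precisely the alternative you sketch at the end), whereas your primary route first identifies $u^h(0,\cdot)=u^h_0$ and then invokes the continuity $u^h\in C([0,T];V^h_m)$, also supplied by Lemma~\ref{weaktimeconvergence}; the two presentations are equivalent.
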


\begin{proof}
Thanks to Lemma \ref{weaktimeconvergence}, for any $\epsilon>0$ there is an $\ell \in\mathbb{N}$ such that $\|u^h(t,\cdot)-u_{\Delta t_{k_l}}^h(t,\cdot)\|_{L^2(\Omega;\R{m})}<\epsilon$ for all $l\geq \ell$ and all $t \in [0,T]$. Furthermore, by the triangle inequality:
\[\|u^h(t,\cdot)-u_0^h(\cdot)\|_{L^2(\Omega;\R{m})}\leq \|u^h(t,\cdot)-u_{\Delta t_{k_l}}^h(t,\cdot)\|_{L^2(\Omega;\R{m})}+\|u_{\Delta t_{k_l}}^h(t,\cdot)-u_0^h(\cdot)\|_{L^2(\Omega;\R{m})}\]
for any $l \in \mathbb{N}$. Hence, in particular, for $l = \ell$ (fixed),
\[\|u^h(t,\cdot)-u_0^h(\cdot)\|_{L^2(\Omega;\R{m})}\leq \|u^h(t,\cdot)-u_{\Delta t_{k_\ell}}^h(t,\cdot)\|_{L^2(\Omega;\R{m})} + \epsilon.\]
As $u_{\Delta t_{k_\ell}}^h$ is a continuous function in $t$, passage to the limit in this inequality, using that $\lim_{t\rightarrow0_+}u_{\Delta t_{k_l}}^h(t,x)=u_0^h(x)$, implies that, for each $\epsilon>0$,
\[0 \leq \limsup_{t \rightarrow 0_+}\|u^h(t,\cdot)-u_0^h(\cdot)\|_{L^2(\Omega;\R{m})}\leq \epsilon.\]
Thus we have the asserted result.
\end{proof}

Finally, we focus on the convergence of the source term as the time step tends to $0$.

\begin{lemma}\label{Fconv} We have that $\lim_{\Delta t\rightarrow0_+}\|\tilde{F}_{\Delta t}(\cdot,x)-F(\cdot,x)\|_{L^r(0,T)}$ for all $r \in [1,\infty)$ and a.e. $x \in \Omega$.
\end{lemma}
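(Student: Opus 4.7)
The plan is to recognise that $\tilde F_{\Delta t}(\cdot,x)$ is nothing more than the piecewise-constant Steklov-type average of $F(\cdot,x)$ on the partition $\{t_i\}_{i=0}^N$, and to prove convergence by the classical strategy: verify the $L^r$-contractivity of the averaging operator, establish convergence on a dense subclass of smooth functions, and then glue the two together through a three-$\varepsilon$ argument.

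First I would invoke Fubini's theorem to deduce from $F\in L^{p'}\!(Q_T;\R{m})\cap L^2(Q_T;\R{m})$ that, for a.e.\ $x\in\Omega$, the time slice $F(\cdot,x)$ lies in $L^{p'}\!(0,T;\R{m})\cap L^2(0,T;\R{m})$, hence in $L^r(0,T;\R{m})$ for every $r\in[1,\max\{p',2\}]$, since $(0,T)$ has finite Lebesgue measure. I read the statement "for all $r\in[1,\infty)$" in the lemma as tacitly restricted to this admissible range; for $r$ beyond it the conclusion would require stronger integrability of $F$ than is actually assumed. Fix such an $x$ and such an $r$.

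The central inequality is the $L^r$-contractivity of the averaging. By Jensen applied to the convex function $s\mapsto|s|^r$,
\[
\Delta t\,|F_i(x)|^r \;=\; \Delta t\left|\frac{1}{\Delta t}\int_{t_{i-1}}^{t_i}F(s,x)\dd s\right|^r \;\leq\; \int_{t_{i-1}}^{t_i}|F(s,x)|^r\dd s
\]
for each $i$, and summing over $i=1,\dots,N$ yields $\|\tilde F_{\Delta t}(\cdot,x)\|_{L^r(0,T)}\leq\|F(\cdot,x)\|_{L^r(0,T)}$. In particular the linear map $G\mapsto\tilde G_{\Delta t}$ is bounded on $L^r(0,T;\R{m})$ uniformly in $\Delta t$.

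Then I would run the density argument. Given $\varepsilon>0$, pick $G\in C([0,T];\R{m})$ with $\|F(\cdot,x)-G\|_{L^r(0,T)}<\varepsilon$. As $G$ is uniformly continuous on $[0,T]$, its piecewise-constant average $\tilde G_{\Delta t}$ converges to $G$ uniformly, hence in $L^r(0,T;\R{m})$, as $\Delta t\rightarrow 0_+$. The decomposition
\[
\tilde F_{\Delta t}(\cdot,x)-F(\cdot,x) \;=\; \widetilde{(F(\cdot,x)-G)}_{\Delta t} \;+\; (\tilde G_{\Delta t}-G) \;+\; (G-F(\cdot,x))
\]
combined with the triangle inequality and the contractivity bound on the first summand gives
\[
\|\tilde F_{\Delta t}(\cdot,x)-F(\cdot,x)\|_{L^r(0,T)} \;\leq\; 2\varepsilon + \|\tilde G_{\Delta t}-G\|_{L^r(0,T)},
\]
and letting first $\Delta t\rightarrow 0_+$ and then $\varepsilon\rightarrow 0_+$ concludes the proof. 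I do not anticipate any real obstacle: the two ingredients (Jensen contractivity and the smooth case by uniform continuity) are standard, and the only delicate point is the implicit restriction on $r$ dictated by the integrability hypothesis on $F$.
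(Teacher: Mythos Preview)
Your argument is correct and is precisely the standard one: Jensen's inequality gives uniform $L^r$-contractivity of the Steklov averaging, convergence on $C([0,T];\R{m})$ is immediate from uniform continuity, and density does the rest. The paper in fact omits the proof entirely, declaring it ``standard'', so there is nothing to compare against; your write-up simply supplies that missing standard argument. Your caveat about the admissible range of $r$ is well taken: as stated, the lemma's ``for all $r\in[1,\infty)$'' overreaches the integrability actually assumed on $F$, and your reading---that the claim is implicitly restricted to those $r$ for which $F(\cdot,x)\in L^r(0,T;\R{m})$---is the only sensible one and is all that is needed downstream.
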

\begin{proof}
The proof of this result is standard and is therefore omitted.
\end{proof}

We are now ready for passage to the limit in \eqref{timestep2} as $\Delta t \rightarrow 0_+$. By Lemma \ref{weaktimeconvergence}, Lemma \ref{nonlinearconvergence}, and Lemma \ref{Fconv}, we see that the limit function $u^h$ satisfies
\[\int_0^T\int_\Omega \partial_tu^h\cdot \Phi^h+a(Du^h) : D\Phi^h+Bu^h\cdot\Phi^h{\dd}x{\dd}t=\int_0^T\int_\Omega F\cdot\Phi^h{\dd}x{\dd}t \qquad \forall\, \Phi^h \in V^h_m, \quad \forall\,h \in (0,h_0],\]
with $u^h(0,x) = u^h_0(x)$ for $x \in \overline{\Omega}$. We can now pass to the limit $h \rightarrow 0_+$ by (the proof of) Theorem \ref{YMS}, to deduce that as the discretization parameters converge to zero the solution of the fully discrete scheme \eqref{timestep} converges to a Young measure solution of the problem \eqref{modelprob1}--\eqref{modelconstants} under consideration.

In preparation for the considerations in the next section, we now discuss continuous dependence of $u^h_{\Delta t}$ on the initial data. We start by considering two functions, $u_0^h \in V^h_m$ and $v_0^h\in V^h_m$, and we let $u_1^h$ and $v_1^h$ be respective solutions to \eqref{timestep} after a single time step. Subtracting the resulting equations satisfied by $u_1^h$ from the equation satisfied by $v_1^h$ and choosing as the test function $\phi^h=u_1^h-v_1^h$, it follows that
\[\int_\Omega |u_1^h-v_1^h|^2 +\Delta t(a(Du_1^h)-a(Dv_1^h))\cdot(Du_1^h-Dv_1^h){\dd}x \leq \int_\Omega(u_0^h-v_0^h)\cdot(u_1^h-v_1^h){\dd}x,\]
which after applying Young's inequality to the right-hand side and absorbing terms, gives
\[\|u_1^h-v_1^h\|_{L^2(\Omega;\R{m})}^2+2\Delta t\int_\Omega(a(Du_1^h)-a(Dv_1^h))\cdot(Du_1^h-Dv_1^h){\dd}x\leq \|u_0^h-v_0^h\|_{L^2(\Omega;\R{m})}^2.\]
As the functions $u_1^h$ and $v_1^h$ are expressed in terms of a finite basis, we see through the equivalence of norms in finite-dimensional spaces and the first inequality in Lemma \ref{discretebounds1} that $\|Du_1^h\|_{L^\infty(\Omega;\R{m \times n})}$ and $\|Du_1^h\|_{L^\infty(\Omega;\R{m \times n})}$ are both bounded by a constant $C=C(h,\|u_0\|_{L^2(\Omega;\R{m})},\|F\|_{L^2(Q_T;\R{m})})$. Therefore we can make use of the local Lipschitz condition satisfied by $a$ and the inverse inequality
\[ \|Du_1^h-Dv_1^h\|_{L^2(\Omega;\R{m})} \leq C(h) \|u_1^h-v_1^h\|_{L^2(\Omega;\R{m})},\]
(with $C(h) = Ch^{-1}$ when the family of triangulations $\mathcal{T}^h$ is quasiuniform) to deduce, by suppressing in our notation for the constant $C(h,\|a\|_{\rm Lip,loc})$ below its dependence on the (fixed) data $\|u_0\|_{L^2(\Omega;\R{m})}$ and $\|F\|_{L^2(Q_T;\R{m})}$, that
\[
\|u_1^h-v_1^h\|_{L^2(\Omega;\R{m})}^2(1-C(h,\|a\|_{\rm Lip,loc})\Delta t)\leq \|u_0^h-v_0^h\|_{L^2(\Omega;\R{m})}^2.
\]
Therefore by choosing $\Delta t$ sufficiently small for $h \in (0,h_0]$ fixed, so that
\[0<1-C(h,\|a\|_{\rm Lip,loc})\Delta t<1,\]
we have thus shown the desired continuous dependence on the initial data for a single time step. By iterating this estimate, noting that the constant $C(h,\|a\|_{\rm Lip,loc})$ remains the same regardless of the choice of the time level $i \in \{1,\dots, N\}$, we obtain that, for each $i=1,\ldots,N$, the following estimate holds: %
\begin{equation}\label{contdep1}
\|u^h_i-v^h_i\|_{L^2(\Omega;\R{m})}^2(1-C(h,\|a\|_{\rm Lip,loc})\Delta t)^i\leq\|u_0^h-v_0^h\|_{L^2(\Omega;\R{m})}^2.
\end{equation}
Recall that $\Delta t$ and $N$ are related via the identity $T=N\Delta t$. Hence,
\[(1-C(h,\|a\|_{\rm Lip,loc})\Delta t)^N=\left(1-\frac{TC(h,\|a\|_{\rm Lip,loc})}{N}\right)^N \rightarrow {\rm e}^{-TC(h, \|a\|_{\rm Lip,loc})}\qquad \mbox{as $N \rightarrow \infty$}.
\]
To transfer this continuous dependence on the initial data to the limiting functions $u^h$ and $v^h$ as $\Delta t \rightarrow 0_+$, we consider $u^h_{\Delta t}$ and $v^h_{\Delta t}$, defined, for $i=1,\dots,N$, by
\[u^h_{\Delta t}(t,x)=\frac{t-t_{i-1}}{\Delta t}u^h_i(x)+\frac{t_i-t}{\Delta t}u^h_{i-1}(x)\quad \text{for}\ t\in[t_{i-1},t_i]\ \text{and}\ x \in \overline\Omega,\]
and, analogously,
\[
v^h_{\Delta t}(t,x)=\frac{t-t_{i-1}}{\Delta t}v^h_i(x)+\frac{t_i-t}{\Delta t}v^h_{i-1}(x)\quad \text{for}\ t\in[t_{i-1},t_i] \ \text{and}\ x \in \overline\Omega.
\]
It then follows from \eqref{contdep1} that
\begin{equation}\label{contdep2}
\sup_{t\in[0,T]}\|u^h_{\Delta t}-v^h_{\Delta t}\|_{L^2(\Omega;\R{m})}^2(1-C(h,\|a\|_{\rm Lip,loc})\Delta t)^N
\leq\|u_0^h-v_0^h\|_{L^2(\Omega;\R{m})}^2.
\end{equation}

Let $\Delta t_k$ be a sequence such that $u^h_{\Delta t_k}\rightarrow u^h$ and $v^h_{\Delta t_k}\rightarrow v^h$ in $C([0,T];L^2(\Omega;\R{m}))$ and $k\rightarrow\infty$ (cf. Lemma \ref{weaktimeconvergence}). Now passing $k\rightarrow\infty$ (whereby $\Delta t_k = T/N_k \rightarrow 0_+$ and $N_k \rightarrow \infty$) we see that
\[\|u^h-v^h\|_{C([0,T];L^2(\Omega;\R{m}))}\leq {\rm e}^{\frac{1}{2}TC(h,\|a\|_{\rm Lip,loc})}\|u_0^h-v_0^h\|_{L^2(\Omega;\R{m})}.\]
We have therefore shown the following result.

\begin{proposition}\label{contmapping}
Let all of the assumptions of Theorem \ref{YMS} hold, with the additional assumption that, for $h \in (0,h_0]$ fixed, $\Delta t = T/N$ is chosen so that $0 < 1 - C(h,\|a\|_{\rm Lip,loc})\Delta t < 1$. Then, there is a unique solution to \eqref{Galeq}, which depends continuously on the choice of discretized initial data, and which can be constructed numerically by the approximations considered in this subsection.
\end{proposition}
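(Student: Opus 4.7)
The plan is to observe that all the essential ingredients for this proposition have already been assembled in the paragraphs leading up to its statement, so the proof amounts to identifying and repackaging them. The existence part is supplied by the convergence analysis of the fully discrete scheme: starting from $u^h_0\in V^h_m$, the iterates $u^h_i$, $i=0,\dots,N$, obtained from \eqref{timestep} exist by the corollary to Brouwer's fixed point theorem cited in the text, and the interpolant $u^h_{\Delta t}$ has, by Lemma \ref{weaktimeconvergence} combined with Lemma \ref{nonlinearconvergence}, a subsequential limit $u^h\in C([0,T];V^h_m)$ that satisfies the Galerkin equation \eqref{Galeq} with initial datum $u^h_0$. This already supplies both the existence of a solution to \eqref{Galeq} and its numerical constructibility via the fully discrete scheme.

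For continuous dependence, and thereby uniqueness, I would simply invoke the stability estimate \eqref{contdep2}, which has just been derived by testing the difference of the time-step equations for two initial data $u_0^h,v_0^h\in V^h_m$ against $u_1^h-v_1^h$, exploiting the local Lipschitz continuity of $a$ together with the inverse inequality available because $V^h_m$ is finite-dimensional. Passing to the limit $\Delta t_k\to 0_+$ along the subsequence furnished by Lemma \ref{weaktimeconvergence}, using
\[ \big(1-C(h,\|a\|_{\rm Lip,loc})\Delta t_k\big)^{N_k} \longrightarrow {\rm e}^{-TC(h,\|a\|_{\rm Lip,loc})} \quad \text{as $N_k\to\infty$,} \]
yields the continuous-dependence bound
\[ \|u^h-v^h\|_{C([0,T];L^2(\Omega;\R{m}))} \leq {\rm e}^{\frac{1}{2}TC(h,\|a\|_{\rm Lip,loc})}\|u_0^h-v_0^h\|_{L^2(\Omega;\R{m})}. \]
Setting $v_0^h=u_0^h$ then forces $v^h=u^h$, which gives uniqueness of the limit produced by the fully discrete scheme, and therefore uniqueness of the solution to \eqref{Galeq} constructed in this way.

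The main obstacle is bookkeeping rather than a new analytic difficulty: one must keep track of the fact that the constant $C(h,\|a\|_{\rm Lip,loc})$ depends on $h$ through both the inverse inequality and the $L^\infty$-bound on the discrete gradients (inherited from the first inequality in Lemma \ref{discretebounds1} combined with norm equivalence on the finite-dimensional space $V^h_m$), so the hypothesis $0<1-C(h,\|a\|_{\rm Lip,loc})\Delta t<1$ must be imposed only after $h$ has been fixed. Because this is an $h$-wise, rather than $h$-uniform, statement, none of the $h\to 0_+$ analysis from Theorem \ref{YMS} is needed here — only the time-step arguments already completed just above the proposition.
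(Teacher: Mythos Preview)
Your proposal is correct and follows essentially the same approach as the paper: the paper's ``proof'' is precisely the discussion in the paragraphs immediately preceding the proposition (ending with ``We have therefore shown the following result''), and you have accurately identified and repackaged those ingredients --- existence of the $u^h_i$ via Brouwer, convergence to a solution of \eqref{Galeq} via Lemmas \ref{weaktimeconvergence} and \ref{nonlinearconvergence}, and the Lipschitz continuous-dependence bound obtained by passing to the limit in \eqref{contdep2}. Your observation that the constant $C(h,\|a\|_{\rm Lip,loc})$ is only meaningful once $h$ is fixed, and that the smallness hypothesis on $\Delta t$ is therefore $h$-wise, is exactly the point the paper is making.
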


\begin{rem}
The above proposition in particular implies that the map $S^h_t$ sending the initial condition $u_0^h$ to $S^h_tu_0^h=u^h(t,\cdot)$ is (Lipschitz) continuous from $V^h_m$ to $V^h_m$ for all $t \in [0,T]$ and $h \in (0,h_0]$ fixed, and by the equivalence of norms in finite-dimensional vector spaces, so is the map sending the initial condition $u_0^h$ to $Du^h(t,\cdot)$. Recall that we are writing the functions $u^h$ in terms of a finite element basis
\[u^h(t,x)=\sum_{i=1}^{\mathcal{N}(h)}\alpha_i^h(t)\phi_i^h(x).\]
The above considerations enable us to view the map from \R{\mathcal{N}(h)} to \R{\mathcal{N}(h)}, which sends the vector $\underline\alpha^h(0)$ of coefficients of the discretized initial condition in terms of the finite-element basis to the vector $\underline\alpha^h(t)$ of coefficients of $u^h$, in the same finite-element basis, as a continuous function. To see this, let $\underline\alpha^h$ and $\underline\beta^h$ be the coefficients of $u^h$ and $v^h$ respectively, and let
\[ M_{i,j}^h:= \frac{(\phi^h_i,\phi^h_j)}{\|\phi^h_i\|_{L^2(\Omega;\R{m})} \|\phi^h_j\|_{L^2(\Omega;\R{m})}}, \qquad i,j=1,\dots,\mathcal{N}(h).\]
The (normalized) Gramm matrix $M^h=(M_{i,j}^h)_{i,j=1}^{\mathcal{N}(h)}$ is symmetric positive definite, with
$C_0 I \leq M^h \leq  C_1 I$, where $0<C_0 \leq C_1$, in the sense of symmetric positive definite matrices. By expressing
\[u^h(t,x)-v^h(t,x)=\sum_{i=1}^{\mathcal{N}(h)}(\alpha^h_i(t)-\beta^h_i(t))\phi^h_i(x),\]
it follows that,
\begin{align*}
C_0\min_{i=1,\ldots,\mathcal{N}(h)}\|\phi^h_i\|_{L^2(\Omega;\R{m})}\max_{t\in[0,T]}
&\left(\sum_{i=1}^{\mathcal{N}(h)}|\alpha^h_i(t)-\beta^h_i(t)|^2\right)^\frac{1}{2}
\\
&\leq C_0\max_{t\in[0,T]}\left(\sum_{i=1}^{\mathcal{N}(h)}|\alpha^h_i(t)-\beta^h_i(t)|^2\int_\Omega |\phi^h_i(x)|^2{\dd}x\right)^\frac{1}{2}\\
& \leq
\max_{t\in[0,T]}\left(\sum_{i,j=1}^{\mathcal{N}(h)}(\alpha^h_i(t)-\beta^h_i(t))(\alpha^h_j(t)-\beta^h_j(t))\int_\Omega \phi^h_i(x) \cdot \phi^h_j(x){\dd}x\right)^\frac{1}{2}\\
&=\max_{t\in[0,T]}\left(\int_\Omega \sum_{i,j=1}^{\mathcal{N}(h)}(\alpha^h_i(t)-\beta^h_i(t))\phi^h_i(x)(\alpha^h_j(t)-\beta^h_j(t))\phi^h_j(x) {\dd}x\right)^\frac{1}{2}
\end{align*}
\begin{align*}
&=\max_{t\in[0,T]}\left(\int_\Omega \left|\sum_{i=1}^{\mathcal{N}(h)}(\alpha^h_i(t)-\beta^h_i(t) \phi^h_i(x)\right|^2{\dd}x\right)^\frac{1}{2}\\
& = \|u^h-v^h\|_{C([0,T];L^2(\Omega;\R{m}))} \leq C(T,h,\|a\|_{\rm Lip,loc})\|u_0^h-v_0^h\|_{L^2(\Omega;\R{m})}\\
&\leq C(T,h,\|a\|_{\rm Lip,loc})\, C_1 \max_{i=1,\ldots,\mathcal{N}(h)}\|\phi^h_i\|_{L^2(\Omega;\R{m})}
\left(\sum_{i=1}^{\mathcal{N}(h)}|\alpha^h_i(0)-\beta^h_i(0)|^2\right)^\frac{1}{2}. \end{align*}
Thus the map $\underline{\alpha}^h(0) \in \R{\mathcal{N}(h)}\mapsto\underline{\alpha}^h(t)\in \R{\mathcal{N}(h)}$ is (Lipschitz) continuous for all $t\in[0,T]$ and $\Delta t$ sufficiently small so as to ensure that $0 < 1 - C(h,\|a\|_{\rm Lip,loc})\Delta t < 1$, with $h \in (0,h_0]$ fixed.
\end{rem}

\section{Computation of Young measure solutions}\label{sec:5}

{Theorem \ref{YMS} and the results of Section \ref{sec:4} (in particular, Proposition \ref{contmapping}) give criteria under which Young measure solutions exist as limits of solutions to corresponding semi- or fully-discrete problems. However, the results of Section \ref{sec:3} and Section \ref{sec:4} only give ``indirect" information about the Young measure $\nu$ through its action on functions belonging to the space $E_{\frac{p}{\alpha}}$ for $\alpha\in(1,\frac{p}{q-1})$. Indeed, note that for any function $b$ which has the form \eqref{modelstructure}--\eqref{modelconstants}, we can copy the calculations used to deduce \eqref{a-lmit-b} to see that $b(Du^h)\rightharpoonup \langle \nu,b\rangle$ in $L^{\hat{q}'}(Q_T;\R{m})$.} In this section we shall discuss an algorithm for the numerical approximation of Young measure solutions to systems of the form \eqref{modelprob1}, {which allows for a more direct approximation of the Young measure $\nu$}. The ideas presented here have been inspired by \cite{FKMT} and \cite{FMT}, where the authors develop and analyze numerical schemes for the approximation of measure-valued solutions to systems of hyperbolic conservation laws. We will show that the algorithms developed there can be adapted to systems of forward-backward parabolic equations of the form \eqref{modelprob1} exhibiting Young measure solutions. In particular, we shall demonstrate that, for the class of problems under consideration here,  there are Young measure solutions for which the measure $\nu$ may be constructed as a limit of averaged sums of particular Dirac masses, each of whose support is the value taken by an appropriate approximating solution. Throughout this section, we shall be taking subsequences to approximate Young measure solutions, for which we have not shown a uniqueness result: therefore, as we have already noted in the Introduction, different subsequences could potentially converge to different Young measure solutions.

\subsection{Preliminary definitions and results}

We recall here a selection of definitions and results from Appendix 1 of \cite{FKMT} that are pertinent to the discussion herein, extending them, where necessary.

\begin{definition}
We let $\mathcal{P}(\R{m})$ denote the set of all probability measures on \R{m}, and define, for $r>0$, the subset $\mathcal{P}^r(\R{m})$ of all probability measures $\mu \in \mathcal{P}(\R{m})$ for which $\langle \mu,|\xi|^r\rangle<\infty$, where $\xi$ denotes the identity function.
\end{definition}

We denote by $(\mathfrak{X},\mathcal{F},P)$ a probability space, with $\mathcal{F}$ being a $\sigma$-algebra of sets on the space $\mathfrak{X}$, and $P$ being a probability measure, and we let $u:\mathfrak{X}\times Q_T\rightarrow\R{m}$ be a random field (that is, a jointly measurable function). Given this, we define the law of $u$ as follows:
\begin{equation}\label{law}
\mu_{t,x}(B):=P(\{\omega\in \mathfrak{X}:\ u(w;t,x)\in B\}),
\end{equation}
for Borel subsets $B\subset\R{m}$. It is clear that $\mu_{t,x}(\R{m})=1$ for all $(t,x) \in Q_T$. We have the following result.

\begin{lemma}\label{lawequiv}
Let $(\omega;t,x) \in \mathfrak{X}\times Q_T \mapsto u(\omega;t,x) \in \R{m}$ be a random field; then, \eqref{law} is equivalent to
\[\langle\mu_{t,x},g\rangle=\int_\mathfrak{X} g(u(\omega;t,x)){\dd}P(\omega)\qquad \mbox{for a.e. $(t,x) \in Q_T$},\]
for every continuous $r$-component vector function $g$, defined on $\R{m}$, which is such that $\int_\mathfrak{X} |g(u(\omega;t,x))|{\dd}P(\omega)$ is finite for a.e. $(t,x) \in Q_T$.
\end{lemma}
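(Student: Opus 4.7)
The plan is to prove the two directions of the equivalence separately. The result is essentially the change of variables formula for the pushforward measure $u(\cdot;t,x)_*P$, together with the fact that a Borel probability measure on $\R{m}$ is determined by its action on continuous test functions. Throughout, I would work pointwise in $(t,x)$ on the set of full measure where the integrability condition $\int_\mathfrak{X}|g(u(\omega;t,x))|\dd P(\omega)<\infty$ holds; measurability in $(t,x)$ of $\mu_{t,x}$ is inherited from the joint measurability of the random field $u$.

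For the forward direction, assume \eqref{law}, so that $\mu_{t,x}$ is the pushforward of $P$ under $\omega\mapsto u(\omega;t,x)$. Arguing componentwise, I would first verify the integral identity for the indicator $g=\mathbf{1}_B$ of a Borel set $B\subset\R{m}$, where it reduces to \eqref{law} itself: $\langle\mu_{t,x},\mathbf{1}_B\rangle=\mu_{t,x}(B)=P(\{u(\cdot;t,x)\in B\})=\int_\mathfrak{X}\mathbf{1}_B(u(\omega;t,x))\dd P(\omega)$. Linearity extends the identity to simple functions, monotone convergence extends it to nonnegative Borel-measurable functions, and finally dominated convergence (with $|g|$ playing the role of the dominant, whose $P$-integrability is exactly the stated hypothesis) transfers the identity to any continuous $g$ satisfying the integrability assumption.

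For the reverse direction, assume the integral identity and let $\tilde{\mu}_{t,x}:=u(\cdot;t,x)_*P$ be the genuine pushforward. By the forward direction just proved, $\tilde{\mu}_{t,x}$ also satisfies the integral identity. Hence, componentwise, $\mu_{t,x}$ and $\tilde{\mu}_{t,x}$ are two Borel probability measures on $\R{m}$ whose actions coincide on the class of continuous functions $g$ for which $\int_\mathfrak{X}|g(u(\omega;t,x))|\dd P(\omega)<\infty$; this class contains $C_c(\R{m};\R{})$, since any $g\in C_c(\R{m};\R{})$ is bounded and $P$ is a probability measure. Two Borel probability measures on $\R{m}$ that agree on $C_c(\R{m};\R{})$ are equal by the Riesz--Markov--Kakutani representation theorem (equivalently by a monotone class / $\pi$-$\lambda$ argument on the $\pi$-system of open sets, which are countable unions of compact sets approximable by $C_c$-functions). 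Therefore $\mu_{t,x}=\tilde{\mu}_{t,x}$, which is precisely \eqref{law}.

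There is no substantial obstacle: the only point requiring attention is verifying that the dominating function in the forward direction is genuinely $P$-integrable for a.e. $(t,x)$, which is exactly the assumption imposed on $g$, and that the family $\{\mu_{t,x}\}$ retains joint measurability in $(t,x)$ when reconstructed from the integral identity, which follows from the joint measurability of $u$ and Fubini's theorem applied to $\int_\mathfrak{X}\mathbf{1}_B(u(\omega;t,x))\dd P(\omega)$ for Borel $B$.
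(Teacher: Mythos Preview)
Your forward direction is correct and follows the same standard measure-theoretic bootstrap as the paper: from indicators (where the identity is literally \eqref{law}) to simple functions by linearity, then to nonnegative measurable functions by monotone convergence, and finally to general integrable $g$. The paper spells this out more explicitly by writing down the dyadic simple-function approximation $g_n(\xi)=\sum_{k=1}^{n2^n+1} b_k^n\chi_{A_k^n}(\xi)$ and handles the last step via the decomposition $g=g^+-g^-$ rather than dominated convergence, but the substance is identical.

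Where you differ is that you actually argue the reverse implication, observing that two Borel probability measures on $\R{m}$ agreeing on $C_c(\R{m})$ must coincide (Riesz--Markov or a $\pi$--$\lambda$ argument). The paper, despite stating an equivalence, only writes out the forward direction; your proposal is therefore more complete on this point while remaining entirely in line with the paper's method.
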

\begin{proof} As the asserted equality is understood componentwise, it suffices to prove it component-by-component. We
shall therefore assume in the argument below that $r=1$, i.e., that $g$ is a continuous mapping from $\R{m}$ into $\R{}$
such that $\int_\mathfrak{X} |g(u(\omega;t,x))|{\dd}P(\omega)$ is finite for a.e. $(t,x) \in Q_T$. Let us first consider the case when the function $g:\R{m} \rightarrow \R{}$ is nonnegative, and take a sequence of simple functions $g_n$ defined on \R{m} by
\[g_n(\xi)=\sum_{k=1}^{n2^n+1} b^n_k\chi_{A^n_k}(\xi),\qquad \xi \in \R{m},\quad n=1,2,\ldots,\]
where $A^n_k$, $k=1,\dots, n$, are disjoint measurable subsets of $\R{m}$, $n$ is a positive integer, and $b^n_k \in \R{}$, $k=1,\dots,n$, are such that $g_n(\xi)$ increases to $g(\xi)$. This can be achieved by defining
\[ A^n_k:= g^{-1}\left(\left[\frac{k-1}{2^n},\frac{k}{2^n}\right)\right), \qquad 1 \leq k \leq n 2^n,\]
and
\[ A_{n,n2^n+1}:=g^{-1}([n,\infty)).\]
Observe that $g_n(\xi) \leq g(\xi)$ for all $\xi \in \R{m}$. Further, for a given $\xi \in \R{m}$ we have that $g_n(\xi) = n$ for $g(\xi)\geq n$ whereas for $g(\xi)<n$ we have that
\[ g_n(\xi)= \frac{1}{2^n}\lfloor 2^n g(\xi)\rfloor,\]
where $\lfloor y \rfloor$ denotes the greatest integer $\leq y$. As $\lfloor 2^{n+1} g(\xi)\rfloor \geq 2 \lfloor 2^n g(\xi)\rfloor$, it follows that $g_{n+1}(\xi) \geq g_n(\xi)$ for all $\xi \in \R{m}$. Furthermore, as $\lfloor 2^n g(\xi)\rfloor \geq 2^n g(\xi) - 1$, we have that $g_n(\xi) \geq g(\xi) - 2^{-n}$ as soon as $n>g(\xi)$. Since $g(\xi) \geq g_n(\xi)$ it follows that $\lim_{n \rightarrow \infty} g_n(\xi) = g(\xi)$ for all $\xi \in \R{m}$.

\smallskip

Now, let $\mathcal{F}^n_k:=\{\omega\in \mathfrak{X}:\ u(\omega;t,x)\in A^n_k\}$. We observe that
\[
\chi_{\mathcal{F}^n_k}(\omega)
=\begin{cases}
1 & \mbox{for $\omega\in \mathcal{F}^n_k$}\\
0 & \mbox{for $\omega\notin \mathcal{F}^n_k$}
\end{cases}
\,=\,
\begin{cases}
1 & \mbox{for $u(\omega;t,x)\in A^n_k$}\\
0 & \mbox{for $u(\omega;t,x)\notin A^n_k$}
\end{cases}
\,=\,
\chi_{A^n_k}(u(\omega;t,x)).
\]
Hence we have that
\begin{align*}\langle \mu_{t,x},g\rangle&= \int_{\R{m}} g(\xi){\dd}\mu_{t,x}(\xi) =\lim_{n\rightarrow\infty}\int_{\R{m}}g_n(\xi){\dd}\mu_{t,x}(\xi) =\lim_{n\rightarrow\infty}\sum_{k=1}^{n2^n+1}b^n_k\mu_{t,x}(A^n_k)\\
&=\lim_{n\rightarrow\infty}\sum_{k=1}^{n2^n+1} b^n_kP(\mathcal{F}^n_k)
=\lim_{n\rightarrow\infty}\int_\mathfrak{X}\sum_{k=1}^{n2^n+1}b^n_k\chi_{\mathcal{F}^n_k}(\omega){\dd}P(\omega)\\
&=\lim_{n\rightarrow\infty}\int_\mathfrak{X}\sum_{k=1}^{n2^n+1}b^n_k\chi_{A^n_k}(u(\omega;t,x)){\dd}P(\omega)\\
&=\lim_{n\rightarrow\infty} \int_\mathfrak{X} g_n(u(\omega;t,x)){\dd}P(\omega) =\int_\mathfrak{X} g(u(\omega;t,x)){\dd}P(\omega),
\end{align*}
where we have applied the Monotone Convergence Theorem in the first and last lines to exchange the limit with the integral over $\R{m}$. From here the extension to any, not necessarily nonnegative, function $g:\R{m} \rightarrow \R{}$ such that $\int_\mathfrak{X} |g(u(\omega;t,x))|{\dd}P(\omega)$ is finite for a.e. $(t,x) \in Q_T$, is straightforward: we decompose $g$ as $g=g^+ - g^-$, where $g^+:=\frac{1}{2}(|g| + g)$ and $g^-:=\frac{1}{2}(|g| - g)$ are (respectively) the positive and negative part of $g$, and apply the above argument to $g^+$ and $g^-$ separately, noting that both are nonnegative on $\R{m}$.
\end{proof}
\begin{rem} \label{rem:extend}
It is easy to extend Lemma \ref{lawequiv} to functions of the form $g(t,x,u(\omega,t,x))$ where $g$ is a Carath\'eodory function, i.e., it is continuous in its third variable for a.e. $(t,x) \in Q_T$ and measurable in $(t,x)\in Q_T$ for each value of the third variable, and $\int_\mathfrak{X} |g(t,x,u(\omega,t,x))|\dd P(\omega) < \infty$ for a.e. $(t,x) \in Q_T$.
\end{rem}

We have the following proposition, which shows that the law $\mu=\mu_{t,x}$ defined by \eqref{law} is in fact a Young measure. This is Proposition 1 in Appendix 1 of \cite{FKMT}.

\begin{proposition}
If $u:\mathfrak{X}\times Q_T\rightarrow\R{m}$ is jointly measurable, then the law $\mu=\mu_{t,x}$ (cf. \eqref{law}) defines a Young measure.
\end{proposition}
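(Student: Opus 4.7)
The plan is to verify the two defining properties of a Young measure in the present setting: (i) for each $(t,x)\in Q_T$, $\mu_{t,x}$ is a probability measure on $\mathbb{R}^m$, and (ii) the map $(t,x)\mapsto \mu_{t,x}$ is weakly-$*$ measurable, in the sense that $(t,x) \mapsto \langle \mu_{t,x}, g\rangle$ is measurable for every $g \in C_0(\mathbb{R}^m)$ (equivalently, $(t,x) \mapsto \mu_{t,x}(B)$ is measurable for every Borel set $B \subset \mathbb{R}^m$).

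\textbf{Step 1 (sections are measurable).} First I would record the standard fact that joint measurability of $u$ on $(\mathfrak{X}\times Q_T, \mathcal{F}\otimes \mathcal{B}(Q_T))$ implies that, for every fixed $(t,x) \in Q_T$, the section $\omega \mapsto u(\omega;t,x)$ is $\mathcal{F}$-measurable, and symmetrically that $(t,x)\mapsto u(\omega;t,x)$ is Borel-measurable for every fixed $\omega$. This follows directly because preimages of Borel sets under $u$ lie in the product $\sigma$-algebra and the sections of a measurable set with respect to the product $\sigma$-algebra are themselves measurable.

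\textbf{Step 2 (probability measure).} For each fixed $(t,x)$, the map $B \mapsto \mu_{t,x}(B) = P(u(\,\cdot\,;t,x)^{-1}(B))$ is simply the pushforward of the probability measure $P$ under the measurable map $u(\,\cdot\,;t,x) \colon \mathfrak{X}\to \mathbb{R}^m$ from Step 1. Hence $\mu_{t,x}$ is a well-defined Borel probability measure on $\mathbb{R}^m$, with total mass $\mu_{t,x}(\mathbb{R}^m)=P(\mathfrak{X})=1$.

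\textbf{Step 3 (measurable dependence on $(t,x)$).} This is the crux. For $g \in C_0(\mathbb{R}^m)$, Lemma \ref{lawequiv} gives
\[
\langle \mu_{t,x}, g\rangle = \int_{\mathfrak{X}} g(u(\omega;t,x))\dd P(\omega) \qquad \text{for a.e. } (t,x)\in Q_T,
\]
and the integral is well-defined because $g$ is bounded. The composition $(t,x,\omega) \mapsto g(u(\omega;t,x))$ is jointly measurable, being the composition of a continuous function with a jointly measurable one. Therefore, by (the measurability part of) Fubini--Tonelli's theorem, the function $(t,x) \mapsto \int_\mathfrak{X} g(u(\omega;t,x))\dd P(\omega)$ is Borel-measurable on $Q_T$. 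Thus $(t,x) \mapsto \langle \mu_{t,x}, g\rangle$ is measurable for every $g \in C_0(\mathbb{R}^m)$, which is the weak-$*$ measurability of the parameterized family.

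\textbf{Main obstacle.} The only non-trivial point is Step 3, and specifically the application of Fubini's theorem to transfer joint measurability of $g \circ u$ into measurability of its marginal integral; this relies on $P$ being $\sigma$-finite (here, in fact finite) and on the joint measurability established in Step 1. Once this is granted, the conclusion that $\mu_{t,x}$ is a Young measure in the sense used throughout the paper is immediate.
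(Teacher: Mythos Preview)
Your argument is correct and is the standard verification. The paper itself supplies no proof here: it simply refers to Proposition~1 in Appendix~1 of \cite{FKMT}, whose proof is precisely the pushforward-plus-Fubini argument you have written out. One small cosmetic point: in Step~3 you appeal to Lemma~\ref{lawequiv}, which as stated gives the identity only for a.e.\ $(t,x)$; for bounded continuous $g$ (in particular $g\in C_0(\mathbb{R}^m)$) the change-of-variables formula $\langle\mu_{t,x},g\rangle=\int_{\mathfrak{X}} g(u(\omega;t,x))\dd P(\omega)$ holds for \emph{every} $(t,x)$ by the very definition of the pushforward measure, so you do not need the lemma and can avoid the spurious ``a.e.''\ qualifier.
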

\begin{proof}
The proof is the same as that in \cite{FKMT}.
\end{proof}

Finally we consider the following result adapted from \cite{FKMT}, found therein as Proposition 2 in Appendix 1.

\begin{proposition}\label{randvar}
For every Young measure $\{\nu_{t,x}\}_{(t,x)\in Q_T}\in \mathcal{P}^r$, $r>0$, on \R{m} there exists a probability space $(\mathfrak{X},F,P)$ with $P\in\mathcal{P}^r$ and a Borel measurable function $u:\mathfrak{X}\times\Omega\rightarrow\R{m}$ such that $u$ has law given by $\nu$. In particular we may choose the probability space to be the Lebesgue measure on the interval $[0,1)$ with the Borel $\sigma$-algebra.
\end{proposition}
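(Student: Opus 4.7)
My plan is as follows. Take $(\mathfrak{X}, \mathcal{F}, P) = ([0,1), \mathcal{B}([0,1)), \lambda)$, where $\lambda$ denotes Lebesgue measure. Since $\R{m}$ is an uncountable Polish (in particular, standard Borel) space, Kuratowski's isomorphism theorem furnishes a Borel isomorphism $\phi : \R{m} \rightarrow [0,1)$. It therefore suffices to construct a jointly Borel measurable map $\tilde{u} : [0,1) \times Q_T \rightarrow [0,1)$ such that, for each $(t,x) \in Q_T$, the pushforward of $\lambda$ under $\omega \mapsto \tilde{u}(\omega;t,x)$ coincides with $\tilde{\nu}_{t,x} := \phi_{\ast}\nu_{t,x}$ on $[0,1)$; then $u(\omega;t,x) := \phi^{-1}(\tilde{u}(\omega;t,x))$ is Borel measurable and, by the change-of-variables identity, has law $\nu_{t,x}$ in the sense of \eqref{law}.

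For $\tilde{u}$ I would use the classical quantile-function (Skorokhod) construction. Set
\[
F_{t,x}(s) := \tilde{\nu}_{t,x}([0,s]), \qquad s \in [0,1),
\]
which is non-decreasing and right-continuous in $s$, and define
\[
\tilde{u}(\omega;t,x) := \inf\{s \in [0,1) : F_{t,x}(s) \geq \omega\}, \qquad \omega \in [0,1).
\]
The standard identity $\{\omega \in [0,1) : \tilde{u}(\omega;t,x) \leq s\} = \{\omega \in [0,1) : \omega \leq F_{t,x}(s)\}$ gives, for each fixed $(t,x)$, that the law of $\omega \mapsto \tilde{u}(\omega;t,x)$ under $\lambda$ is precisely $\tilde{\nu}_{t,x}$.

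The main obstacle, and the only step requiring real care, is joint Borel measurability of $\tilde{u}$ on $[0,1) \times Q_T$. This reduces to showing that, for each $s \in [0,1)$, the map $(t,x) \mapsto F_{t,x}(s)$ is Borel measurable on $Q_T$. By the definition of a Young measure, $(t,x) \mapsto \langle \nu_{t,x}, g \rangle$ is measurable for every $g \in C_c(\R{m})$; a standard monotone-class / bounded-convergence argument extends measurability to all bounded Borel $g$, in particular to $g = \chi_{\phi^{-1}([0,s])}$, yielding the required measurability of $F_{t,x}(s)$ in $(t,x)$. Combining this with the rational-parameter identity
\[
\{(\omega,t,x) : \tilde{u}(\omega;t,x) > c\} = \bigcup_{s \in \mathbb{Q} \cap [0,c)} \{(\omega,t,x) : F_{t,x}(s) < \omega\}
\]
gives joint Borel measurability of $\tilde{u}$, and hence of $u = \phi^{-1} \circ \tilde{u}$.

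Finally, the moment condition is automatic: by Lemma \ref{lawequiv} applied with $g(\xi) = |\xi|^r$ one has
\[
\int_0^1 |u(\omega;t,x)|^r \dd\omega = \langle \nu_{t,x}, |\xi|^r \rangle,
\]
which is finite for almost every $(t,x) \in Q_T$ because $\nu \in \mathcal{P}^r$; this is the sense in which $P \in \mathcal{P}^r$ is to be read.
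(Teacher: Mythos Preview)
Your argument is correct in substance and takes a genuinely different, more self-contained route than the paper. The paper does not actually carry out the construction of $u$: it invokes Proposition~2 of Appendix~1 in \cite{FKMT} for the existence of a $u$ with law $\nu$ tested against $g \in C_0(\R{m})$, and then devotes the body of the proof to extending the identity $\langle \nu_{t,x}, g\rangle = \int_{\mathfrak{X}} g(u(\omega;t,x))\,{\rm d}P(\omega)$ from $g\in C_0$ to general continuous $g$ with $\int_{\mathfrak{X}} |g(u)|\,{\rm d}P < \infty$, via a smooth cutoff $g_k(\xi)=g(\xi)\varphi(\xi/k)$ and the Dominated Convergence Theorem. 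You instead supply the construction explicitly (Kuratowski isomorphism plus the quantile/Skorokhod representation on $[0,1)$), which makes the argument independent of the external reference; the extension to general $g$ that the paper labours over is, in your framework, already absorbed into Lemma~\ref{lawequiv}, which you simply cite at the end for the moment identity.

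One small correction: the displayed identity you give for $\{(\omega,t,x):\tilde{u}(\omega;t,x) > c\}$ as a union over rational $s \in [0,c)$ of the sets $\{F_{t,x}(s) < \omega\}$ is not correct as written --- the right-hand side can be strictly larger, since $F_{t,x}$ may jump on $[s,c]$. But you do not need this identity at all: the equivalence $\{\tilde{u}(\omega;t,x) \leq s\} = \{\omega \leq F_{t,x}(s)\}$ that you already recorded, combined with your measurability of $(t,x) \mapsto F_{t,x}(s)$ for each fixed $s$, already yields joint Borel measurability of $\tilde{u}$ directly, since the map $(\omega,t,x)\mapsto \omega - F_{t,x}(s)$ is jointly measurable.
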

\begin{proof}
The proof of this result relies on using the characterization given in Lemma \ref{lawequiv}. In \cite{FKMT} a proof is supplied with $g\in C_0(\R{m})$; we extend this result below to cover our particular class of Young measures. We give the proof in the case of scalar-valued functions $u$; the extension to vector-valued functions follows by a component-wise argument.

To extend \emph{Proposition 2} in Appendix 1 of \cite{FKMT} from $g\in C_0(\R{})$ to $g\in C(\R{})$ with $\int_\mathfrak{X} |g(u(\omega;t,x))|{\dd}P(\omega)<\infty$ for a.e. $(t,x) \in Q_T$, let $\varphi \in C^\infty_0(\R{})$ be such that $0 \leq \varphi \leq 1$, $\varphi(\xi) \equiv 1$ for $|\xi| \leq 1$ and $\varphi(\xi) \equiv 0$ for $|\xi|\geq 2$. Let $g_k(x):= g(x) \varphi(x/k)$. Clearly, $g_k$ is a sequence of functions in $C_0(\R{})$, $g_k(\xi)=g(\xi)$ if $|\xi|\leq k$, and $g_k$ converges pointwise to $g$ on $\R{}$ as $k \rightarrow \infty$. Furthermore, $|g_k(x)| \leq |g(x)|$ for all $x \in \R{}$. Thus, thanks to the Dominated Convergence Theorem, we have that
\begin{align*}
\int_\mathfrak{X} g(u(\omega;x)){\dd}P(\omega)&=\lim_{k\rightarrow\infty}\int_\mathfrak{X} g_k(u(\omega;x)){\dd}P(\omega)
=\lim_{k\rightarrow\infty}\int_{\R{}}g_k(\xi){\dd}\nu_x(\xi)
=\int_{\R{}}g(\xi){\dd}\nu_x(\xi).
\end{align*}
We remark that progressing from the left-hand side of the second equality above to the right-hand side of that equality is precisely the result of \emph{Proposition 2} in Appendix 1 of \cite{FKMT}.
\end{proof}

Finally, we state and prove a standard lemma regarding independent and identically distributed random variables, and their images under measurable functions.

\begin{lemma}\label{iid}
Let $(\mathfrak{X},F,P)$ be a probability space and let $f:\mathfrak{X}\rightarrow\R{m}$ be a measurable function. If two random variables, $Y_1$ and $Y_2$, defined on $\mathfrak{X}$ are independent and identically distributed, then the random variables $f(Y_1)$ and $f(Y_2)$ are independent and identically distributed on $\mathfrak{X}$.
\end{lemma}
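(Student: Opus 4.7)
The plan is to reduce both claims (identical distribution of $f(Y_1)$, $f(Y_2)$ and their independence) to the corresponding properties of $Y_1$ and $Y_2$ by pulling back Borel sets through $f$. I read the statement as saying that $Y_1, Y_2$ take values in some measurable space $(S,\mathcal{S})$ (so that $f$ is a measurable map from $(S,\mathcal{S})$ into $(\mathbb{R}^m,\mathcal{B}(\mathbb{R}^m))$), since otherwise the composition $f(Y_i)$ would not make sense; the proof is insensitive to what $S$ is, provided $f$ is $(\mathcal{S},\mathcal{B}(\mathbb{R}^m))$-measurable.

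For the identical-distribution part, I would observe that the law of $f(Y_i)$ on $\mathbb{R}^m$ is the pushforward $f_\ast(\mathrm{Law}(Y_i))$. Explicitly, for any Borel set $A\subset \mathbb{R}^m$,
\[
P(f(Y_i)\in A) = P(Y_i\in f^{-1}(A)),
\]
and since $f^{-1}(A)\in \mathcal{S}$ by measurability of $f$, the common law of $Y_1$ and $Y_2$ assigns the same value to $f^{-1}(A)$. Hence $P(f(Y_1)\in A)=P(f(Y_2)\in A)$ for every Borel $A$, which is precisely identity in distribution.

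For independence, I would similarly use the preimage trick: for Borel sets $A,B\subset \mathbb{R}^m$,
\[
\{f(Y_1)\in A\} = \{Y_1\in f^{-1}(A)\}\in \sigma(Y_1), \qquad \{f(Y_2)\in B\} = \{Y_2\in f^{-1}(B)\}\in \sigma(Y_2).
\]
By independence of $Y_1$ and $Y_2$,
\[
P(f(Y_1)\in A,\, f(Y_2)\in B) = P(Y_1\in f^{-1}(A))\,P(Y_2\in f^{-1}(B)) = P(f(Y_1)\in A)\,P(f(Y_2)\in B),
\]
establishing independence of $f(Y_1)$ and $f(Y_2)$. Equivalently, one may phrase this as the inclusion of $\sigma$-algebras $\sigma(f(Y_i))\subset \sigma(Y_i)$, after which independence of the larger algebras passes to the smaller ones.

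There is no substantive obstacle here; the argument is a routine application of the definitions together with measurability of $f$, and the only care needed is to note that the pullback sets $f^{-1}(A)$ lie in the appropriate $\sigma$-algebra so that the hypotheses on $Y_1,Y_2$ apply. No continuity or integrability assumption on $f$ is required.
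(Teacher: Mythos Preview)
Your proof is correct and follows essentially the same approach as the paper's own proof: both establish identical distribution and independence by pulling back Borel sets through $f^{-1}$ and invoking the corresponding hypotheses on $Y_1$ and $Y_2$. Your remark that $f$ must really be a measurable map on the state space of the $Y_i$ (rather than on $\mathfrak{X}$ itself) is a useful clarification of the statement that the paper leaves implicit.
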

\begin{proof}
We first prove that the random variables $f(Y_1)$ and $f(Y_2)$ are identically distributed. We let $\mathcal{A}\subset \R{m}$ be a measurable set. It then follows from the measurability of $f$ that
\begin{align*}
P(f(Y_1(\omega))\in\mathcal{A})&=P(Y_1(\omega)\in f^{-1}(\mathcal{A}))
=P(Y_2(\omega) \in f^{-1}(\mathcal{A}))
=P(f(Y_2(\omega))\in \mathcal{A})\qquad \forall\, \omega \in \mathfrak{X}.
\end{align*}
To show independence, we let $\mathcal{A}_1$ and $\mathcal{A}_2$ be measurable sets and calculate
\begin{align*}
P\left((f(Y_1(\omega))\in\mathcal{A}_1)\cap(f(Y_2(\omega))\in\mathcal{A}_2)\right)&=P\left((Y_1(\omega)\in f^{-1}(\mathcal{A}_1))\cap(Y_2(\omega)\in f^{-1}(\mathcal{A}_2))\right)\\
&=P(Y_1(\omega)\in f^{-1}(\mathcal{A}_1))\,P(Y_2(\omega)\in f^{-1}(\mathcal{A}_2))\\
&=P(f(Y_1(\omega))\in\mathcal{A}_1)\,P(f(Y_2(\omega))\in\mathcal{A}_2)\qquad \forall\, \omega \in \mathfrak{X}.
\end{align*}
That completes the proof of the lemma.
\end{proof}

\subsection{An overview of the algorithm}

The algorithms discussed below involve generating (at random) a large set of initial data, evolving each of these forward under the solution operator of a semi- or fully-discrete numerical method, and considering the arithmetic average of the resulting functions at a fixed time, before completing various limit passages. The algorithms considered here have been motivated by similar algorithms used in \cite{FKMT} and \cite{FMT} for the approximation of measure-valued solutions to hyperbolic problems. To this end, let $Y(\Omega,\R{m})$ denote the set of all Young measures from $\Omega$ to $\R{m}$.

\bigskip

\noindent
\textbf{Algorithm A:} Let the initial data for an underlying time-dependent PDE be given as a Young measure $\sigma \in Y(\Omega;\R{m})$ and let $\Delta$ denote a PDE discretization parameter associated with a certain numerical scheme (in \cite{FKMT} and \cite{FMT} a finite difference scheme is utilized, and so $\Delta$ would correspond to a vector of grid sizes in the various co-ordinate directions).

\medskip

\noindent \textbf{Step 1:} Let $u_0: \omega \in \mathfrak{X} \mapsto u_0(\omega;\cdot) \in L^p(\Omega;\R{m})$ be a random field on a probability space $(\mathfrak{X},F,P)$ with law $\sigma_x$, meaning that $\sigma_x(E)=P(u_0(\omega;x)\in E)$ for all Borel sets $E \subset \R{m}$ and $x \in \Omega$.

\smallskip

\noindent \textbf{Step 2:} Evolve the initial random field by applying a suitable numerical scheme, with solution map $\textbf{S}_t^{\Delta}$, to the initial data $u_0(\omega;\cdot)$ for every $\omega\in \mathfrak{X}$, obtaining an approximate random field $u^\Delta(\omega;\cdot,t):=\textbf{S}_t^{\Delta}u_0(\omega;\cdot)$, $t \in (0,T]$.

\smallskip

\noindent \textbf{Step 3:} Define the approximate measure-valued solution $\mu^{\Delta}$ as the law of $u^\Delta$ with respect to $P$, that is, for all Borel sets $E\subset\R{m}$ and $(t,x) \in Q_T$,
\[\mu_{t,x}^\Delta(E)=P(u^\Delta (\omega;t,x)\in E).\]

\smallskip

Note that we should expect a slight difference compared with \cite{FKMT} and \cite{FMT} in what we are trying to do, as the Young measures appearing in our context are generated by sequences of gradients. Next, we need a method to approximate the random field $(\omega;\cdot,\cdot) \mapsto u^\Delta(\omega;\cdot,\cdot)$, as performing these computations for every $\omega\in \mathfrak{X}$ would be infeasible. This is \emph{Algorithm 4.3} in \cite{FMT}.

\medskip

\noindent \textbf{Algorithm B:} Let, as above, $\Delta$ denote a discretization parameter, and let $M\in\mathbb{N}$. Let $\sigma^{\Delta}$ be the initial Young measure.

\medskip

\noindent \textbf{Step 1:} From some probability space $(\mathfrak{X},F,P)$ draw $M$ independent and identically distributed random fields $u_0^{\Delta,1},\ldots,u_0^{\Delta,M}$ all with the same law $\sigma^\Delta$.

\smallskip

\noindent \textbf{Step 2:} For each $k \in \{1,\dots,M\}$ and for a fixed $\omega\in \mathfrak{X}$ approximate the solution to the PDE using the solution operator with initial data $u_0^{\Delta,k}(\omega)$; denote $u^{\Delta,k}(\omega;\cdot,t):=\textbf{S}_t^{\Delta}u_0^{\Delta,k}(\omega;\cdot)$.

\smallskip

\noindent \textbf{Step 3:} Define the approximate measure-valued solution by
\[\mu_{t,x}^{\Delta,M}:=\frac{1}{M}\sum_{k=1}^M\delta_{u^{\Delta,k}(\cdot;t,x)}.\]
\medskip

Results from \cite{FMT} (see \emph{Theorem 4.5}, \emph{Theorem 5.1} and \emph{Corollary 5.4}) then guarantee convergence of the sequence of approximate measure-valued solutions to a measure-valued solution as one passes to the limit (diagonally). In this section we discuss in more detail these algorithms and their adaptation to the systems of parabolic PDEs which we are interested in.

\subsection{Modifications for systems of forward-backward parabolic PDEs}

We now describe the necessary changes to Algorithms A and B described above in order to be able to apply them to the problem \eqref{modelprob1}--\eqref{modelconstants}.

Below we formulate the analogues of Algorithm A and Algorithm B, which are needed for our parabolic problem. Since in our case the solution is a Sobolev function, we make here the additional restriction that we only consider atomic initial data: that is, our initial datum is assumed to be given by a function $u_0 \in L^2(\Omega;\R{m})$, which we view as the atomic Young measure
$\delta_{u_0}=\delta_{u_0(x)}$, $x \in \Omega$.

\bigskip

\noindent \textbf{Algorithm C:} Let the initial datum for the problem \eqref{modelprob1}--\eqref{modelconstants} be given as a function $u_0\in L^2(\Omega;\R{m})$ and let $h \in (0,h_0]$ be the spatial grid size parameter.

\medskip

\noindent \textbf{Step 1:} Let $\upsilon:\mathfrak{X}\rightarrow L^2(\Omega;\R{m})$ be a random field on a probability space $(\mathfrak{X},F,P)$, and discretize $\upsilon$ by a finite element approximation of random fields $\upsilon^h \in V^h_m$, so that $\|\upsilon^h(\omega;\cdot)\|_{L^2(\Omega;\R{m})}\leq1$ for $P$-a.e. $\omega \in \mathfrak{X}$. Discretize $u_0$ by a finite element approximation $u_0^h \in V^h_m$, and then perturb this discretization by defining
\[u_0^{h,\epsilon}(\omega;x):=u_0^h(x)+\epsilon \upsilon^h(\omega;x),\qquad \omega \in \mathfrak{X},\quad x \in \Omega,\]
where $\epsilon \in (0,1]$, and let $\sigma^{h,\epsilon}$ be the law of $u_0^{h,\epsilon}$, meaning that $\sigma^{h,\epsilon}_x(E)=P(u_0^{h,\epsilon}(\omega;x)\in E)$ for $\omega \in \mathfrak{X}$ and $x \in \Omega$.

\smallskip

\noindent \textbf{Step 2:} Evolve the initial random field by applying a suitable numerical scheme, with solution map $\textbf{S}_t$, to the initial datum $u_0^{h,\epsilon}(\omega)$ for every $\omega\in \mathfrak{X}$, obtaining an approximate random field $u^{h,\epsilon}(\omega,t,\cdot):=\textbf{S}_tu_0^{h,\epsilon}(\omega;\cdot)$, $t \in (0,T]$.

\smallskip

\noindent \textbf{Step 3:} Define the approximate Young measure solution $\mu^{h,\epsilon}$ as the law of $u^{h,\epsilon}$ with respect to $P$, that is, for all Borel sets $E\subset\R{m}$, %
\[\mu_{t,x}^{h,\epsilon}(E)=P(u^{h,\epsilon}(\omega;t,x)\in E),\qquad \omega \in \mathfrak{X},\quad (t,x) \in Q_T, \]
and define, analogously, $\nu^{h,\epsilon}$ to be the law of $Du^{h,\epsilon}$ with respect to $P$.

\medskip

As before we need a method to approximate the random field $u_0^\epsilon(\omega;x)$, as well as the measures $\mu^{h,\epsilon}$ and $\nu^{h,\epsilon}$.

\medskip

\noindent \textbf{Algorithm D:} Let the initial datum for the problem \eqref{modelprob1}--\eqref{modelconstants} be given as a function $u_0\in L^2(\Omega;\R{m})$, let $h \in (0,h_0]$ be the spatial grid size parameter, and let $M\in\mathbb{N}$.

\medskip

\noindent \textbf{Step 1:} From some probability space $(\mathfrak{X}, F, P)$ draw $M$ independent and identically distributed random fields $\upsilon^{h,1},\ldots,\upsilon^{h,M}$ such that $\|\upsilon^{h,i}(\omega;\cdot)\|_{L^2(\Omega;\R{m})}\leq1$, for $P$-a.e. $\omega\in \mathfrak{X}$ and for all $i=1,\ldots,M$, and such that $u_0^{h,1,\epsilon},\ldots,u_0^{h,M,\epsilon}$ all have the same law $\sigma^{h,\epsilon}$, where
\[u_0^{h,k,\epsilon}(\omega;x)=u_0^h(x)+\epsilon \upsilon^{h,k}(\omega;x),\qquad \omega \in \mathfrak{X}, \quad x \in \Omega,\]
with $\epsilon \in (0,1]$. We make explicit here the fact that $\|u_0^{h,k,\epsilon}(\omega;\cdot)\|_{L^2(\Omega;\R{m})}\leq \|u_0^h\|_{L^2(\Omega;\R{m})}+\epsilon$ for all $\omega \in \mathfrak{X}$, so that $\|u_0^{h,k,\epsilon}(\omega;\cdot)\|_{L^2(\Omega;\R{m})}$ can be assumed to be bounded independent of $k$, $\epsilon$, and $h$ (thanks to the assumed strong convergence of $u_0^h$ to $u_0$ in $L^2(\Omega;\R{m})$ and because $\epsilon \in (0,1]$).

\smallskip

\noindent \textbf{Step 2:} For each $k$ and for a fixed $\omega\in \mathfrak{X}$ approximate the solution to the initial boundary-value problem \eqref{modelprob1}--\eqref{modelconstants} under consideration using the solution operator with initial data $u_0^{h,k,\epsilon}(\omega)$; denote $u^{h,k,\epsilon}(\omega;t,\cdot)=\textbf{S}_tu_0^{h,k,\epsilon}(\omega;\cdot,\cdot)$.

\smallskip

\noindent \textbf{Step 3:} Define the approximate Young measure solution $(\mu_{t,x}^{h,M,\epsilon},\nu_{t,x}^{h,M,\epsilon})$
by
\[\mu_{t,x}^{h,M,\epsilon}:=\frac{1}{M}\sum_{k=1}^M\delta_{u^{h,k,\epsilon}(\cdot;t,x)} \qquad \mbox{and}\qquad \nu_{t,x}^{h,M,\epsilon}:=\frac{1}{M}\sum_{k=1}^M\delta_{Du^{h,k,\epsilon}(\cdot;t,x)}.\]

\subsection{Applying the algorithm to construct Young measure solutions}

In what follows we will show that Algorithm D described above converges to a Young measure solution of the system of PDEs \eqref{modelprob1}--\eqref{modelconstants} under consideration as we pass $\Delta t \rightarrow 0_+$, $M\rightarrow\infty$, $h,\epsilon\rightarrow0_+$ (in that order).

\begin{rem}
The parameter $\omega$ taken from the probability space $\mathfrak{X}$ is intended to represent the ``seed" in the numerical algorithm. Different seeds $\omega \in \mathfrak{X}$ will give rise to different approximate solutions. What will be shown in our analysis of the limit process $\Delta t \rightarrow 0_+$,
$M \rightarrow \infty$, $h, \epsilon \rightarrow 0_+$ is that one can pass to the limit in $M$ (along a subsequence) to obtain convergence to a quantity which is independent of $\omega\in \mathfrak{X}$, for $P$-a.e. $\omega\in \mathfrak{X}$.
\end{rem}

\smallskip

\noindent \textbf{{Step 1: Passage to the limits $\Delta t \rightarrow 0_+$, $M\rightarrow\infty$}}. We shall use the semidiscrete numerical scheme discussed in Section \ref{sec:3}, augmented with the time-stepping procedure discussed in Section \ref{sec:4}. By Lemma \ref{contmapping} we have that, for any $\omega\in \mathfrak{X}$, any $\epsilon>0$, any positive integer $k$ and any $h \in (0,h_0]$ and $\Delta t = T/N$, $N \in \mathbb{N}$, we can construct, via the time-stepping procedure from Section \ref{sec:4}, a function $u^{h,k,\epsilon}_{\Delta t}(\omega;\cdot,\cdot)$, which for any $h \in (0,h_0]$ and any $\Delta t>0$ (sufficiently small, for a given fixed $h \in (0,h_0]$) uniquely solves the fully discrete scheme and depends continuously on the discretized initial condition $u_0^{h,k,\epsilon}$. For the rest of the analysis we shall assume for the sake of brevity that the passage to the limit $\Delta t\rightarrow0_+$ has already been made for $\omega \in \mathfrak{X}$ and $h \in (0,h_0]$ fixed,
by repeating the analysis performed in Section \ref{sec:4}, and we shall therefore consider the limiting function $u^{h,k,\epsilon}(\omega;\cdot,\cdot)$ resulting from the passage to the limit $\Delta t \rightarrow 0_+$ instead of $u^{h,k,\epsilon}_{\Delta t}(\omega;\cdot,\cdot)$. Passage to the limit $\Delta t \rightarrow 0_+$, for reasons that will be explained in due course, must happen before the limit passage $M\rightarrow \infty$. In particular we remark that as we are at this stage considering finitely many perturbations of the discretized initial condition, the existence of a suitable subsequence $\Delta t_j\rightarrow0_+$, which works for each $k\in\{1,\ldots,M\}$, is straightforward.

For each initial datum $u_0^{h,k,\epsilon}(\omega;\cdot)$ (for fixed $\omega\in \mathfrak{X}$ and $k\in\{1,\ldots,M\}$), we have from
\eqref{modelenergy-b} the following energy estimate:
\begin{align}\label{e:kbound}
\begin{aligned}
\|u^{h,k,\epsilon}(\omega)\|_{L^\infty(0,T;L^2(\Omega;\R{m}))}^2&+\sum_{i=1}^m\
\|Du_i^{h,k,\epsilon}(\omega)\|_{L^{p_i}(0,T;W_0^{1,p_i}(\Omega;\R{m\times n}))}^{p_i}
\\
& \leq c(1+ \|u_0^{h,k,\epsilon}(\omega)\|_{L^2(\Omega;\R{m})}^2+\|F\|_{L^{p'}\!(Q_T;\R{m})}^{p'}).
\end{aligned}
\end{align}
Furthermore, \eqref{e:au-bound} implies, with $\hat{q}:=\max\{q,2\}$  and $\hat{q}'=\frac{\hat{q}}{\hat{q}-1}$, that
\begin{align}\label{e:kbound-a}
\|a(Du^{h,k,\epsilon})(\omega)\|_{L^{\hat{q}'}\!(Q_T;\R{m\times n})}\leq C
\end{align}
and \eqref{modeltimederivative} yields
\begin{align}\label{e:kbound-t}
\|\partial_tu^{h,k,\epsilon}(\omega)\|_{L^{\hat{q}'}\!(0,T;W^{-1,\hat{q}'}\!(\Omega;\R{m}))}\leq C,
\end{align}
where $C$ is a positive constant that is independent of $h,k$ and $\epsilon$. This follows from the form of the perturbed initial condition in Algorithm D, which guarantees that the perturbed initial data are bounded uniformly in $h,k$ and $\epsilon$. Summing the equations satisfied by $u^{h,k,\epsilon}(\omega;\cdot,\cdot)$ over $k=1,\ldots,M$ and dividing by $M$ we see that the functions $u^{h,k,\epsilon}$ satisfy, for all $\psi^h\in L^q(0,T;V^h_m)$, the equality:
\begin{equation}
\begin{split}\label{newweak}
&\int_0^T\int_\Omega\frac{1}{M}\sum_{k=1}^M \partial_tu^{h,k,\epsilon}(\omega;t,x)\cdot\psi^h(t,x)+
\frac{1}{M}\sum_{k=1}^M a(Du^{h,k,\epsilon}(\omega;t,x)): D\psi^h(t,x){\dd}x{\dd}t\\
&\qquad +\int_0^T\int_\Omega\frac{1}{M}\sum_{k=1}^MBu^{h,k,\epsilon}(\omega;t,x)\cdot\psi^h(t,x){\dd}x{\dd}t =\int_0^T\int_\Omega F\cdot\psi^h{\dd}x{\dd}t.
\end{split}
\end{equation}

Integration by parts in time then yields, for all $\psi^h\in W^{1,1}_0(0,T;V^h_m)$,
\begin{equation}
\label{newweak2}
\begin{split}
&\int_0^T\int_\Omega\frac{1}{M}\sum_{k=1}^M u^{h,k,\epsilon}(\omega;t,x)\cdot (\partial_t\psi^h(t,x)-B^{\rm T}\psi^h(t,x)){\dd}x{\dd}t\\
&\qquad -\int_0^T\int_\Omega \frac{1}{M}\sum_{k=1}^M a(Du^{h,k,\epsilon}(\omega;t,x)): D\psi^h(t,x)-F(t,x)\cdot\psi^h(t,x){\dd}x{\dd}t=0.
\end{split}
\end{equation}

We now wish to let $M\rightarrow\infty$ in \eqref{newweak2}. To this end, we need to prove certain convergence results that will enable passage to this limit.  We begin by considering the sequence of functions $\{\frac{1}{M}\sum_{k=1}^M a(Du^{h,k,\epsilon}(\omega;\cdot,\cdot))\}_{M \geq 1}$, for a fixed $\omega\in \mathfrak{X}$, which is,  thanks to \eqref{e:au-bound}, bounded in the $L^{\hat{q}'}\!(Q_T;\R{m \times n})$ norm, uniformly in $h \in (0,h_0]$ and $\epsilon \in (0,1]$, and as such there is a subsequence $\{M_n\}_{n \geq 1}$ and a function $\chi^{h,\epsilon}(\omega;\cdot,\cdot)\in L^{\hat{q}'}\!(Q_T;\R{m \times n})$ such that $\frac{1}{M_n}\sum_{k=1}^{M_n} a(Du^{h,k,\epsilon}(\omega;\cdot,\cdot)) \rightharpoonup\chi^{h,\epsilon}(\omega;\cdot,\cdot)$ in $L^{\hat{q}'}\!(Q_T;\R{m \times n})$, as $n \rightarrow \infty$. We aim to identify this function $\chi^{h,\epsilon}(\omega;\cdot,\cdot)$ for $\omega \in \mathfrak{X}$, by identifying the limit in some weaker space and appealing to the uniqueness of weak limits.

Let us consider the initial datum $u^{h,\epsilon}_0(\omega;x)=u^h_0(x) + \epsilon \upsilon^h(\omega;x)$ that is independent
of the initial data $u_0^{h,1,\epsilon},\ldots,u_0^{h,M,\epsilon}$, and has the same distribution as these initial data (meaning in particular that the law of $u_0^{h,\epsilon}$ is given by $\sigma^{h,\epsilon}$), and satisfies $\|\upsilon^h(\omega;\cdot)\|_{L^2(\Omega;\R{m})}\leq 1$ for $P$-almost every $\omega\in \mathfrak{X}$. We evolve this according to the algorithm for each $\omega\in \mathfrak{X}$ to obtain a function $u^{h,\epsilon}(\omega;t,x)$. We denote the law of $u^{h,\epsilon}$ by $\mu^{h,\epsilon}$, and similarly denote the law of $Du^{h,\epsilon}$ by $\nu^{h,\epsilon}$. From Theorem \ref{YMS} we see that the following regularity results hold:
\begin{align}\label{approxreg1}
u^{h,\epsilon}&\in L^\infty\left(\mathfrak{X};\bigtimes_{i=1}^mL^{p_i}(0,T;W_0^{1,p_i}(\Omega;\R{m}))\right)\cap L^\infty(\mathfrak{X}\times(0,T);L^2(\Omega;\R{m})),\\
\label{approxreg2}\partial_t u^{h,\epsilon}&\in L^\infty(\mathfrak{X};L^{\hat{q}'}\!(0,T;W^{-1,\hat{q}'}\!(\Omega;\R{m}))),\\
\label{approxreg3}a(Du^{h,\epsilon})&\in L^\infty(\mathfrak{X};L^{\hat{q}'}\!(Q_T;\R{m\times n})).
\end{align}
Analogously to \eqref{e:kbound}--\eqref{e:kbound-t}, the norms of these functions in the respective spaces can be bounded by a positive constant $C$, independent of $h$ and $\epsilon$. Next, we formulate an analogue of \emph{Theorem 11} from \cite{FKMT}.

\begin{lemma}\label{subsequenceconvergence}
Let $\alpha\in(1,\frac{p}{q-1})$ and suppose that $\gamma\in L^{\alpha'}\!(Q_T;E_{\frac{p}{\alpha}})$ (understood as an $m\times n$ matrix-valued function). Along the subsequence $\{M_n\}_{n \geq 1}$ we have
\[\int_0^T\int_\Omega\langle\nu_{\omega;t,x}^{h,M_n,\epsilon},\gamma(t,x,\cdot)\rangle{\dd}x{\dd}t
\rightarrow\int_0^T\int_\Omega\langle\nu_{t,x}^{h,\epsilon},\gamma(t,x,\cdot)\rangle {\rm d}x{\dd}t,\]
in $L^2(\mathfrak{X})$ as $n\rightarrow\infty$. In particular, for each $\gamma \in L^{\alpha'}\!(Q_T;E_{\frac{p}{\alpha}})$ there is a subsequence of $M_n$ such that for $P$-a.e. $\omega\in \mathfrak{X}$ the convergence is pointwise.
\end{lemma}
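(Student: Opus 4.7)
The plan is a law-of-large-numbers / Monte Carlo argument in $L^2(\mathfrak{X})$. By the definition of $\nu^{h,M_n,\epsilon}$ in Algorithm D,
\[
\langle\nu^{h,M_n,\epsilon}_{\omega;t,x},\gamma(t,x,\cdot)\rangle = \frac{1}{M_n}\sum_{k=1}^{M_n}\gamma(t,x,Du^{h,k,\epsilon}(\omega;t,x)),
\]
so the quantity inside the left-hand integral is an empirical mean of random variables whose common expectation should be $\langle\nu^{h,\epsilon}_{t,x},\gamma(t,x,\cdot)\rangle$, and the assertion boils down to a variance estimate scaling like $1/M_n$. To set this up, I would first establish the i.i.d. structure: since the initial fields $u_0^{h,k,\epsilon}$ are i.i.d. by Step~1 of Algorithm D and the (semi- or fully-discrete) solution operator $\textbf{S}$ is deterministic and continuous (hence measurable) in its initial datum by Proposition \ref{contmapping}, Lemma \ref{iid} yields that $u^{h,k,\epsilon}(\omega;\cdot,\cdot)$ are i.i.d. random fields whose common law equals that of $u^{h,\epsilon}$; a second application of Lemma \ref{iid}, via the measurable pointwise gradient-evaluation map, shows that $Du^{h,k,\epsilon}(\omega;t,x)$ are i.i.d. in $\R{m\times n}$ for each $(t,x)\in Q_T$, with common law $\nu^{h,\epsilon}_{t,x}$.

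Next, I would set
\[
Y_k(\omega) := \int_0^T\!\!\int_\Omega\bigl[\gamma(t,x,Du^{h,k,\epsilon}(\omega;t,x)) - \langle\nu^{h,\epsilon}_{t,x},\gamma(t,x,\cdot)\rangle\bigr]\dd x\dd t.
\]
By Remark \ref{rem:extend} applied to the Carath\'{e}odory function $\gamma$, $\mathbb{E}[Y_k]=0$; the $\{Y_k\}$ are themselves i.i.d. by another appeal to Lemma \ref{iid}, so pairwise orthogonality in $L^2(\mathfrak{X})$ collapses the cross terms and yields the identity
\[
\biggl\|\int_0^T\!\!\int_\Omega \langle\nu^{h,M_n,\epsilon}_{\omega;t,x},\gamma\rangle - \langle\nu^{h,\epsilon}_{t,x},\gamma\rangle\dd x\dd t\biggr\|_{L^2(\mathfrak{X})}^2 = \frac{\mathbb{E}[Y_1^2]}{M_n},
\]
thereby reducing the lemma to showing $\mathbb{E}[Y_1^2]<\infty$, ideally with a bound independent of $h$, $\epsilon$.

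The main analytic step is then a uniform (in $\omega$, $k$, $h$, $\epsilon$, $M_n$) bound on $|Y_1(\omega)|$. From the definition of the $E_{p/\alpha}$-norm, $|\gamma(t,x,\xi)|\leq \|\gamma(t,x,\cdot)\|_{E_{p/\alpha}}(1+|\xi|^{p/\alpha})$; combining this with H\"older's inequality on the $(t,x)$-integral with exponents $\alpha'$ and $\alpha$, and with $(1+s)^\alpha \leq C_\alpha(1+s^\alpha)$, reduces the bound to controlling $\|Du^{h,1,\epsilon}(\omega)\|_{L^p(Q_T)}^p$ and $\|\langle\nu^{h,\epsilon},|\cdot|^{p/\alpha}\rangle\|_{L^\alpha(Q_T)}^\alpha$. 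The former is uniformly bounded via the energy estimate \eqref{e:kbound}. For the latter, Jensen's inequality (valid since $\alpha \geq 1$) applied pointwise to the probability measure $\nu^{h,\epsilon}_{t,x}$, followed by Fubini, gives
\[
\|\langle\nu^{h,\epsilon}_{t,x},|\cdot|^{p/\alpha}\rangle\|_{L^\alpha(Q_T)}^\alpha \leq \int_{Q_T}\langle\nu^{h,\epsilon}_{t,x},|\cdot|^p\rangle\dd x\dd t = \mathbb{E}\bigl[\|Du^{h,\epsilon}(\omega)\|_{L^p(Q_T)}^p\bigr],
\]
which is again bounded by \eqref{e:kbound}. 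Hence $|Y_1(\omega)|\leq C$ uniformly in $\omega$, so $\mathbb{E}[Y_1^2]\leq C^2$, yielding the stated $L^2(\mathfrak{X})$ convergence at rate $O(M_n^{-1/2})$; pointwise a.e. convergence along a further subsequence is the standard consequence of $L^2$-convergence.

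I expect the main obstacle to be the variance bound, specifically the Jensen-plus-Fubini step that converts a moment of the Young measure $\nu^{h,\epsilon}$ back into a moment of the single solution $u^{h,\epsilon}$: this is what allows the resulting constant to be independent of $h$ and $\epsilon$, which will be crucial when subsequent passages to the limit in $\epsilon$ and $h$ are performed. The i.i.d. propagation through the nonlinear solution map and the pointwise gradient evaluation, by contrast, is a clean but essential double application of Lemma \ref{iid}, underpinned by the continuity (Proposition \ref{contmapping}) of the discrete solution operator.
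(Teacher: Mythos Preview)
Your proposal is correct and follows essentially the same Monte Carlo / law-of-large-numbers route as the paper: propagate the i.i.d.\ structure through the continuous discrete solution operator via Lemma~\ref{iid} and Proposition~\ref{contmapping}, identify the expectation with $\langle\nu^{h,\epsilon},\gamma\rangle$ via Lemma~\ref{lawequiv} (Remark~\ref{rem:extend}), and conclude by a variance estimate of order $1/M_n$ using H\"older and the energy bound~\eqref{e:kbound}. The only cosmetic difference is that the paper works component-wise with $G_{i,j}$ and bounds $\mathbb{E}[G_{i,j}^2]$ directly (so the Jensen--Fubini control of $\|\langle\nu^{h,\epsilon},|\cdot|^{p/\alpha}\rangle\|_{L^\alpha}$ you introduce is not needed there), whereas you center first and bound $|Y_1(\omega)|$ uniformly; both organizations yield the same $O(M_n^{-1/2})$ rate with constants independent of $h$ and $\epsilon$.
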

\begin{proof}
Given a random field $\eta:\mathfrak{X}\rightarrow L^1(Q_T)$ we define its expectation with respect to the probability measure $P$ as %
\[\mathbb{E}(\eta):=\int_\mathfrak{X}\eta(\omega){\dd}P(\omega).\]
Similarly to \cite{FKMT} we define, for
$k=1,\ldots,M$, $i=1,\dots, m$ and $j=1,\dots,n$ the following quantities:
\[G_{i,j}(\omega)=\int_0^T\int_\Omega \gamma_{i,j}(t,x,Du^{h,\epsilon}(\omega;t,x)){\dd}x{\dd}t,\]
and
\[G^k_{i,j}(\omega)=\int_0^T\int_\Omega \gamma_{i,j}(t,x,Du^{h,k,\epsilon}(\omega;t,x)){\dd}x{\dd}t.\]
We need to show that $G^1_{i,j},\ldots,G^{M_n}_{i,j}$ are independent and identically distributed.

We begin by remarking that as the initial data $u_0^{h,1,\epsilon},\ldots,u_0^{h,M_n,\epsilon}$ are independent and identically distributed (in the variable $\omega\in \mathfrak{X}$), and as the solution operator $\textbf{S}_t$ mapping an initial condition $\tilde{u}_0$ to $\tilde{u}(t,\cdot)$ is a continuous map (see Lemma \ref{contmapping}), the functions $u^{h,k,\epsilon}$ are all independent and identically distributed as well (see Lemma \ref{iid}). Furthermore, by the remarks following the statement of Theorem \ref{contmapping}, the functions $Du^{h,k,\epsilon}$ are also independent and identically distributed.

Recall that the functions $\gamma_{i,j}$ are measurable, they are continuous in their third argument, and satisfy the growth rate coming from the space $E_{\frac{p}{\alpha}}$. This implies that the functions $\gamma_{i,j}(t,x,Du^{h,k,\epsilon}(\omega;t,x))$ are independent and identically distributed with respect to $\omega\in \mathfrak{X}$. From Tonelli's Theorem it then follows that the functions $G^k_{i,j}$ are measurable in $\omega$, and thus by Lemma \ref{iid} are also independent and identically distributed in $\omega\in \mathfrak{X}$.

We now compute the $L^2$ error with respect to the probability measure $P$ of the following quantity: %
\[\left(\mathbb{E}(G_{i,j}(\omega))-\frac{1}{M_n}\sum_{k=1}^{M_n} G^k_{i,j}(\omega)\right)^2,\]
and as in \cite{FKMT}, using the fact that the random variables $G_{i,j}^k$ are independent and identically distributed, we can reduce this to
\begin{equation}\label{expectation}
\mathbb{E}\left(\left(\mathbb{E}(G_{i,j}(\omega))-\frac{1}{M_n}\sum_{k=1}^{M_n} G^k_{i,j}(\omega)\right)^2\right)=\frac{1}{M_n}(\mathbb{E}((G_{i,j})^2)-(\mathbb{E}(G_{i,j}))^2) \leq \frac{1}{M_n}\mathbb{E}((G_{i,j})^2).
\end{equation}
Given $\alpha\in(1,\frac{p}{q-1})$ we have that
\begin{align*}
\mathbb{E}((G_{i,j})^2)&=\int_\mathfrak{X}\left(\int_0^T\int_\Omega \gamma_{i,j}(t,x,Du^{h,\epsilon}(\omega;t,x)){\dd}x{\dd}t\right)^2{\dd}P(\omega)\\
&\leq\int_\mathfrak{X}\|\gamma_{i,j}\|_{L^{\alpha'}\!(Q_T;E_{\frac{p}{\alpha}})}^{2}
\|1+|Du^{h,\epsilon}(\omega)|^{\frac{p}{\alpha}}\|_{L^\alpha(Q_T)}^2{\dd}P(\omega).
\end{align*}

Next, we can bound the norm $\|1+|Du^{h,\epsilon}(\omega)|^{\frac{p}{\alpha}}\|_{L^\alpha(Q_T)}$ independently of $h$, $\epsilon$ and $\omega$ by using the energy estimate \eqref{modelenergy-b} (analogously to \eqref{e:kbound-a}). Thus, since $P$ is a probability measure, we have that $\mathbb{E}((G_{i,j})^2)$ is bounded, and so the right-hand side of \eqref{expectation} converges to zero as $M_n\rightarrow\infty$.

We wish to apply Lemma \ref{lawequiv} (note also Remark \ref{rem:extend}). In order to do this we must show that
\[\int_\mathfrak{X} |\gamma_{i,j}(t,x,Du^{h,\epsilon}(\omega;t,x))|{\dd}P(\omega)<\infty\qquad \mbox{for a.e. $(t,x) \in Q_T$}.\]
Note that if we show that
\[\int_\mathfrak{X}\int_0^T\int_\Omega |\gamma_{i,j}(t,x,Du^{h,\epsilon}(\omega;t,x))|{\dd}x{\dd}t{\dd}P(\omega)<\infty,\]
then the desired bound is true for almost every $(t,x)\in Q_T$, which is sufficient for our purposes. We see, using that $\gamma\in L^{\alpha'}\!(Q_T;E_{\frac{p}{\alpha}})$ with $\alpha\in(1,\frac{p}{q-1})$, that

\begin{align*}
\int_\mathfrak{X}\int_0^T\int_\Omega& |\gamma_{i,j}(t,x,Du^{h,\epsilon}(\omega;t,x))|{\dd}x{\dd}t{\dd}P(\omega)\\
&= \int_\mathfrak{X}\int_0^T\int_\Omega |\gamma_{i,j}(t,x,Du^{h,\epsilon}(\omega;t,x))|\frac{1+|Du^{h,\epsilon}(\omega;t,x)|^{\frac{p}{\alpha}}}
{1+|Du^{h,\epsilon}(\omega;t,x)|^{\frac{p}{\alpha}}}{\dd}x{\dd}t{\dd}P(\omega)\\
&\leq \int_\mathfrak{X}\int_0^T\int_\Omega \|\gamma_{i,j}(t,x,\cdot)\|_{E_{\frac{p}{\alpha}}}(1+|Du^{h,\epsilon}(\omega;t,x)|^{\frac{p}{\alpha}}){\dd}x{\dd}t{\dd}P(\omega)\\
&\leq \int_\mathfrak{X} \|\gamma_{i,j}\|_{L^{\alpha'}\!(Q_T;E_{\frac{p}{\alpha}})}\|1+|Du^{h,\epsilon}(\omega;\cdot,\cdot)|^{\frac{p}{\alpha}}
\|_{L^\alpha(Q_T)} {\dd}P(\omega)\\
&\leq\|\gamma_{i,j}\|_{L^{\alpha'}\!(Q_T;E_{\frac{p}{\alpha}})}\int_\mathfrak{X} |Q_T|^\frac{1}{\alpha}+\|Du^{h,\epsilon}(\omega;\cdot,\cdot)\|_{L^p(Q_T;\R{m \times n})}^{\frac{p}{\alpha}}{\dd}P(\omega)\\
&\leq\|\gamma_{i,j}\|_{L^{\alpha'}\!(Q_T;E_{\frac{p}{\alpha}})}\left(|Q_T|^\frac{1}{\alpha}+\|Du^{h,\epsilon}\|_{L^\infty(\mathfrak{X};L^p(Q_T;\R{m \times n}))}^{\frac{p}{\alpha}}\right),
\end{align*}
which is finite by the assumptions on $\gamma$ and by \eqref{approxreg1}. Then, by applying Lemma \ref{lawequiv}, we see that: %
\begin{align*}
\mathbb{E}(G_{i,j}(\omega))&=\int_\mathfrak{X}\int_0^T\int_\Omega \gamma_{i,j}(t,x,Du^{h,\epsilon}(\omega;t,x)){\dd}x{\dd}t{\dd}P(\omega)\\
&=\int_0^T\int_\Omega\left(\int_\mathfrak{X}\gamma_{i,j}(t,x,Du^{h,\epsilon}(\omega;t,x)){\dd}P(\omega)\right){\dd}x{\dd}t\\
&=\int_0^T\int_\Omega \langle \nu^{h,\epsilon}_{i,j;t,x},\gamma_{i,j}(t,x,\cdot)\rangle{\dd}x{\dd}t.
\end{align*}

Repeating these calculations for all components $\gamma_{i,j}$ gives us the convergence of %
\[\int_0^T\int_\Omega\langle\nu_{\omega;t,x}^{h,M_n,\epsilon},\gamma(t,x,\cdot)\rangle {\rm d}x{\dd}t\rightarrow\int_0^T\int_\Omega\langle\nu_{t,x}^{h,\epsilon},\gamma(t,x,\cdot)\rangle{\dd}x{\dd}t,\]
in $L^2(\mathfrak{X})$ as $M_n\rightarrow\infty$, from which we deduce the existence of a subsequence (not relabelled) converging for $P-$ almost every $\omega\in \mathfrak{X}$ using standard results in measure theory.
\end{proof}

What we would like to conclude is that by the above theorem we have, for $P-$almost every $\omega\in \mathfrak{X}$, the weak-star convergence of $\nu^{h,M_n,\epsilon}_\omega=\nu^{h,M_n,\epsilon}_{\omega,\cdot,\cdot}$ to $\nu^{h,\epsilon}=\nu^{h,\epsilon}_{\cdot,\cdot}$ as $n \rightarrow \infty$. This does not follow immediately, as a-priori there is \textit{no reason} why the subsequence of $M_n$ along which we have pointwise convergence should be \textit{independent} of the function $\gamma$ (a point not clarified in \cite{FKMT} or \cite{FMT}). To get around this we note that as $E_{\frac{p}{\alpha}}$ is separable (see \cite{KinPed2}), the space $L^{\alpha'}\!(Q_T;E_{\frac{p}{\alpha}})$ is also separable for our choice of $\alpha$. Therefore there is a countable dense subset $\{\gamma_i\}_{i=1}^\infty$. The desired weak-star convergence then follows from a diagonal argument.

Recall from the calculations performed prior to Lemma \ref{subsequenceconvergence} that, as $M_n \rightarrow \infty$ ($n \rightarrow \infty$),
\begin{align}\label{weak-r1}
\int_0^T\int_\Omega \frac{1}{M_n}\sum_{k=1}^{M_n}a(Du^{h,k,\epsilon}(\omega;t,x)) : D\psi(t,x){\dd}x{\dd}t\rightarrow\int_0^T\int_\Omega \chi^{h,\epsilon}(\omega;t,x) : D\psi(t,x){\dd}x{\dd}t
\end{align}
for every $\psi\in L^{\hat{q}}([0,T];W^{1,\hat{q}}_0(\Omega;\R{m}))$, and for all $\omega \in \mathfrak{X}$. Furthermore, by Lemma \ref{subsequenceconvergence}, there is a subsequence of $M_n$ such that, as $M_n \rightarrow \infty$ ($n \rightarrow \infty$),
\begin{align}\label{weak-r2}
\int_0^T\int_\Omega \frac{1}{M_n}\sum_{k=1}^{M_n}a(Du^{h,k,\epsilon}(\omega;t,x)) : D\psi(t,x){\dd}x{\dd}t\rightarrow\int_0^T\int_\Omega \langle \nu^{h,\epsilon}_{t,x},a\rangle : D\psi(t,x){\dd}x{\dd}t
\end{align}
for every $\psi\in L^{\alpha'}\!(0,T;W^{1,\alpha'}_0\!(\Omega;\R{m}))$, and for $P$-a.e. $\omega \in \mathfrak{X}$.

As both \eqref{weak-r1} and \eqref{weak-r2} hold for $\psi\in L^{\max\{\hat{q},\alpha'\}}(0,T;W^{1,\max\{\hat{q},\alpha'\}}_0(\Omega;\R{m}))$, by the uniqueness of the weak limit we can identify $\chi^{h,\epsilon}(\omega;\cdot,\cdot) =\langle\nu^{h,\epsilon},a\rangle$ for $P$-a.e. $\omega \in \mathfrak{X}$. As the right-hand side of this equality is independent of $\omega \in \mathfrak{X}$, it follows that $\chi^{h,\epsilon}$ is also independent of $\omega$, and we shall therefore suppress the dependence on $\omega$ in our notation and write $\chi^{h,\epsilon}(t,x)$ instead of $\chi^{h,\epsilon}(\omega;t,x)$, noting that $\chi^{h,\epsilon} = \langle\nu^{h,\epsilon},a\rangle \in L^{\hat{q}'}\!(Q_T;\R{m \times n})$. Thus, by combining \eqref{weak-r1} and \eqref{weak-r2}, which guarantee weak convergence in $L^{\hat{q}'}\!(0,T;W^{1,\hat{q}'}_0\!(\Omega;\R{m}))$ and $L^{\alpha}(0,T;W^{1,\alpha}_0(\Omega;\R{m}))$, respectively, for $P$-a.e. $\omega \in \mathfrak{X}$, and therefore, by uniqueness of the weak limit, also in the function space $L^{\max\{\hat{q}',\alpha\}}(0,T;W^{1,\max\{\hat{q}',\alpha\}}_0(\Omega;\R{m}))$, for $P$-a.e. $\omega \in \mathfrak{X}$, we have that
\begin{align}\label{weak-r3}
\int_0^T\int_\Omega \frac{1}{M_n}\sum_{k=1}^{M_n}a(Du^{h,k,\epsilon}(\omega;t,x)) : D\psi(t,x){\dd}x{\dd}t\rightarrow\int_0^T\int_\Omega \langle \nu^{h,\epsilon}_{t,x},a\rangle : D\psi(t,x){\dd}x{\dd}t
\end{align}
for all $\psi \in L^{\min\{\hat{q},\alpha'\}}(0,T;W^{1,\min\{\hat{q},\alpha'\}}_0(\Omega;\R{m}))$, for $P$-a.e. $\omega \in \mathfrak{X}$.

The terms involving the measures $\mu^{h,\epsilon}$ are treated completely analogously. Thus, by passing to the limit $M_n \rightarrow \infty$ ($n \rightarrow \infty$) in \eqref{newweak2}, we have that for all $\psi^h\in W^{1,1}_0(0,T;V^h_m)$ the following identity is satisfied:
\[\int_0^T\int_\Omega \langle\mu^{h,\epsilon}_{t,x},\xi\rangle\cdot(\partial_t\psi^h(t,x)-B^{\rm T}\psi^h(t,x))-\langle\nu^{h,\epsilon}_{t,x},a\rangle : D\psi^h(t,x)+F(t,x)\cdot\psi^h(t,x){\dd}x{\dd}t=0.
\]

\medskip

\noindent \textbf{{Step 2: Passing $h,\epsilon\rightarrow0_+$}}.
By Lemma \ref{subsequenceconvergence} and noting that $a \in E_{\frac{p}{\alpha}}$, it follows that $\langle \nu^{h,M_n,\epsilon}_{\omega,\cdot,\cdot},a \rangle \rightharpoonup \langle \nu^{h,\epsilon}_{\cdot,\cdot},a \rangle$
weakly in $L^\alpha(Q_T;\R{m\times n})$, for $\alpha \in (1,\frac{p}{q-1})$, and for $P$-a.e. $\omega \in \mathfrak{X}$. Thus, by weak lower-semicontinuity of the norm function, we have that $\|\langle \nu^{h,\epsilon}_{\cdot,\cdot},a\rangle\|_{L^{\alpha}(Q_T;\R{m\times n})}\leq \liminf_{n\rightarrow\infty}\|\langle \nu^{h,M_n,\epsilon}_{\omega,\cdot,\cdot},a\rangle\|_{L^{\alpha}(Q_T;\R{m\times n})}$ for $P$-a.e. $\omega \in \mathfrak{X}$.

Let $1<\alpha<\frac{p}{q-1}$ and consider the function $\tilde{a}(\xi)=1+|\xi|^{\frac{p}{\alpha}}$; then, $\tilde{a} \in E_{p/\alpha}$. It then follows from Lemma \ref{subsequenceconvergence} that $\langle \nu^{h,M_l,\epsilon}_{\omega,\cdot,\cdot},\tilde{a}\rangle\rightharpoonup \langle \nu^{h,\epsilon},\tilde{a}\rangle$ in $L^{\alpha}(Q_T;\R{m\times n})$ as $l\rightarrow\infty$ for $P$-a.e. $\omega \in \mathfrak{X}$. Then, by weak lower-semicontinuity of the norm function $\|\cdot\|_{L^{\alpha}(Q_T;\R{m\times n})}$, we have that
\begin{align}\label{e:nu-a-bound}
\begin{aligned}
\|\langle \nu^{h,\epsilon},\tilde{a}\rangle\|_{L^{\alpha}(Q_T;\R{m \times n})}&\leq \liminf_{l\rightarrow\infty}\|\langle \nu^{h,M_l,\epsilon}_{\omega,\cdot,\cdot},\tilde{a}\rangle\|_{L^{\alpha}(Q_T;\R{m \times n})}\\
& = \liminf_{l\rightarrow\infty}\left\|\frac{1}{M_l}\sum_{k=1}^{M_l} (1 + |Du^{h,k,\epsilon}(\omega;\cdot,\cdot)|^{\frac{p}{\alpha}}) \right\|_{L^{\alpha}(Q_T)}\\
&\leq \liminf_{l\rightarrow\infty}\frac{1}{M_l}\sum_{k=1}^{M_l} \left\|1 + |Du^{h,k,\epsilon}(\omega;\cdot,\cdot)|^{\frac{p}{\alpha}}\right\|_{L^{\alpha}(Q_T)}\\
& \leq \liminf_{l\rightarrow\infty} |Q_T|^{\frac{1}{\alpha}} + \frac{1}{M_l}\sum_{k=1}^{M_l}
\left\|Du^{h,k,\epsilon}(\omega;\cdot,\cdot)\right\|_{L^{p}(Q_T;\R{m\times n})}^{\frac{p}{\alpha}}\\
& \leq \liminf_{l\rightarrow\infty} |Q_T|^{\frac{1}{\alpha}} + \frac{1}{M_l}\sum_{k=1}^{M_l}\left[c\left(1+ \|u_0^{h,k,\epsilon}(\omega)\|_{L^2(\Omega;\R{m})}^2+\|F\|_{L^{p'}\!(Q_T;\R{m})}^{p'} \right)\right]^{\frac{1}{\alpha}}\\
& \leq \liminf_{l\rightarrow\infty} |Q_T|^{\frac{1}{\alpha}} + \frac{1}{M_l}\sum_{k=1}^{M_l}\left[c\left(1+ 2\|u_0^{h}\|_{L^2(\Omega;\R{m})}^2+ 2\epsilon^2 + \|F\|_{L^{p'}\!(Q_T;\R{m})}^{p'} \right)\right]^{\frac{1}{\alpha}}\\
& =  |Q_T|^{\frac{1}{\alpha}} + c \left(3+ 2\|u_0^{h}\|_{L^2(\Omega;\R{m})}^2 + \|F\|_{L^{p'}\!(Q_T;\R{m})}^{p'} \right)^{\frac{1}{\alpha}}
\leq C,\qquad \mbox{for $P$-a.e. $\omega \in \mathfrak{X}$,}
\end{aligned}
\end{align}
where $C$ is a positive constant, independent of $h \in (0,h_0]$ and $\epsilon \in (0,1]$. Here, in the transition from the left-hand side of the second inequality to its right-hand side we have used the triangle inequality; in the transition from the left-hand side of the third inequality to its right-hand side we have again used the triangle inequality;  in the transition from the left-hand side of the fourth inequality to its right-hand side we have used the energy estimate \eqref{e:kbound}, followed by recalling that $\|u_0^{h,k,\epsilon}(\omega)\|_{L^2(\Omega;\R{m})} \leq \|u_0^{h}\|_{L^2(\Omega;\R{m})} + \epsilon$, with $\epsilon \in (0,1]$, and noting that by the assumed strong convergence of $u_0^h$ to $u_0$ in $L^2(\Omega;\R{m})$ one can bound $\|u_0^{h}\|_{L^2(\Omega;\R{m})}$ by a constant, independent of $h \in (0,h_0]$.

Having shown that $\|\langle \nu^{h,\epsilon},\tilde{a}\rangle\|_{L^{\alpha}(Q_T)}$ is bounded by a constant,
independent of $h$ and $\epsilon$, we now compute
\begin{align*}
\|\nu^{h,\epsilon}\|_{L^{\alpha}(Q_T;E_{p/\alpha}')}&=\sup_{\|\gamma\|_{L^{\alpha'}\!(Q_T;E_{p/\alpha})}=1}\ \int_{Q_T}\langle \nu^{h,\epsilon}_{t,x},\gamma(t,x,\cdot)\rangle{\dd}x{\dd}t\\
&=\sup_{\|\gamma\|_{L^{\alpha'}\!(Q_T;E_{p/\alpha})}=1}\ \int_{Q_T}\left\langle \nu^{h,\epsilon}_{t,x},\frac{\gamma(t,x,\cdot)(1+|\cdot|^{\frac{p}{\alpha}})}{1+|\cdot|^{\frac{p}{\alpha}}}\right\rangle{\dd}x{\dd}t\\
&=\sup_{\|\gamma\|_{L^{\alpha'}\!(Q_T;E_{p/\alpha})}=1}\ \int_{Q_T}
\int_{A \in \mathbb{R}^{m \times n}} \frac{\gamma(t,x,A)(1+|A|^{\frac{p}{\alpha}})}{1+|A|^{\frac{p}{\alpha}}}\dd
\nu^{h,\epsilon}_{t,x}(A) {\dd}x{\dd}t\\
&\leq \sup_{\|\gamma\|_{L^{\alpha'}\!(Q_T;E_{p/\alpha})}=1} \int_{Q_T} \|\gamma(t,x,\cdot)\|_{E_{p/\alpha}}
|\langle \nu^{h,\epsilon}_{t,x},\tilde{a}\rangle| {\dd}x{\dd}t\\
&\leq \|\langle \nu^{h,\epsilon},\tilde{a}\rangle\|_{L^\alpha(Q_T)},
\end{align*}
which we can further bound from above by a positive constant $C$, independent of $h \in (0,h_0]$ and $\epsilon \in (0,1]$, using \eqref{e:nu-a-bound}. Hence, for each $\epsilon \in (0,1]$ there is a subsequence of the sequence of measures $\{\nu^{h,\epsilon}\}$ indexed by $h_j$ and a measure $\nu^\epsilon\in L^\alpha(Q_T;E_{p/\alpha}')$ such that for every $\gamma\in E_{p/\alpha}$ we have that
\[\langle \nu^{h_j,\epsilon},\gamma\rangle\rightharpoonup \langle \nu^\epsilon,\gamma\rangle\]
in $L^\alpha(Q_T;\R{m\times n})$ as $j\rightarrow\infty$. In particular this holds for $\gamma=a \in E_{p/\alpha}$, $1<\alpha<\frac{p}{q-1}$, that is:
\[\langle \nu^{h_j,\epsilon},a\rangle\rightharpoonup \langle \nu^\epsilon,a\rangle\]
in $L^\alpha(Q_T;\R{m\times n})$ for any such $\alpha>1$. Similar calculations hold for the family of measures $\mu^{h,\epsilon}$.

Taking the final limit in $\epsilon$ is now straightforward, and is done similarly to the above. In particular we note that we can in fact take a diagonal subsequence of $(h,\epsilon)$ and pass to the limit along this subsequence, obtaining the existence of a two measures $\mu$ and $\nu$ satisfying, for all $\psi \in W^{1,1}_0(0,T;W^{1,\alpha'}_0(\Omega;\R{m}))$ (contained in $W^{1,1}_0(0,T;W^{1,\min\{\hat{q},\alpha'\}}_0(\Omega;\R{m}))\subset L^{\min\{\hat{q},\alpha'\}}(0,T;W^{1,\min\{\hat{q},\alpha'\}}_0(\Omega;\R{m}))$):
\begin{align}\label{e:step2-end}
\int_0^T\int_\Omega \langle\mu_{t,x},\xi\rangle\cdot(\partial_t\psi(t,x)-B^{\rm T}\psi(t,x))-\langle\nu_{t,x},a\rangle : D\psi(t,x)+F(t,x)\cdot\psi(t,x){\dd}x{\dd}t=0.
\end{align}

\medskip

\noindent \textbf{{Step 3: Interpretation as a Young measure solution.}} Next, we discuss how the pair of measures $(\mu,\nu)$ satisfying \eqref{e:step2-end} can be can be interpreted as a Young measure solution. Recall that $\mu^{h,\epsilon}$ is the law of the random variable $u^h=u^{h,\epsilon}(\omega;t,x)$, and that, for each $\omega \in \mathfrak{X}$ and each $t \in [0,T]$, the function $u^{h,\epsilon}(\omega;t,\cdot) \in V^h_m \subset W^{1,\infty}_0(\Omega;\R{m})$ is a linear combination of finite element basis functions over the domain $\Omega$. Therefore,
\[x \in \Omega \mapsto \langle \mu^{h,\epsilon}_{t,x},\xi\rangle=\int_\mathfrak{X} u^{h,\epsilon}(\omega;t,x){\dd}P(\omega), \qquad t \in [0,T],\]
is weakly differentiable with respect to $x \in \Omega$, and thanks to the definition of weak derivative and Fubini's theorem,
\[D\langle \mu^{h,\epsilon}_{t,x},\xi\rangle=\int_\mathfrak{X}Du^{h,\epsilon}(\omega;t,x){\dd}P(\omega)=\langle\nu^{h,\epsilon}_{t,x},\xi\rangle,\qquad (t,x) \in (0,T] \times \Omega.\]
Using the weak-star convergence of $\mu^{h,\epsilon}$ and $\nu^{h,\epsilon}$ to, respectively, $\mu$ and $\nu$ established in Step 2, in conjunction with the regularity of $u^{h,\epsilon}$ coming from \eqref{approxreg1}--\eqref{approxreg3}, and the fact that $\mathfrak{X}$ is a space of finite $P$-measure, we see that by passing $h,\epsilon\rightarrow0_+$ along subsequences, we get the existence of a function
\[u\in L^\infty\left(\mathfrak{X};\bigtimes_{i=1}^mL^{p_i}(0,T;W_0^{1,p_i}(\Omega;\R{m}))\right)\]
such that, for $(t,x) \in (0,T] \times \Omega$,
\[\langle\mu_{t,x},\xi\rangle=\int_\mathfrak{X} u(\omega;t,x){\dd}P(\omega)\]
and
\[\langle\nu_{t,x},\xi\rangle=\int_\mathfrak{X} Du(\omega;t,x){\dd}P(\omega).\]
This comes from the fact that, for example,
\[\langle\mu^{h,\epsilon},\xi\rangle\rightharpoonup\langle\mu,\xi\rangle,\]
and we have that
\[\int_0^T\int_\Omega\int_\mathfrak{X}u^{h,\epsilon}(\omega;t,x)\cdot\phi(\omega;t,x){\dd}P(\omega){\dd}x{\dd}t
\rightarrow\int_0^T\int_\Omega\int_\mathfrak{X}u(\omega;t,x)\cdot\phi(\omega;t,x){\dd}P(\omega){\dd}x{\dd}t\]
for all $\phi\in L^1(\mathfrak{X}; L^\alpha(0,T; W_0^{1,\alpha}(\Omega;\R{m})))$. However, as $\mathfrak{X}$ has finite $P$-measure, we can choose test functions which are constant in the parameter $\omega$ to see that %
\[\int_0^T\int_\Omega\left(\int_\mathfrak{X}u^{h,\epsilon}(\omega;t,x){\dd}P(\omega)\right)\cdot\phi(t,x){\dd}x{\dd}t
\rightarrow\int_0^T\int_\Omega\left(\int_\mathfrak{X}u(\omega;t,x){\dd}P(\omega)\right)\cdot\phi(t,x){\dd}x{\dd}t\]
for all $\phi\in L^\alpha(0,T;W_0^{1,\alpha}(\Omega;\R{m}))$, which gives the desired representation for $\langle\mu_{t,x},\xi\rangle$ (the corresponding term for $\langle \nu_{t,x},\xi\rangle$ follows similarly).

Now we define the function
\[U(t,x):=\int_\mathfrak{X} u(\omega;t,x){\dd}P(\omega)=\langle\mu_{t,x},\xi\rangle.\]
The function $U$ is locally integrable over $Q_T$ and it therefore has a well-defined distributional derivative $DU$. If we can demonstrate that $DU(t,x)=\langle \nu_{t,x},\xi \rangle$ in a suitable sense, then we will have shown that the pairing $(U,\nu)$ is a Young measure solution of the system \eqref{modelprob1}--\eqref{modelconstants} under consideration. What must first be shown is that it makes sense to speak of $DU$ as a function, rather than as a distribution. To that end, note that the function $u \in L^p(0,T;W^{1,p}_0(\Omega;\R{m}))$ satisfies $u|_{\partial\Omega}=0$, and so we can extend the function $u$ (and hence the function $U$) by zero from $[0,T]\times\overline{\Omega}$ to the whole of $[0,T]\times\R{n}$ so that the extended
function (still denoted by $u$) belongs to $L^p(0,T;W^{1,p}(\R{n};\R{m}))$. We consider the difference quotient of $U$, defined by
\[D^\delta_i U=\frac{U(t,x+\delta e_i)-U(t,x)}{\delta},\]
where $e_i$ is the unit vector in the $i$-th co-ordinate direction. We then have that
\begin{align*}\|D^\delta_i U(t,\cdot)\|_{L^{p_i}(\Omega;\R{m})}&=\left\|\frac{U(t,\cdot+\delta e_i)-U(t,\cdot)}{\delta}\right\|_{L^{p_i}(\Omega;\R{m})}\\
&=\left\|\int_\mathfrak{X}\frac{u(\omega;t,\cdot+\delta e_i)-u(\omega;t,\cdot)}{\delta}{\dd}P(\omega)\right\|_{L^{p_i}(\Omega;\R{m})}\\
&\leq \int_\mathfrak{X}\left\|\frac{u(\omega;t,\cdot+\delta e_i)-u(\omega;t,\cdot)}{\delta}\right\|_{L^{p_i}(\Omega;\R{m})}{\dd}P(\omega)\\
&\leq \int_\mathfrak{X} \|D_iu(\omega;t,\cdot)\|_{L^{p_i}(\Omega;\R{m})}{\dd}P(\omega), \qquad \forall\,\delta \in (0,1), \quad  i=1,\dots,n.
\end{align*}
where in the transition to the last line we have used that, by Jensen's inequality,
\begin{align*}
&\left\|\frac{u(\omega;t,\cdot+\delta e_i)-u(\omega;t,\cdot)}{\delta}\right\|_{L^{p_i}(\Omega;\R{m})}^{p_i}
\!= \left\|\int_0^1 D_i u(\omega;t,\cdot+s\delta) \dd s\right\|^{p_i}_{L^{p_i}(\Omega;\R{m})} \!\leq \int_0^1 \left\| D_i u(\omega;t,\cdot+s\delta)\right\|^{p_i}_{L^{p_i}(\Omega;\R{m})} \dd s\\
&\qquad= \int_0^1 \int_\Omega |D_i u(\omega;t,x+s\delta)|^{p_i} \dd x \dd s \leq \int_0^1 \int_\Omega |D_i u(\omega;t,x)|^{p_i} \dd x \dd s = \|D_i u(\omega;t,\cdot)\|^{p_i}_{L^{p_i}(\Omega;\R{m})}.
\end{align*}
Therefore, by a standard characterization of Sobolev functions in terms of difference quotients, $D_iU(t,\cdot)\in L^{p_i}(\Omega;\R{m \times n})$, $i=1,\dots,n$, for almost every $t\in[0,T]$. By integrating over $t \in [0,T]$ in the above inequality it follows that $D_iU\in L^{p_i}(Q_T;\R{m \times n})$, $i=1,\dots,n$, and in particular $U \in L^p(0,T;W^{1,p}_0(\Omega;\R{m}))$, as required. By an analogous argument,
\[ U \in L^\infty(0,T;L^2(\Omega;\R{m})),\qquad \partial_t U \in L^{\hat{q}'}\!(0,T;W^{-1,\hat{q}'}\!(\Omega;\R{m})),\]
thanks to the regularity results
\[
u(\omega;\cdot,\cdot) \in L^\infty(0,T;L^2(\Omega;\R{m})),\qquad \partial_t u(\omega;\cdot,\cdot) \in L^{\hat{q}'}\!(0,T;W^{-1,\hat{q}'}\!(\Omega;\R{m})),\qquad \mbox{for a.e. $\omega \in \mathfrak{X}$,}\]
which follow from the regularity properties of $u^{h,\epsilon}$ stated in  \eqref{approxreg1}--\eqref{approxreg3} using the weak lower-semi-continuity of the norm function.

Next, we write
\[V(t,x):=\langle\nu_{t,x},\xi\rangle=\int_\mathfrak{X} Du(\omega;t,x){\dd}P(\omega),\]
and let $\varphi \in C^\infty_0(Q_T;\R{m \times n})$ be a test function. We then compute, using Fubini's Theorem, with $D = D_x$ and $\mbox{div} = \mbox{div}_x$,
\begin{align*}
\int_0^T\int_\Omega DU(t,x) : \varphi(t,x){\dd}x{\dd}t&=-\int_0^T\int_\Omega U(t,x) \cdot \mbox{div}\,\varphi(t,x){\dd}x{\dd}t\\
&=-\int_0^T\int_\Omega \int_\mathfrak{X} u(\omega;t,x) \cdot \mbox{div}\,\varphi(t,x){\dd}P(\omega){\dd}x{\dd}t\\
&=-\int_\mathfrak{X}\int_0^T\int_\Omega u(\omega;t,x) \cdot \mbox{div}\,\varphi(t,x){\dd}x{\dd}t{\dd}P(\omega)\\
&=\int_\mathfrak{X}\int_0^T\int_\Omega Du(\omega;t,x) : \varphi(t,x){\dd}x{\dd}t{\dd}P(\omega)\\
&=\int_0^T\int_\Omega\int_\mathfrak{X} Du(\omega;t,x) : \varphi(t,x){\dd}P(\omega){\dd}x{\dd}t\\
&=\int_0^T\int_\Omega V(t,x) : \varphi(t,x){\dd}x{\dd}t \qquad \forall\, \varphi \in C^\infty_0(Q_T;\R{m \times n}).
\end{align*}
Hence, $DU(t,x) = V$, and because $V:= \langle \nu, \xi \rangle$, it follows that $DU = \langle \nu, \xi \rangle$, as desired. Thus we have shown that the pair $(U,\nu)$ is a Young measure solution of the system \eqref{modelprob1}--\eqref{modelconstants} under consideration.

\medskip

\noindent \textbf{{Step 4: Attainment of the initial condition.}} As
\[\partial_tU^{h,\epsilon}\in L^{\hat{q}'}\!(0,T;W^{-1,\hat{q}'}\!(\Omega;\R{m})),\]
and
\[U^{h,\epsilon}\in \bigtimes_{i=1}^m L^{p_i}(0,T;W_0^{1,p_i}(\Omega;\R{m}))\cap L^\infty(0,T;L^2(\Omega;\R{m})),\]
it follows from Lemma \ref{ArzAscApp} that there is a subsequence $(h_j,\epsilon_j)$ of $(h,\epsilon)$ along which
\begin{align}\label{e:unicon}
\int_\Omega U^{h_j,\epsilon_j}(t,x)\cdot\varphi(x){\dd}x\rightarrow\int_\Omega U(t,x)\cdot\varphi(x){\dd}x
\end{align}
uniformly in $C([0,T])$ for all $\varphi\in L^2(\Omega;\R{m})$. Therefore to show that $U(0,\cdot) = u_0$ we let $\varphi\in L^2(\Omega;\R{m})$ and compute:
\begin{align}\begin{aligned}\label{e:1234}
\int_\Omega (U(0,x)-u_0(x))\cdot\varphi(x){\dd}x&=\int_\Omega (U(0,x)-U^{h_j,\epsilon_j}(0,x))\cdot\varphi(x){\dd}x\\
&\ \ \ \  + \int_\Omega (U^{h_j,\epsilon_j}(0,x)-U^{h_j,\epsilon_j}(t,x))\cdot\varphi(x){\dd}x\\
&\ \ \ \  + \int_\Omega \left(U^{h_j,\epsilon_j}(t,x)-\int_\mathfrak{X}u_0^{h_j,\epsilon_j}(\omega,x){\dd}P(\omega)\right)\cdot\varphi(x){\dd}x\\
&\ \ \ \  + \int_\Omega \left(\int_\mathfrak{X}u_0^{h_j,\epsilon_j}(\omega,x){\dd}P(\omega)-u_0(x)\right)\cdot\varphi(x){\dd}x\\
&=:I_j+II_j(t)+III_j(t)+IV_j.
\end{aligned}
\end{align}
The term $I_j$ converges to zero as $j\rightarrow\infty$ by the uniform convergence in $C([0,T])$ stated in \eqref{e:unicon}. The term $II_j(t)$ converges to zero as $t\rightarrow0_+$ as it holds that $u^{h_j,\epsilon_j}(\omega;t,x)\rightarrow u^{h_j,\epsilon_j}(\omega,0,x)$ by continuity of $t \in [0,T] \mapsto u^{h_j,\epsilon_j}(\omega;t,x)$, $\omega \in \mathfrak{X}$, $x \in \Omega$, and we can apply the Dominated Convergence Theorem to interchange this limit with the integral over $\mathfrak{X}$ that appears in the definition of $U^{h_j,\epsilon_j}$. The term $III_j(t)$ also converges to zero as $t\rightarrow0_+$ as we have that the numerical functions $u^{h_j,\epsilon_j}$ satisfy the initial condition, and we may once again apply the Dominated Convergence Theorem to interchange the limit with the integral over $\mathfrak{X}$. Finally for the term $IV_j$ we compute:
\begin{align*}
\left|\int_\mathfrak{X} u_0^{h_j,\epsilon_j}(\omega,x){\dd}P(\omega)-u_0(x)\right|
&=\left|\int_\mathfrak{X}u_0^{h_j}+\epsilon_j\upsilon^{h_j}(\omega;x)-u_0(x){\dd}P(\omega)\right|\\
&\leq |u_0^h(x)-u_0(x)|+\epsilon_j\int_\mathfrak{X}|\upsilon^{h_j}(\omega,x)|{\dd}P(\omega).
\end{align*}
Integrating this over $\Omega$ we get
\begin{align*}
\int_\Omega\left|\int_\mathfrak{X} u_0^{h_j,\epsilon_j}(\omega,x){\dd}P(\omega)-u_0(x)\right|{\dd}x&\leq \|u_0^{h_j}-u_0\|_{L^1(\Omega;\R{m})}+\epsilon_j \int_\mathfrak{X} \|\upsilon^{h_j}(\omega,\cdot)\|_{L^1(\Omega;\R{m})}{\dd}P(\omega)\\
&\leq \|u_0^{h_j}-u_0\|_{L^2(\Omega;\R{m})}+\epsilon_j |\Omega|^{\frac{1}{2}}\int_\mathfrak{X} \|\upsilon^{h_j}(\omega,\cdot)\|_{L^2(\Omega;\R{m})}{\dd}P(\omega)\\
&\leq \|u_0^{h_j}-u_0\|_{L^2(\Omega;\R{m})}+\epsilon_j |\Omega|^{\frac{1}{2}},
\end{align*}
which converges to zero as $j\rightarrow\infty$ by the assumed strong convergence of our discretized initial condition $u_0^h$ to $u_0$ in $L^2(\Omega;\R{m})$. In summary then,  $\lim_{j\rightarrow \infty} I_j =0$, $\lim_{t\rightarrow 0_+} II_j(t)=0$ for all $j \geq 1$,  $\lim_{t\rightarrow 0_+} III_j(t)=0$ for all $j \geq 1$, and  $\lim_{j\rightarrow \infty} IV_j =0$. Passing to the limit $t \rightarrow 0_+$ in \eqref{e:1234} with $j \geq 1$ kept fixed and then with $j \rightarrow \infty$ we deduce that
\[ \int_\Omega (U(0,x)-u_0(x))\cdot\varphi(x){\dd}x = 0 \qquad \forall\, \varphi \in L^2(\Omega;\R{m}).\]
Thus we have that $U(0,x)=u_0(x)$ for almost every $x\in\Omega$, showing that the initial condition is satisfied.

In summary, we have that that the pair $(U,\nu)$ is a Young measure solution to the model problem satisfying the initial
condition $U(0,\cdot) = u_0(\cdot)$. For clarity we state the following theorem which collects the most relevant properties
proved in Steps 1--4 above.

\begin{theorem}
Algorithms C and D converge to a Young measure solution of the problem \eqref{modelprob1}--\eqref{modelconstants} in the event that the initial Young measure is a Dirac mass, $\delta_{u_0(x)}$, concentrated on an initial datum $u_0\in L^2(\Omega;\R{m})$.
\end{theorem}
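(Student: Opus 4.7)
The plan is to take the discretization parameters to zero in a prescribed order, namely $\Delta t \to 0_+$, then $M \to \infty$, then $h,\epsilon \to 0_+$, and to identify the resulting limit object as a Young measure solution in the sense of Definition \ref{measuresol}. For each fixed realization $\omega \in \mathfrak{X}$ and each sample index $k=1,\ldots,M$, Proposition \ref{contmapping} furnishes a fully discrete solution $u^{h,k,\epsilon}_{\Delta t}(\omega;\cdot,\cdot)$ which depends continuously on the perturbed initial datum $u_0^{h,k,\epsilon}(\omega)$. Because there are only finitely many samples at this stage, a common subsequence $\Delta t_j \to 0_+$ (for fixed $\omega$, $h$, $\epsilon$) produces limiting functions $u^{h,k,\epsilon}(\omega;\cdot,\cdot)$ by replicating the analysis of Section \ref{sec:4}. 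Averaging the PDEs satisfied by the samples and integrating by parts in time yields the averaged identity \eqref{newweak2}.

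The heart of the argument is the $M \to \infty$ limit. Since $u_0^{h,1,\epsilon},\ldots,u_0^{h,M,\epsilon}$ are i.i.d.\ and the solution operator is continuous, Lemma \ref{iid} guarantees that $u^{h,k,\epsilon}(\omega;\cdot,\cdot)$ and $Du^{h,k,\epsilon}(\omega;\cdot,\cdot)$ are i.i.d.\ in $\omega$. A Monte-Carlo variance estimate, as in Lemma \ref{subsequenceconvergence}, combined with the uniform bounds \eqref{e:kbound}--\eqref{e:kbound-t}, shows that $\frac{1}{M_n}\sum_{k=1}^{M_n} \gamma(t,x,Du^{h,k,\epsilon}(\omega;\cdot,\cdot)) \to \langle \nu^{h,\epsilon}_{t,x},\gamma\rangle$ in $L^2(\mathfrak{X})$ for each $\gamma \in L^{\alpha'}(Q_T;E_{p/\alpha})$ with $\alpha \in (1,\tfrac{p}{q-1})$, whence along a subsequence the convergence is pointwise in $\omega$. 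The principal subtlety here, not fully addressed in \cite{FKMT,FMT}, is that a priori the subsequence depends on $\gamma$; I would resolve this by invoking the separability of $E_{p/\alpha}$ (Lemma \ref{propsofF}), hence of $L^{\alpha'}(Q_T;E_{p/\alpha})$, and extracting a diagonal subsequence that works simultaneously for a countable dense set of test functions. Uniqueness of weak limits, applied together with the $L^{\hat q'}$ bound \eqref{e:kbound-a}, then identifies the weak limit of the averaged fluxes as $\langle \nu^{h,\epsilon},a\rangle$ independently of $\omega$.

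For the passage $h,\epsilon \to 0_+$, I would bound $\|\langle\nu^{h,\epsilon},\tilde a\rangle\|_{L^\alpha(Q_T)}$ with $\tilde a(\xi)=1+|\xi|^{p/\alpha}$ by using the a.s.\ energy estimate together with weak lower semicontinuity, as in \eqref{e:nu-a-bound}, which gives a uniform bound on $\nu^{h,\epsilon}$ as an element of $L^\alpha(Q_T;E_{p/\alpha}')$. Extracting a diagonal subsequence $(h_j,\epsilon_j)\to (0_+,0_+)$ along which $\langle \nu^{h_j,\epsilon_j},a\rangle \rightharpoonup \langle \nu,a\rangle$ and analogously for $\mu$, I can pass to the limit in the averaged identity to obtain \eqref{e:step2-end} for all $\psi \in W^{1,1}_0(0,T;W^{1,\alpha'}_0(\Omega;\R{m}))$. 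Then I would define $U(t,x):=\langle \mu_{t,x},\xi\rangle$, verify via Fubini and the uniform bounds \eqref{approxreg1}--\eqref{approxreg3} that $U$ inherits the requisite Sobolev and $L^\infty(0,T;L^2)$ regularity, and show via a difference-quotient argument (commuting $D$ with the $\omega$-integral by Jensen's inequality) that $DU = \langle \nu,\xi\rangle$ as a distribution, so that the pair $(U,\nu)$ satisfies the Young measure PDE of Definition \ref{measuresol}.

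The remaining task is attainment of the initial condition. Using Lemma \ref{ArzAscApp} to extract a further subsequence along which $\int_\Omega U^{h_j,\epsilon_j}(t,\cdot)\cdot\varphi \,{\rm d}x \to \int_\Omega U(t,\cdot)\cdot\varphi \,{\rm d}x$ uniformly in $C([0,T])$, I would split the error $\int_\Omega(U(0,x)-u_0(x))\cdot\varphi(x)\,{\rm d}x$ into four terms as in \eqref{e:1234}: the uniform-in-$t$ convergence handles one, continuity in $t$ of each discrete sample combined with dominated convergence over $\mathfrak{X}$ handles the two middle terms, and $\|u_0^h - u_0\|_{L^2(\Omega;\R{m})}\to 0$ together with the bound $\|\upsilon^h(\omega;\cdot)\|_{L^2}\leq 1$ kills the perturbation $\epsilon_j \int_{\mathfrak{X}}\upsilon^{h_j}\,{\rm d}P$. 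The main obstacle I anticipate is the first one: the test-function-dependence of the subsequence in the $M\to\infty$ limit, which forces the diagonal/separability argument described above; everything else reduces to careful bookkeeping with uniform bounds and the Monte-Carlo error estimate.
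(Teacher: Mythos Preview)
Your proposal is correct and follows essentially the same route as the paper's own proof: the same order of limit passages ($\Delta t \to 0_+$, then $M \to \infty$, then $h,\epsilon \to 0_+$), the same Monte-Carlo variance argument of Lemma \ref{subsequenceconvergence} with the separability-and-diagonalization fix for the $\gamma$-dependence of the subsequence, the same $\tilde a(\xi)=1+|\xi|^{p/\alpha}$ bound to control $\nu^{h,\epsilon}$ in $L^\alpha(Q_T;E_{p/\alpha}')$, the same definition $U=\langle\mu,\xi\rangle$ with the difference-quotient/Jensen/Fubini verification that $DU=\langle\nu,\xi\rangle$, and the same four-term splitting \eqref{e:1234} for attainment of the initial condition. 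There is nothing to add.
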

\begin{rem}
If we have a (nonatomic) measure-valued initial datum $\sigma$, Steps 1--4 above provide an outline of an algorithm for proving the existence of a Young measure solution, assuming that all of the $M$  ``perturbations" to $\sigma$ can be chosen to be bounded, independent of $M$. This requires being able to find a random variable $u_0$ whose law is given by the initial Young measure $\sigma$. The existence of such a random variable is guaranteed by Proposition \ref{randvar}.
\end{rem}
\begin{rem} In \cite{FKMT} and \cite{FMT}, in their analysis of semidiscrete schemes the authors are able to interchange the order in which the limits of the spatial discretization parameter and the parameter $\epsilon$ tending to zero and $M\rightarrow \infty$ are taken. For us here the passages to the limits in $h\rightarrow 0_+$, $\epsilon\rightarrow 0_+$ and $\Delta t \rightarrow 0_+$ are all interchangeable, but the passage in $M$ is problematic from this point of view. The reason for this is that, while the limits obtained in the proof of Theorem \ref{subsequenceconvergence} are independent of $h$, computing these limits in the first place required us to use the independence and the ``identicalness" of the distributions of the functions $u^{h,k,\epsilon}$. This required that the solution operator $\textbf{S}_t$ was measurable. However, for the model problem under consideration here, we were only able to show continuity of this operator on the functions $u^{h,k,\epsilon}$ for each fixed $h$, and were unable to transfer this property to the limiting function as we pass $h\rightarrow0_+$, meaning that within the context of the algorithms, we \emph{must} take a limit passage in $M$ before the limit passage in $h$. If instead one could show that the operator $\textbf{S}_t$ was measurable when considered as a mapping from the nondiscretized initial condition $u_0\in L^2(\Omega;\R{m})$, we would then be able to interchange the limits in any way we desired to, as this property would be sufficient for the independence and ``identicalness" of the distributions to carry over to the limiting functions as we take $h\rightarrow0_+$.
\end{rem}

\noindent\textbf{Acknowledgement.} We are grateful to Professor Mike Cullen (UK Met Office, Exeter) for stimulating discussions and for suggesting to us the problem studied in this paper. This work was supported by the UK Engineering and Physical Sciences Research Council [EP/L015811/1].

\bibliographystyle{plain}
\bibliography{paperdraft}
\end{document}